\title[PMP under frequency constraints]{Discrete time Pontryagin maximum principle for optimal control problems under state-action-frequency constraints}
\author[P. Paruchuri]{Pradyumna Paruchuri}
\author[D. Chatterjee]{Debasish Chatterjee}
\thanks{The authors are with Systems \& Control Engineering, IIT Bombay, Powai, Mumbai 400076, India. They were supported in part by the grant 17ISROC001 from the Indian Space Research Organization. The authors thank Masaaki Nagahara for helpful discussions during the early phases of this work, and Navin Khaneja for suggesting the technique behind the second proof of our main result.}
\thanks{Emails: \textsf{pradyumn@sc.iitb.ac.in}, \textsf{dchatter@iitb.ac.in}}
\keywords{optimal control, Pontryagin maximum principle, frequency constraints}
\begin{document}

	\begin{abstract}
		We establish a Pontryagin maximum principle for discrete time optimal control problems under the following three types of constraints: a) constraints on the states pointwise in time, b) constraints on the control actions pointwise in time, and c) constraints on the frequency spectrum of the optimal control trajectories. While the first two types of constraints are already included in the existing versions of the Pontryagin maximum principle, it turns out that the third type of constraints cannot be recast in any of the standard forms of the existing results for the original control system. We provide two different proofs of our Pontryagin maximum principle in this article, and include several special cases fine-tuned to control-affine nonlinear and linear system models. In particular, for minimization of quadratic cost functions and linear time invariant control systems, we provide tight conditions under which the optimal controls under frequency constraints are either normal or abnormal.
	\end{abstract}

	\maketitle

	\section{Introduction}
		\label{sec:the intro}

As control engineers we encounter various types of constraints in control systems for a plethora of reasons: limitations on the magnitude of actuator outputs are almost omnipresent; bounds on the state variables of, e.g., robotic arms and chemical plants, should be ensured for safety considerations; satellites that image particular geographical areas of the earth must orient themselves and point at precise coordinates at pre-specified instants of time, etc. While constrained control problems are difficult in general, and this is evidenced by the fact that the literature on unconstrained control problems by far outweighs that on constrained problems, control synthesis techniques that account for all possible constraints are bootstrapped to result in greater accuracy due to increased awareness of the actuator limitations and foresight. The burgeoning demand for execution of precise control tasks necessitates the development of tools that permit the inclusion of such constraints at the synthesis stage, and in this respect, inclusion of control frequency constraints is a natural direction to pursue.

Optimal control theory provides us with a set of sophisticated and powerful tools to design controllers under an array of constraints, and also to boost performance by taking account of such constraints on the states and the control actions in time domain. These techniques typically rely on the assumption that the values attained by the candidate control functions can be changed arbitrarily quickly over time, but such an assumption rarely holds true in practice. In particular, inertial actuators such as robotic arms, rotating machines, etc., cannot faithfully execute control commands that demand very quick transitions between different control values. Such issues naturally lead to lacunae between the control commands received at the actuators and those that are faithfully executed, thereby contributing to loss of precision and the emergence of differences between desired and observed outputs.

This article addresses a class of optimal control problems that includes constraints on the frequency of admissible control functions in addition to state and control constraints. More specifically, we address optimal control problems for discrete-time nonlinear smooth control systems with the following three important classes of constraints:
\begin{enumerate}[label=(\Roman*), leftmargin=*, widest=III, align=left]
	\item \label{constr:states} constraints on the states at each time instant,
	\item \label{constr:controlmag} constraints on the control magnitudes at each time instant, and
	\item \label{constr:controlfreq} constraints on the frequency of the control functions.
\end{enumerate}
Constraints on the states (as in \ref{constr:states}) are desirable and/or necessary in most applications; the class of constraints treated here are capable of describing a general class of path-planning objectives, and subsumes both ballistic and servomechanism reachability problems. Constraints on the control magnitudes (as in \ref{constr:controlmag}) are typically simpler to deal with compared to state constraints; in particular, the two general techniques for synthesis of optimal controls, namely, dynamic programming and the maximum principle \cite{ref:Lib-12}, are capable of dealing with these constraints with relative ease.

Constraints on the control frequencies (as in \ref{constr:controlfreq}), in contrast to the other two types of constraints, are rarely encountered in the theory despite the fact that control theory started off with the so-called \emph{frequency-domain} techniques. A well-known and widely employed control strategy that treats frequency-domain properties of control functions is the so-called $H^\infty$ control \cite{ZDG}, 
but these techniques can neither treat pre-specified hard bounds on the frequency components in the control signals, nor are they capable of admitting state or control constraints at the synthesis stage. Frequency constraints can be indirectly addressed in $H^\infty$ control through penalization of appropriate $H^\infty$ norms, but such designs rely on heuristics and many trial-and-error steps. To the best of our knowledge, except for a US patent \cite{ScaBro95} where frequency constraints were imposed specifically to avoid a resonance mode in the arm of the read head positioner in a disk drive, there has been no systematic investigation into control with mixed frequency and time-domain constraints.

The celebrated Pontryagin maximum principle \cite{Boltyanskii}, a central tool in optimal control theory, provides first order necessary conditions for optimal controls. These necessary conditions, or equivalently, characterizations of optimal controls, serve to narrow the search space over which algorithms can play and extract optimal controls. The discrete time Pontryagin maximum principle was developed primarily by Boltyanskii (see \cite{ref:Bol-75, Boltyanskii} and the references therein), with several early refinements reported in \cite{ref:DubMil-65, ref:DzjPsh-75, ref:Dub-78}, and perhaps the most recent extensions appearing in \cite{ref:BouTre-16}; see \cite{ref:Psh-71} for a careful discussion about the differences between continuous and discrete time versions of the Pontryagin maximum principle. While these versions of the Pontryagin maximum principle are capable of handling constraints of the form \ref{constr:states} and \ref{constr:controlmag}, the new ingredient in this article is the set of frequency constraints \ref{constr:controlfreq}. We formulate frequency constraints on the control functions in terms of the \emph{active support set} --- the set on which the Fourier transform of the control function is allowed to take non-zero values. We engineer band-limited controls via appropriately defining the active sets; the constraints may be selected based on specific features or physics of the actuators, thereby ensuring faithful execution of the control commands. Our main result --- Theorem \ref{th:main pmp} in \secref{sec:main result} --- is a Pontryagin maximum principle for discrete-time nonlinear control systems with smooth data under all the three types of constraints \ref{constr:states}, \ref{constr:controlmag}, and \ref{constr:controlfreq}.

This maximum principle yields a well-defined two-point boundary value problem, which may serve as a starting point for algorithms such as shooting techniques that typically employ variants of Newton methods, to arrive at optimal control functions. If a solution of the two-point boundary value problem is found, feasibility of the original optimal control problem is automatically established. However, since the maximum principle provides (local) necessary conditions for optimality, not all solutions may achieve the minimum cost, and further analysis may be needed to select the cost-minimizing controls. A number of special cases of the main result, dealing with control-affine nonlinear systems, time-varying linear systems, etc., are provided in \secref{sec:corollaries}, and the important special case of optimal control of linear time-invariant control systems under quadratic costs and frequency constraints is treated in \secref{sec:LQR problems}. Two different proofs of Theorem \ref{th:main pmp} are provided in Appendix \ref{sec:proofs}, and Appendix \ref{sec:corollary proofs} contains the proofs of the various special cases. The necessary prerequisites for the proofs are reviewed in Appendices \ref{sec:cones}-\ref{sec:tents}.

\subsection*{Notation}
We employ standard notation: \(\Nz\) denotes the non-negative integers, \(\N\) the positive integers, \(\R\) the real numbers, and \(\C\) the complex numbers. We denote by \(\preceq\) the standard partial order on the set \(\R^{n}\) induced by the non-negative orthant: for \(a,\; b \in \R^{n}\), \(a \preceq b\) iff \(a_{i} \le b_{i}\) for every \(i = 1, \ldots, n\); we sometimes write \(b \succeq a\) to express the same statement. For us \(\ii \Let \sqrt{-1}\) is the unit complex number, \(I_n\) is the \(n\times n\) identity matrix. The vector space \(\R^{\genDim}\) is always assumed to be equipped with the standard inner product \(\inprod{v}{v'} \Let v \transp v'\) for every \(v, v' \in \R^{\genDim}\). In the theorem statements we use \(\dualSpace{\bigl(\R^{\genDim} \bigr)}\) to denote the dual space of \(\R^{\genDim}\) for the sake of precision; of course, \(\dualSpace{\bigl(\R^{\genDim}\bigr)}\) is isomorphic to \(\R^{\genDim}\) in view of the Riesz representation theorem. 

	\section{Problem Setup}
		\label{sec:opt prob}
		Consider a discrete time control system described by
\begin{equation}
\label{e:gen sys}
	\state_{t+1} = \sysDyn (\state_{t}, \conInp_{t}) \quad \text{for } t = 0, \ldots, \horizon-1,
\end{equation}
where \(\state_{t} \in \R^{\sysDim}\) and \(\conInp_{t} \in \R^{\conDim}\) and \( (\sysDyn)_{t=0}^{\horizon-1}\) is a family of maps such that \(\R^{\sysDim} \times \R^{\conDim} \ni (\xi, \mu) \mapsto \sysDyn[s](\xi, \mu) \in \R^{\sysDim}\) is continuously differentiable for each \(s = 0, \ldots, \horizon-1\).

Let \(\conInp \kth \Let (\conInp_{t} \kth)_{t = 0}^ {\horizon-1}\) denote the \(k^\text{th}\) control sequence, and \(\freqComp\) denote its discrete Fourier transform (DFT). The relationship between \(\freqComp\) and \(\conInp \kth\) is given by \cite[Chapter 7]{Stein-Shakarchi}:
\begin{equation}
\label{e:frequency components}
\begin{aligned}
	\freqComp \Let (\freqComp_{\xi})_{\xi=0}^{\horizon-1} = \biggl( \sum_{t = 0}^{\horizon-1} \conInp_{t}\kth \epower{-\ii 2\pi \xi t/\horizon} \biggr)_{\xi=0}^{\horizon-1} \quad & \text{for } \xi = 0, \ldots, \horizon-1 \\
	& \text{and } k = 1, \ldots, \conDim.
\end{aligned}
\end{equation}

In the context of \eqref{e:gen sys}, the objective of this article is to characterize solutions of the finite horizon constrained optimal control problem:
\begin{equation}
\label{e:abstract prob}
\begin{aligned}
	& \minimize_{(\conInp_{t})_{t=0}^{\horizon -1}} && \sum_{t=0}^{\horizon-1} \cost (\state_{t}, \conInp_{t})\\
	& \sbjto &&
	\begin{cases}
		\text{dynamics \eqref{e:gen sys}},\\
		\text{state constraints at each stage } t = 0, \ldots, \horizon,\\
		\text{control constraints at each stage } t = 0, \ldots, \horizon-1,\\
		\text{constraints on frequency components of the control sequence.}
	\end{cases}
\end{aligned}
\end{equation}
where \(\horizon \in \N\) is fixed, and \(\R^{\sysDim} \times \R^{\conDim} \ni (\xi, \mu)\mapsto \cost(\xi, \mu) \in \R\) is a continuously differentiable function representing the stage cost at time \(t\), and \(t = 0, \ldots, \horizon-1\).

The three classes of constraints considered in the optimal control problem \eqref{e:abstract prob} are as follows:
\begin{enumerate}[label=(\roman*), leftmargin=*, align=left, widest=iii]
	\item \emph{Control constraints}: \(\admControl_{t} \subset \R^{\conDim}\) is a given non-empty set for each \(t = 0, \ldots, \horizon\). We impose the constraints that the control action \(\conInp_{t}\) at stage \(t\) must lie in \(\admControl_{t}\):
	\begin{equation}
	\label{e:control constr.}
	\begin{aligned}
		\conInp_{t} \in \admControl_{t} \quad \text{for } t = 0, \ldots, \horizon-1.
	\end{aligned}
	\end{equation}

\item \emph{State constraints}: Let \(\admStates_{t} \subset \R^{\sysDim}\) be a given non-empty set for each \(t = 0, \ldots, \horizon\). We shall restrict the trajectory of the states \((\state_{t})_{t=0}^{\horizon}\) to the tube \( \admStates_{0} \times \admStates_{1} \times \cdots \times \admStates_{\horizon} \subset (\R^{\sysDim})^{\horizon+1}\); 
	\begin{equation}
	\label{e:state constr.}
		\state_{t} \in \admStates_{t} \quad \text{for } t = 0, \ldots, \horizon.
	\end{equation}

	\item \emph{Frequency constraints}: For a control sequence \(\conInp \kth\) we define \(\admFreq \kth \subset \C^{\horizon}\) to be the set of permissible frequency components \(\freqComp = (\freqComp_{\xi})_{\xi=0}^{\horizon-1}\). The set \(\admFreq \kth\) is constructed such that it allows non-zero components only in the selected frequencies. For a vector \(v \in \C^{\horizon}\) we define its support as
	\[
		\support(v) \Let \set[\big]{ i \in \{1, \ldots, \horizon \} \suchthat v_{i} \not = 0}.
	\]
	We stipulate that
	\begin{equation}
	\label{e:freq constr.}
		\freqComp \in \admFreq \kth \Let \set[\big]{ v \in \C^{\horizon} \suchthat \support(v) \subset W \kth},
	\end{equation}
	where \(W \kth \subset \{1, \ldots, \horizon \}\) represents the support for the selected frequencies in the \(k^\text{th}\) control sequence. The sets \(\bigl(W \kth \bigr)_{k=1}^{\conDim}\) are assumed to be given as part of the problem specification.
\end{enumerate}

The standard DFT relation in \eqref{e:frequency components} can be written in a compact form as:
\begin{equation}
\label{e:frequency matrix}
	\freqComp = \dft \conInp \kth \quad \text{for } k = 1, \ldots, \conDim,
\end{equation}
where
\[ \conInp \kth \Let \columnVector{\conInp \kth}{\horizon} \in \R^{\horizon}, \quad \freqComp \Let \columnVector{\freqComp}{\horizon} \in \C^{\horizon}, \text{ and} \]
\[ \dft \Let \frac{1}{\sqrt{\horizon}} \DFT{\omega}{\horizon} \in \C^{\horizon \times \horizon}, \]
and \(\omega \Let \epower{-\ii 2\pi /\horizon}\) is a primitive \(\horizon\)-th root of unity. In order to visualize the frequency components in all the control inputs, we represent the combined control profile in the following (stacked) fashion:
\begin{equation}
\label{e:combined profile}
	\fullConInp \Let \pmat{\conInp \kth[1] \\ \vdots \\ \conInp \kth[\conDim]} \in \R^{\conDim \horizon} \quad \text{and} \quad \fullFreqComp \Let \pmat{\freqComp[1] \\ \vdots \\ \freqComp[\conDim]} \in \C^{\conDim \horizon}.
\end{equation}
In terms of the representations \eqref{e:combined profile}, the relation \eqref{e:frequency matrix} can be written in a compact way as:
\begin{equation}
\label{e:full frequency profile}
	\fullFreqComp = \diagDFT \fullConInp, \quad \text{with } \diagDFT \Let \blkdiag (\dft, \ldots, \dft) \in \C^{\conDim \horizon \times \conDim \horizon}.
\end{equation}

Since \(\fullConInp\) is a vector with real entries, the real and imaginary parts of the frequency components can be separated by considering the real and imaginary parts in the matrix \(\diagDFT\) individually. To impose the given frequency constraints and yet work with real numbers only, we separate out the real and imaginary parts. We define a band-stop filter \(\bandStop \Let \stopFilter \fullFreqComp\), where \( \stopFilter \Let \blkdiag (\stopFilter \kth[1], \ldots, \stopFilter \kth[\conDim] ), \) with each of the \(\stopFilter \kth\) formed by the rows \(\mathbf{e}_{\xi}\) of \(I_{\horizon}\) for \(\xi \not \in W \kth \). The constraints \eqref{e:freq constr.} on the frequency components of the control now translate to:
\begin{equation}
\label{e:allowed frequencies}
	\stopFilter \fullFreqComp = 0 \quad 
	 \Leftrightarrow \quad \pmat{\stopFilter \diagDFT_{\text{real}} \\ \stopFilter \diagDFT_{\text{imag}}} \fullConInp = 0.
\end{equation}

Define \(\mathscr{F} \Let \pmat{\stopFilter \diagDFT_{\text{real}} \\ \stopFilter \diagDFT_{\text{imag}}}\) and let \(\linTran \in \R^{\conDim \horizon \times \conDim \horizon}\) denote the matrix that maps the vector \(\fullConInp\) to \(\pmat{\conInp_{0} & \ldots & \conInp_{\horizon-1}} \transp\):
\begin{equation}
\label{e:permutation}
	\linTran \pmat{\conInp \kth[1] \\ \vdots \\ \conInp \kth[\conDim]} = \pmat{\conInp_{0} \\ \vdots \\ \conInp_{\horizon-1}}
\end{equation}
Observe that \(\linTran\) is non-singular since the transformation representing \(\linTran\) is a permutation matrix, and in particular is a bijection. Then we can write the frequency constraints in \eqref{e:freq constr.} as
\[
	\mathscr{F} \linTran \inverse \pmat{\conInp_{0} \\ \vdots \\ \conInp_{\horizon-1}} = 0.
\]
Eliminating, if necessary, the zero rows of the matrix \(\mathscr{F}\), our constraint takes the form
\begin{equation}
\label{e:full freq constraints}
	\sum_{t=0}^{\horizon-1} \freqDer \conInp_{t} = 0.
\end{equation}
where \(\freqDer \in \R^{\freqDim \times \conDim}\) represents the corresponding columns of \(\mathscr{F} \linTran \inverse\) that multiply \(\conInp_{t}\). In other words, there exists a linear map \(\freqConstr : \conDim \horizon \lra \conDim \horizon\) that  describes the  constraints on the frequency spectrum of the control trajectory \((\conInp_{t})_{t=0}^{\horizon-1}\) as the following equality constraint:
\begin{equation}
\label{e:freq assumptions}
\freqConstr (\conInp_{0}, \ldots, \conInp_{\horizon-1}) = \sum_{t=0}^{\horizon-1} \freqDer \conInp_{t} = 0 \quad \text{for \(\bigl(\freqDer[t]\bigr)_{t=0}^{\horizon-1} \subset \R^{\freqDim \times \conDim \horizon}\) as in \eqref{e:full freq constraints}}.
\end{equation}
We shall refer to \(\freqConstr\) as our \embf{frequency constraint map}.

The abstract optimal control problem \eqref{e:abstract prob} can now be formally written as:
\begin{equation}
\label{e:opt prob}
\begin{aligned}
& \minimize_{(\conInp_{t})_{t=0}^{\horizon -1}} && \sum_{t=0}^{\horizon-1} \cost (\state_{t}, \conInp_{t})\\
	& \sbjto &&
	\begin{cases}
		\text{dynamics \eqref{e:gen sys}},\\
		\state_{t} \in \admStates_{t} \quad \text{for } t = 0, \ldots, \horizon,\\
		\conInp_{t} \in \admControl_{t} \quad \text{for } t = 0, \ldots, \horizon-1,\\
		\freqConstr  (\conInp_{0}, \ldots, \conInp_{\horizon-1}) = 0,
	\end{cases}
\end{aligned}
\end{equation}
with the following data:
\begin{enumerate}[label=(\ref{e:opt prob}-\alph*), leftmargin=*, align=left]
	\item \(\horizon \in \N\) is fixed;
	\item \(\R^{\sysDim} \times \R^{\conDim} \ni (\xi, \mu) \mapsto \cost(\xi, \mu) \in \R\) is a continuosly differentiable function for each \(t = 0, \ldots, \horizon-1\);
	\item \(\admStates_{t}\) is a subset of \(\R^{\sysDim}\) for each \(t\);
	\item \(\admControl_{t}\) is a subset of \(\R^{\conDim}\) for each \(t\);
	\item \(\freqConstr : \R^{\conDim \horizon} \lra \R^{\freqDim}\) is a given linear map on the control trajectory \(\conInp_{0}, \ldots, \conInp_{\horizon-1}\) for some \(\freqDim \in \N\).
\end{enumerate}

An optimal solution \((\optCon)_{t=0}^{\horizon-1}\) of \eqref{e:opt prob} is a sequence in \(\prod_{i=0}^{\horizon-1} \admControl_{i}\), and it generates its corresponding optimal state trajectory \((\optState)_{t=0}^{\horizon}\) according to \eqref{e:gen sys}. The pair \( \bigl( (\optState)_{t=0}^\horizon, (\optCon)_{t=0}^{\horizon-1} \bigr)\) is called an \embf{optimal state-action trajectory}.

\begin{remark}
	Constraints on the control frequencies cannot in general be translated into equivalent constraints on the control actions and/or the states of the original system. Had that been possible, the standard PMP would have sufficed. To see this negative assertion, consider the simple case that the system \eqref{e:gen sys} is linear and time-invariant, i.e., \(\sysDyn(\xi, \mu) = A\xi + B \mu\) for all \(t\) and for some fixed \(A \in \R^{\sysDim \times \sysDim}\) and \(B \in \R^{\sysDim \times \conDim}\). Assume further that the frequency constraint map \(\freqConstr\) is a bijection. Even then the constraint on the control actions \(\freqConstr (\conInp_{0}, \ldots, \conInp_{\horizon-1}) = 0\) cannot in general be transformed into equivalent constraints on the states of the form \((\state_{t})_{t=1}^{\horizon} \in \admStates \subset \R^{\sysDim \horizon}\). Indeed, when \(\horizon > \sysDim\), constraints on the control actions can only be contained in constraints of the form \((\state_{t})_{t=1}^{\horizon} \in \admStates \subset \R^{\sysDim \horizon}\) since the transformation from the control trajectory \((\conInp_{t})_{t=0}^{\horizon-1}\) to state trajectory \((\state_{t})_{t=1}^{\horizon}\) is not a bijection. A fresh investigation is, therefore, needed. The standard PMP \cite[Theorem 20]{ref:Bol-75} deals with constraints on the states and control actions that are expressed pointwise in time. Since constraints on the frequency components of the control, by definition, bring in dependence among the control actions at each time, the standard Hamiltonian maximization condition \cite[Theorem 20 (C)]{ref:Bol-75} cannot be used as is.
\end{remark}

	\section{Main Result}
		\label{sec:main result}

	The following theorem provides first order necessary conditions for optimal solutions of \eqref{e:opt prob}; it is the main result of this article.
	\begin{theorem}[Pontryagin maximum principle under state-action-frequency constraints]
	\label{th:main pmp}
		Let \(\bigl((\optState)_{t=0}^{\horizon}, (\optCon)_{t=0}^{\horizon-1} \bigr)\) be an optimal state-action trajectory for \eqref{e:opt prob} with \(\freqConstr\) as defined in \eqref{e:freq assumptions}. Define the Hamiltonian
		\begin{equation}
		\label{e:hamiltonian}
		\begin{aligned}
			& \R \times \dualSpace{\bigl(\R^{\freqDim} \bigr)} \times \dualSpace{\bigl(\R^{\sysDim}\bigr)} \times \Nz \times \R^{\sysDim} \times \R^{\conDim} \ni  (\genCost, \genFreq, \genDyn, \genTime, \genState, \genCon ) \mapsto\\
			& \qquad \hamiltonian[\genCost, \genFreq] ( \genDyn, \genTime, \genState, \genCon  ) \Let \inprod{\genDyn}{\sysDyn[\genTime](\genState, \genCon)} - \genCost \cost[\genTime](\genState, \genCon) - \inprod{\genFreq}{\freqDer[s] \genCon} \in \R.
		\end{aligned}
		\end{equation}
		Then 
		there exist
		\begin{itemize}[label=\(\circ\), leftmargin=*]
			\item a trajectory \(\bigl(\adjDyn \bigr)_{t=0}^{\horizon-1} \subset \dualSpace{\bigl(\R^{\sysDim} \bigr)}\),
			\item a sequence \(\bigl(\adjState \bigr)_{t=0}^{\horizon} \subset \dualSpace{\bigl(\R^{\sysDim}\bigr)}\), and
			\item a pair \(\bigl(\adjCost, \adjFreq \bigr) \in \R \times \dualSpace{\big(\R^{\freqDim}\bigr)}\),
		\end{itemize}
		satisfying the following conditions:
		\begin{enumerate}[label={\textup{(PMP-\roman*)}}, leftmargin=*, align=left, widest=iii]
			\item \label{pmp:non-negativity} non-negativity condition
				\begin{quote}
					\(\adjCost \ge 0;\)
				\end{quote}
			\item \label{pmp:non-triviality} non-triviality condition
				\begin{quote}
					the adjoint trajectory \(\bigl(\adjDyn\bigr)_{t=0}^{\horizon-1}\) and the pair \(\bigl(\adjCost, \adjFreq\bigr)\) do not simultaneously vanish;
				\end{quote}
			\item \label{pmp:optimal dynamics} state and adjoint system dynamics
				\begin{alignat*}{2}
					\optState[t+1] & = \derivative{\genDyn}{\hamiltonian[\adjCost, \adjFreq] \bigl(\adjDyn, t, \optState, \optCon \bigr)} \quad && \text{for } t = 0, \ldots, \horizon-1,\\
					\adjDyn[t-1] & = \derivative{\genState}{\hamiltonian[\adjCost, \adjFreq] \bigl(\adjDyn, t, \optState, \optCon \bigr)} - \adjState \quad && \text{for } t = 1, \ldots, \horizon-1,
				\end{alignat*}
				where \(\adjState\) lies in the dual cone of a tent \(\stateTent( \optState )\) of \(\admStates_{t}\) at \(\optState\);
			\item \label{pmp:transversality} transversality conditions
				\begin{align*}
					\derivative{\genState}{\hamiltonian[\adjCost, \adjFreq] \bigl(\adjDyn[0], 0, \optState[0], \optCon[0] \bigr)} - \adjState[0] = 0 \qquad \text{and} \qquad \adjDyn[\horizon-1] = -\adjState[\horizon],
				\end{align*}
				where \(\adjState[0]\) lies in the dual cone of a tent \(\stateTent[0] (\optState[0])\) of \(\admStates_{0}\) at \(\optState[0]\) and \(\adjState[\horizon]\) lies in the dual cone of a tent \(\stateTent[\horizon] (\optState[\horizon])\) of \(\admStates_{\horizon}\) at \(\optState[\horizon]\);
			\item \label{pmp:Hamiltonian VI} Hamiltonian maximization condition, pointwise in time,
				\begin{align*}
					\inprod{\derivative{\genCon}{\hamiltonian[\adjCost, \adjFreq] \bigl(\adjDyn, t, \optState, \optCon \bigr)}}{\perturb{\conInp}_{t}} \le 0 \quad \text{whenever } \optCon + \perturb{\conInp}_{t} \in \conTent ( \optCon ),
				\end{align*}
				where \(\conTent(\optCon)\) is a local tent at \(\optCon\) of the set \(\admControl_{t}\) of admissible actions;
			\item \label{pmp:freq} frequency constraints
				\begin{align*}
					\freqConstr \bigl(\optCon[0], \ldots, \optCon[\horizon-1]\bigr) = 0.
				\end{align*}
		\end{enumerate}
	\end{theorem}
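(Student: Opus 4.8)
The plan is to recast \eqref{e:opt prob} as a finite-dimensional constrained minimization over the joint state-and-control trajectory and then run the method of tents (Appendices \ref{sec:cones}--\ref{sec:tents}). As a preliminary step I would adjoin a running-cost accumulator \(\state^{0}_{t+1} = \state^{0}_{t} + \cost(\state_{t}, \conInp_{t})\) with \(\state^{0}_{0} = 0\), turning the objective in \eqref{e:opt prob} into the terminal coordinate \(\state^{0}_{\horizon}\). The decision variable becomes the pair \(z \Let \bigl((\state_{t})_{t=0}^{\horizon}, (\conInp_{t})_{t=0}^{\horizon-1}\bigr)\) (with the augmented coordinate included), and the feasible set is the intersection of four pieces: the graph of the dynamics \eqref{e:gen sys} (a system of equality constraints), the product \(\prod_{t=0}^{\horizon}\admStates_{t}\), the product \(\prod_{t=0}^{\horizon-1}\admControl_{t}\), and the linear subspace \(\ker\freqConstr\). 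Let \(z^{\star}\) be the point carrying the optimal trajectory \(\bigl((\optState), (\optCon)\bigr)\).

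Next, at \(z^{\star}\) I would attach a tent (a local conical approximation in the sense of Boltyanskii) to each of these four sets: to the dynamics, the tangent subspace obtained by linearizing \eqref{e:gen sys} along \(\bigl((\optState), (\optCon)\bigr)\), that is, the graph of the variational system \(\delta\state_{t+1} = \partial_{\state}\sysDyn[t](\optState, \optCon)\,\delta\state_{t} + \partial_{\conInp}\sysDyn[t](\optState, \optCon)\,\delta\conInp_{t}\); to the two product sets, the products of tents \(\stateTent(\optState)\) and \(\conTent(\optCon)\) of \(\admStates_{t}\) and \(\admControl_{t}\); to \(\ker\freqConstr\), the subspace itself (a linear subspace is its own tent). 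To the sublevel set \(\{\,\state^{0}_{\horizon} < (\state^{0}_{\horizon})^{\star}\,\}\) of the objective I attach the open half-space tent cut out by the functional \(z \mapsto \delta\state^{0}_{\horizon}\). Since \(z^{\star}\) is optimal, these tents fail to be in general position, so Boltyanskii's separation theorem for tents furnishes functionals --- one in the dual cone of each tent, not all simultaneously zero --- whose sum vanishes.

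It then remains to decode the multipliers. The functional dual to the linearized dynamics, pushed backward through the adjoint of the variational system, yields the adjoint trajectory \(\bigl(\adjDyn\bigr)_{t=0}^{\horizon-1}\) together with the costate recursion in \ref{pmp:optimal dynamics} and the boundary identities of \ref{pmp:transversality}; the components supported on \(\stateTent(\optState)\), \(\stateTent[0](\optState[0])\) and \(\stateTent[\horizon](\optState[\horizon])\) produce the sequence \(\bigl(\adjState\bigr)_{t=0}^{\horizon}\) in the respective dual cones; the multiplier of the objective's half-space tent is a scalar \(\adjCost \ge 0\), establishing \ref{pmp:non-negativity}; the multiplier dual to \(\ker\freqConstr\), which must lie in the range of \(\freqConstr\transp\), is precisely the constant covector \(\adjFreq\) that enters the Hamiltonian \eqref{e:hamiltonian} through the term \(-\inprod{\adjFreq}{\freqDer[s]\genCon}\); and the portion of the vanishing-sum relation carried by \(\prod_{t}\conTent(\optCon)\) decouples stage by stage into the variational inequality \ref{pmp:Hamiltonian VI}, where the \(\conInp\)-gradient of the Hamiltonian already contains the contribution \(-\freqDer[t]\transp\adjFreq\). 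The non-triviality assertion \ref{pmp:non-triviality} is exactly the nontriviality part of the separation theorem's conclusion, once one checks that a vanishing adjoint trajectory forces \((\adjCost, \adjFreq) \neq 0\); and \ref{pmp:freq} is just primal feasibility of \(z^{\star}\).

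The step I expect to be the main obstacle is the one introduced by the frequency constraint itself: because \(\freqConstr(\conInp_{0}, \ldots, \conInp_{\horizon-1}) = 0\) couples the control actions across all time instants, the feasible set loses the product-over-\(t\) structure that ordinarily delivers the \emph{pointwise-in-time} Hamiltonian condition, exactly as flagged in the Remark above. The way around this is to refuse to absorb the frequency constraint into the per-stage sets \(\admControl_{t}\) and to keep it instead as a single global linear constraint in \(\R^{\conDim\horizon}\); only \(\prod_{t}\admStates_{t}\) and \(\prod_{t}\admControl_{t}\) carry product structure, so the separation argument still decouples across \(t\) for the state and control tents while generating exactly one additional, \emph{time-independent} multiplier \(\adjFreq\). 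An alternative route, bypassing the tent machinery, is to adjoin a second accumulator \(\zeta_{t+1} = \zeta_{t} + \freqDer[t]\conInp_{t}\) with \(\zeta_{0} = 0\) and terminal constraint \(\zeta_{\horizon} = 0\), apply a classical discrete-time maximum principle such as \cite[Theorem 20]{ref:Bol-75} to the augmented system, and note that, since neither the dynamics nor the cost depend on \(\zeta\), its adjoint variable is conserved over the horizon and equals, up to sign, the sought multiplier \(\adjFreq\), with the terminal transversality condition for the \(\zeta\)-block automatically encoding \ref{pmp:freq}.
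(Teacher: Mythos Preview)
Your proposal is correct and mirrors the paper's own two proofs almost exactly: your main route via the method of tents on the joint state--control variable is the paper's first proof in \S\ref{sec:version 1} (the paper lifts the cost directly to a half-space tent rather than adjoining a running-cost accumulator, but this is immaterial), and your alternative of adjoining an accumulator \(\zeta_{t+1} = \zeta_{t} + \freqDer[t]\conInp_{t}\) with \(\zeta_{0} = \zeta_{\horizon} = 0\) and observing that its adjoint is constant is precisely the paper's second proof in \S\ref{sec:version 2}. The only point you gloss over is the non-triviality step \ref{pmp:non-triviality}: in the tent proof the paper shows (Proposition~\ref{p:sys insep}, via Theorem~\ref{th:inseparability}) that the lifted state and control tents form an inseparable family, so that if \(\adjCost\), \(\bigl(\adjDyn\bigr)_{t}\), and \(\adjFreq\) all vanished, the surviving dual vectors in \(\dualCone{\bigl(\fullStateTent\bigr)}\) and \(\dualCone{\bigl(\fullConTent\bigr)}\) would sum to zero and contradict inseparability---this is the ``check'' you allude to but do not spell out.
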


	We present a complete proof of Theorem \ref{th:main pmp} in \appref{sec:proofs}. The rest of this section is devoted to a scrutiny of various facets of Theorem \ref{th:main pmp} over a sequence of remarks, and providing a set of corollaries catering to various special cases.

	\begin{remark}
		\label{rm:extremals}
		It is readily observed that since the scalar \(\adjCost\) and the vectors \(\adjDyn, \adjState, \adjFreq\) enter linearly in the Hamiltonian function \(\hamiltonian\), the non-negativity condition \ref{pmp:non-negativity} on \(\adjCost\) can be equivalently posed as the condition that \(\adjCost \in \set{0, 1}\). A quintuple \(\Bigl(\adjCost, \adjFreq, \bigl(\adjDyn\bigr)_{t=0}^{\horizon-1}, (\optState)_{t=0}^{\horizon}, (\optCon)_{t=0}^{\horizon-1}\Bigr)\) that satisfies the PMP is called an \embf{extremal lift} of the optimal state-action trajectory \(\bigl((\optState)_{t=0}^{\horizon}, (\optCon)_{t=0}^{\horizon-1}\bigr)\). Extremal lifts with \(\adjCost = 1\) are called \embf{normal} extremals and the ones with \(\adjCost = 0\) are caled \embf{abnormal} extremals
	\end{remark}

	\begin{remark}
		The term \(\inprod{\adjFreq}{\freqDer \genCon}\) in the Hamiltonian \(\hamiltonian\) is an additional term compared to the usual Hamiltonian formulation and corresponds to the constraints on the frequency components of the control sequence. Observe that since this term does not enter the conditions \ref{pmp:non-negativity}, \ref{pmp:optimal dynamics} and \ref{pmp:transversality}, the state and adjoint dynamics are unaffected. The element \(\adjFreq\) is a new entity in Theorem \ref{th:main pmp} compared toth e classical PMP in \cite{Boltyanskii}.
	\end{remark}

	\begin{remark}
		From the definition of the Hamiltonian function \(\hamiltonian\) we see that
		\[
			\derivative{\genDyn}{\hamiltonian[\adjCost, \adjFreq] \bigl(\adjDyn, t, \optState, \optCon\bigr)} = \sysDyn (\optState, \optCon).
		\]
		In other words, the state dynamics prescribed in \ref{pmp:optimal dynamics} simply states that the optimal state-action trajectory \(\bigl((\optState)_{t=0}^{\horizon}, (\optCon)_{t=0}^{\horizon-1}\bigr)\) satisfies the system dynamics \eqref{e:gen sys}.
	\end{remark}

	\begin{remark}
		The tents \(\stateTent(\optState)\) and \(\conTent(\optCon)\) mentioned in \ref{pmp:optimal dynamics} and \ref{pmp:Hamiltonian VI} are linear approximations of the sets \(\admStates_{t}\) and \(\admControl_{t}\) locally at \(\optState\) and \(\optCon\) respectively. Precise definitions of these tents will be given in \appref{sec:tents}. Intuitively, a tent (to a set at a point) consists of a set of directions along which it is possible to enter the set from that point. By construction a tent to a set at a point is a convex cone. The dual cone of a cone \(C\) is the convex cone that consists of all the directions along which one can most efficiently exit/leave the cone \(C\). The vectors \(\adjState\) lying in dual cones of a tent \(\stateTent(\optState)\) of \(\admStates_{t}\) at \(\optState\) represent the directions along which one can leave the set \(\admStates_{t}\) most efficiently from \(\optState\). A detailed exposition of dual cones and tents is given in \appref{sec:cones} and \appref{sec:tents} respectively.
	\end{remark}

	\begin{remark}
		In simple terms, the condition \ref{pmp:Hamiltonian VI} means that along the directions entering the set \(\admControl_{t}\) from \(\optCon\), the Hamiltonian \(\hamiltonian\) does not increase locally. We have used the name "Hamiltonian maximization condition" for this condition; although not entirely apt, it is borrowed from the continuous time counterpart of the Pontryagin maximum principle where the optimal control at time \(t\) maximizes the Hamiltonian at that instant \(t\) over the admissible action set. At the level of generality of Theorem \ref{th:main pmp}, an actual Hamiltonian maximization does not hold. However, such a maximization condition does indeed materialize under additional structural assumptions on the sets of admissible actions, as described in Corollary \ref{co:con-affine pmp}.
	\end{remark}

	\begin{remark}
		The conditions \ref{pmp:non-negativity} - \ref{pmp:freq} together constitute a well-defined two point boundary value problem with \ref{pmp:transversality} giving the entire set of boundary conditions. Newton-lie methods may be employed to solve this (algebraic) two point boundary value problem; see, eg., \cite[\S 2.4]{Trelat} for an illuminating discussion in the context of continuous-time problems. Solution techniques for two point boundary value problems is an active active field of research.
	\end{remark}

	\begin{remark}
	\label{rm:uncertainty}
		Uncertainty principles in time-frequency analysis impose fundamental restrictions on the classes of control magnitude and frequency constraints. For instance, the Donoho-Stark uncertainty principle \cite{ref:DonSta-89} shows that every non-zero \(\C\)-valued function \(g : \set[\big]{0, 1, \ldots, N-1} \mapsto \C\) must satisfy \(\support(g) + \support(\hat{g}) \ge 2 \sqrt{\abs{N}}\).\footnote{Further refinements due to Biro-Meshulam-Tao may be found in \cite{ref:Tao-05}; see also \cite{ref:MurWha-12} for a recent generalization.} Applied to the control trajectories \(\bigl(\conInp \kth\bigr)_{k=1}^{\conDim}\), one immediately finds that imposing certain types of control magnitude and frequency constraints simultaneously may lead to empty feasible sets of controls irrespective of the dynamics and other constraints. In other words, sufficient care needs  to be excercised to ensure a well-posed optimal control problem.
	\end{remark}

		\label{sec:corollaries}
		We now describe a few special cases of Theorem \ref{th:main pmp} that are fine-tuned to specific classes of control systems.

Consider a discrete-time control-affine system described by:
\begin{equation}
\label{e:con-affine}
	\state_{t+1} = \affSysDyn(\state_{t}) + \affSysCon(\state_{t}) \ \conInp_{t} \quad \text{for } t = 0, \ldots, \horizon-1,
\end{equation}
where \(\state_{t} \in \R^{\sysDim}\) and \(\conInp_{t} \in \R^{\conDim}\), and \((\affSysDyn)_{t=0}^{\horizon-1}\) and \((\affSysCon)_{t=0}^{\horizon-1}\) are two families of maps such that \(\R^{\sysDim} \ni \genState \mapsto\affSysDyn[s] (\genState) \in \R^{\sysDim}\) and \(\R^{\sysDim} \ni \genState \mapsto\affSysCon[s] (\genState) \in \R^{\sysDim \times \conDim}\) are continuously differentiable for each \(s = 0, \ldots, \horizon-1\). Consider the optimal control problem \eqref{e:opt prob} with the dynamics given by \eqref{e:con-affine}:
\begin{equation}
\label{e:aff opt prob}
\begin{aligned}
	& \minimize_{ (\conInp_{t} )_{t=0}^{\horizon -1}} && \sum_{t=0}^{\horizon-1} \affCost (\state_{t}, \conInp_{t} )\\
	& \sbjto &&
	\begin{cases}
		\text{dynamics \eqref{e:con-affine}},\\
		\state_{t} \in \admStates_{t} \quad \text{for } t = 0, \ldots, \horizon,\\
		\conInp_{t} \in \admControl_{t} \quad \text{for } t = 0, \ldots, \horizon-1,\\
		\freqConstr  (\conInp_{0}, \ldots, \conInp_{\horizon-1} ) = 0,\\
		\affCost (\genState, \cdot) : \admControl_{t} \ra \R \text{ is convex whenever }\genState \in \R^{\sysDim}, t=0, \ldots, \horizon-1,\\
		\admControl_{t} \text{ convex, compact, and non-empty for each }t = 0, \ldots, \horizon-1.
	\end{cases}
\end{aligned}
\end{equation}

\begin{corollary}[PMP for control-affine systems]
\label{co:con-affine pmp}
	Let \(\bigl((\optState)_{t=0}^{\horizon}, (\optCon)_{t=0}^{\horizon-1} \bigr)\) be an optimal state-action trajectory for \eqref{e:aff opt prob} with \(\freqConstr\) as defined in \eqref{e:freq assumptions}. Define the Hamiltonian
	\begin{equation}
	\label{e:aff ham}
	\begin{aligned}
		& \R \times \dualSpace{\bigl(\R^{\freqDim}\bigr)} \times \dualSpace{\bigl(\R^{\sysDim}\bigr)} \times \Nz \times \R^{\sysDim} \times \R^{\conDim} \ni  (\genCost, \genFreq, \genDyn, \genTime, \genState, \genCon ) \mapsto\\
		& \qquad \hamiltonian[\genCost, \genFreq] ( \genDyn, \genTime, \genState, \genCon  ) \Let \inprod{\genDyn}{\affSysDyn[\genTime](\genState) + \affSysCon[\genTime](\genState) \ \genCon} - \genCost \cost[\genTime](\genState, \genCon) - \inprod{\genFreq}{\freqDer[s] \genCon} \in \R.
	\end{aligned}
	\end{equation}
	Then there exist
	\begin{itemize}[label=\(\circ\), leftmargin=*]
		\item a trajectory \( \bigl(\adjDyn \bigr)_{t=0}^{\horizon-1} \subset \dualSpace{\bigl(\R^{\sysDim}\bigr)}\),
		\item a sequence \(\bigl(\adjState\bigr)_{t=0}^{\horizon} \subset \dualSpace{\bigl(\R^{\sysDim}\bigr)}\), and
		\item a pair \(\bigl(\adjCost, \adjFreq \bigr) \in \R \times \dualSpace{\bigl(\R^{\freqDim}\bigr)}\),
	\end{itemize}
	satisfying the following conditions:
	\begin{enumerate}[label={\textup{(AFF-\roman*)}}, leftmargin=*, align=left, widest=iii]
		\item \label{aff:non-negativity} non-negativity condition
			\begin{quote}
				\(\adjCost \ge 0;\)
			\end{quote}
		\item \label{aff:non-triviality} non-triviality condition
			\begin{quote}
				the adjoint trajectory \(\bigl(\adjDyn\bigr)_{t=0}^{\horizon-1}\) and the pair \(\bigl(\adjCost, \adjFreq\bigr)\) do not simultaneously vanish;
			\end{quote}
		\item \label{aff:optimal dynamics} state and adjoint system dynamics
			\begin{alignat*}{2}
				\optState[t+1] & = \derivative{\genDyn}{\hamiltonian[\adjCost, \adjFreq] \bigl(\adjDyn, t, \optState, \optCon \bigr)} \quad && \text{for } t = 0, \ldots, \horizon-1,\\
				\adjDyn[t-1] & = \derivative{\genState}{\hamiltonian[\adjCost, \adjFreq] \bigl(\adjDyn, t, \optState, \optCon \bigr)} - \adjState \quad && \text{for } t = 1, \ldots, \horizon-1,
			\end{alignat*}
			where \(\adjState \in \dualSpace{\bigl(\R^{\sysDim}\bigr)}\) lies in the dual cone of a tent \(\stateTent(\optState)\) of \(\admStates_{t}\) at \(\optState\);
		\item \label{aff:transversality} transversality conditions
			\begin{align*}
				\derivative{\genState}{\hamiltonian[\adjCost, \adjFreq] \bigl(\adjDyn[0], 0, \optState[0], \optCon[0] \bigr)} - \adjState[0] = 0 \qquad \text{and} \qquad \adjDyn[\horizon-1] = -\adjState[\horizon],
			\end{align*}
			where \(\adjState[0]\) lies in the dual cone of a tent \(\stateTent[0] (\optState[0])\) of \(\admStates_{0}\) at \(\optState[0]\) and \(\adjState[\horizon]\) lies in the dual cone of a tent \(\stateTent[\horizon] (\optState[\horizon])\) of \(\admStates_{\horizon}\) at \(\optState[\horizon]\);

		\item \label{aff:Hamiltonian VI} Hamiltonian maximization condition, pointwise in time,
			\begin{align*}
				\hamiltonian \bigl(\adjDyn, t, \optState, \optCon \bigr) = \max_{\genCon \in \admControl_{t}} \hamiltonian \bigl(\adjDyn, t, \optState, \genCon \bigr) \quad \text{for } t = 0, \ldots, \horizon-1;
			\end{align*}
		\item \label{aff:freq} frequency constraints
			\begin{align*}
				\freqConstr \bigl(\optCon[0], \ldots, \optCon[\horizon-1] \bigr) = 0.
			\end{align*}
	\end{enumerate}
\end{corollary}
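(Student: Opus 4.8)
The plan is to obtain Corollary \ref{co:con-affine pmp} as a direct specialization of Theorem \ref{th:main pmp}, the only genuine work being the upgrade of the variational inequality \ref{pmp:Hamiltonian VI} to the pointwise Hamiltonian maximization \ref{aff:Hamiltonian VI}. First I would apply Theorem \ref{th:main pmp} to the optimal control problem \eqref{e:aff opt prob}, which is precisely \eqref{e:opt prob} with the dynamics specialized to \(\sysDyn(\genState, \genCon) = \affSysDyn(\genState) + \affSysCon(\genState)\,\genCon\) and the stage cost to \(\affCost\); since \(\affSysDyn\), \(\affSysCon\), and \(\affCost\) are continuously differentiable, so is \((\genState, \genCon) \mapsto \affSysDyn(\genState) + \affSysCon(\genState)\,\genCon\), and the hypotheses of Theorem \ref{th:main pmp} are met. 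The Hamiltonian \eqref{e:hamiltonian} then reduces verbatim to \eqref{e:aff ham}, and the conditions \ref{aff:non-negativity}, \ref{aff:non-triviality}, \ref{aff:optimal dynamics}, \ref{aff:transversality}, and \ref{aff:freq} are immediate restatements of \ref{pmp:non-negativity}, \ref{pmp:non-triviality}, \ref{pmp:optimal dynamics}, \ref{pmp:transversality}, and \ref{pmp:freq} for this Hamiltonian.

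To obtain \ref{aff:Hamiltonian VI}, I would fix \(t \in \{0, \ldots, \horizon-1\}\), freeze \(\adjCost, \adjFreq, \adjDyn, \optState\), and first observe from \eqref{e:aff ham} that \(\genCon \mapsto \hamiltonian[\adjCost, \adjFreq](\adjDyn, t, \optState, \genCon)\) is concave on \(\R^{\conDim}\): the term contributed by the dynamics and the term contributed by the frequency constraint are both affine in \(\genCon\), while \(-\adjCost\, \affCost(\optState, \cdot)\) is concave because \(\affCost(\optState, \cdot)\) is convex and \(\adjCost \ge 0\) by \ref{aff:non-negativity}; being continuous, this map attains its maximum over the compact set \(\admControl_{t}\). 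Next I would use the convexity of \(\admControl_{t}\): for every \(\genCon \in \admControl_{t}\) the segment \([\optCon, \genCon]\) lies in \(\admControl_{t}\), so the direction \(\perturb{\conInp}_{t} \Let \genCon - \optCon\) is admissible --- a local tent \(\conTent(\optCon)\) of the convex set \(\admControl_{t}\) at \(\optCon\) may be taken to contain all of \(\admControl_{t}\) (see \appref{sec:tents}), so that \(\optCon + \perturb{\conInp}_{t} = \genCon \in \conTent(\optCon)\). Then \ref{pmp:Hamiltonian VI} yields \(\inprod{\derivative{\genCon}{\hamiltonian[\adjCost, \adjFreq]\bigl(\adjDyn, t, \optState, \optCon\bigr)}}{\genCon - \optCon} \le 0\) for every \(\genCon \in \admControl_{t}\), which is exactly the first-order optimality condition for \(\optCon\) to maximize the concave map \(\hamiltonian[\adjCost, \adjFreq](\adjDyn, t, \optState, \cdot)\) over the convex set \(\admControl_{t}\); since that map is concave, the condition is also sufficient, so \(\optCon\) is a global maximizer and \ref{aff:Hamiltonian VI} follows. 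As \(t\) was arbitrary, this finishes the argument.

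The step I expect to be the main obstacle is the tent bookkeeping above: \ref{pmp:Hamiltonian VI} is stated only for some local tent \(\conTent(\optCon)\), which a priori could be a proper subcone of the cone of feasible directions of \(\admControl_{t}\) at \(\optCon\), so I must verify that the resulting inequality genuinely holds for every perturbation \(\genCon - \optCon\) with \(\genCon \in \admControl_{t}\). For convex \(\admControl_{t}\) this is harmless --- the closed cone generated by \(\admControl_{t} - \optCon\) is itself an admissible tent, a fact to be extracted from \appref{sec:tents} --- but it is exactly the place where the structural hypotheses added in \eqref{e:aff opt prob} (convexity and compactness of \(\admControl_{t}\), convexity of \(\affCost(\genState, \cdot)\)) enter, and it is worth spelling out; everything else is routine convex analysis.
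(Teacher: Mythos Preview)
Your proposal is correct and matches the paper's proof essentially step for step: apply Theorem \ref{th:main pmp} to obtain \ref{aff:non-negativity}--\ref{aff:transversality} and \ref{aff:freq} directly, then upgrade \ref{pmp:Hamiltonian VI} to \ref{aff:Hamiltonian VI} by taking the supporting cone of the convex set \(\admControl_{t}\) at \(\optCon\) as the local tent (this is Theorem \ref{th:supporting cone tent} in \appref{sec:tents}, which resolves exactly the ``tent bookkeeping'' obstacle you flag), and then using concavity of \(\genCon \mapsto \hamiltonian[\adjCost,\adjFreq](\adjDyn,t,\optState,\genCon)\) together with compactness of \(\admControl_{t}\) to promote the first-order inequality to a genuine maximum. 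The only cosmetic difference is that the paper checks concavity via the Hessian of \(\affCost(\optState,\cdot)\) while you argue it structurally (affine plus \(-\adjCost\) times convex); both are the same observation.
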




\begin{corollary}
\label{co:linear systems}
	Let \(\bigl((\optState)_{t=0}^{\horizon}, (\optCon)_{t=0}^{\horizon-1} \bigr)\) be an optimal state-action trajectory for \eqref{e:aff opt prob} with \(\freqConstr\) as defined in \eqref{e:freq assumptions}. Moreover, suppose that in the optimal control problem \eqref{e:aff opt prob}, the underlying system is linear, state constraints are absent and the end points \(\state_{0}\) and \(\state_{\horizon}\) are fixed; i.e.,
	\begin{equation}
	\label{e:linear sys}
		\sysDyn (\genState, \genCon) = \sys_{t} \genState + \control_{t} \genCon,
	\end{equation}
	and
	\begin{align*}
		& \admStates_{0} \Let \set[\big]{\state_{\rm{ini}}}, \quad \admStates_{\horizon} \Let \set[\big]{\state_{\rm{fin}}},\\
		& \admStates_{t} \Let \R^{\sysDim} \quad \text{for } t = 1, \ldots, \horizon-1.
	\end{align*}
	With the Hamiltonian as defined in \eqref{e:aff ham}, the conditions \ref{aff:non-negativity}, \ref{aff:non-triviality}, \ref{aff:Hamiltonian VI} and \ref{aff:freq} hold, the condition \ref{aff:transversality} is trivially satisfied, and the adjoint dynamics in \ref{aff:optimal dynamics} is given by
	\begin{equation}
		\label{e:adj linear sys}
		\adjDyn[t-1] = \sys_{t} \transp \adjDyn - \adjCost \derivative{\genState}{\cost[t] (\optState, \optCon)} \quad \text{for }t = 1, \ldots, \horizon-1.
	\end{equation}
\end{corollary}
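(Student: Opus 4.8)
The plan is to obtain Corollary \ref{co:linear systems} as a direct specialization of Corollary \ref{co:con-affine pmp}. First I would record that the linear system \eqref{e:linear sys} is the control-affine system \eqref{e:con-affine} with \(\affSysDyn[t](\genState) = \sys_{t}\genState\) and \(\affSysCon[t](\genState) = \control_{t}\); both are affine in \(\genState\), hence continuously differentiable, and the remaining standing hypotheses of \eqref{e:aff opt prob} --- convexity of the stage cost in the control argument and convexity, compactness, and non-emptiness of each \(\admControl_{t}\) --- are assumed in the present corollary. Thus Corollary \ref{co:con-affine pmp} applies and produces \(\bigl(\adjDyn\bigr)_{t=0}^{\horizon-1}\), \(\bigl(\adjState\bigr)_{t=0}^{\horizon}\), and \(\bigl(\adjCost,\adjFreq\bigr)\) satisfying \ref{aff:non-negativity}--\ref{aff:freq} with the Hamiltonian \eqref{e:aff ham}. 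Since \ref{aff:non-negativity}, \ref{aff:non-triviality}, \ref{aff:Hamiltonian VI}, and \ref{aff:freq} refer to neither the state-constraint sets nor the particular form of the dynamics beyond what is already encoded in \eqref{e:aff ham}, they hold verbatim; this is the first assertion of the corollary.

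Next I would specialize the adjoint relation in \ref{aff:optimal dynamics}. Evaluated at the linear data, \(\hamiltonian[\adjCost,\adjFreq]\bigl(\adjDyn, t, \optState, \genCon\bigr) = \inprod{\adjDyn}{\sys_{t}\optState + \control_{t}\genCon} - \adjCost\,\cost[t](\optState,\genCon) - \inprod{\adjFreq}{\freqDer[t]\genCon}\); since the frequency term is independent of the state argument, \(\derivative{\genState}{\hamiltonian[\adjCost,\adjFreq]\bigl(\adjDyn, t, \optState, \optCon\bigr)} = \sys_{t}\transp \adjDyn - \adjCost\,\derivative{\genState}{\cost[t](\optState,\optCon)}\). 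For \(t = 1, \ldots, \horizon-1\) the state-constraint set is \(\admStates_{t} = \R^{\sysDim}\), so the tent \(\stateTent(\optState)\) may be taken to be all of \(\R^{\sysDim}\), whose dual cone is \(\set{0}\); hence \(\adjState = 0\) at those stages, and substitution into \(\adjDyn[t-1] = \derivative{\genState}{\hamiltonian[\adjCost,\adjFreq]\bigl(\adjDyn, t, \optState, \optCon\bigr)} - \adjState\) gives precisely \eqref{e:adj linear sys}. The state half of \ref{aff:optimal dynamics} merely reproduces \(\optState[t+1] = \sys_{t}\optState + \control_{t}\optCon\).

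Then I would dispose of the transversality condition \ref{aff:transversality}. Because \(\admStates_{0}\) and \(\admStates_{\horizon}\) are singletons, the only tent of either set at its unique point is the trivial cone \(\set{0}\), whose dual cone is the whole of \(\dualSpace{\bigl(\R^{\sysDim}\bigr)}\); consequently \(\adjState[0]\) and \(\adjState[\horizon]\) are entirely unconstrained. The two equalities of \ref{aff:transversality} then impose no restriction: for any \(\adjDyn[0]\) and \(\adjDyn[\horizon-1]\) they are met by choosing \(\adjState[0] = \derivative{\genState}{\hamiltonian[\adjCost,\adjFreq]\bigl(\adjDyn[0],0,\optState[0],\optCon[0]\bigr)}\) and \(\adjState[\horizon] = -\adjDyn[\horizon-1]\). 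Combining the three steps proves the corollary.

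I do not anticipate a genuine obstacle, since the argument is a routine specialization of Corollary \ref{co:con-affine pmp}. The only delicate point is the correct identification of the tents and of their dual cones in the two degenerate regimes --- the full space \(\R^{\sysDim}\), whose dual cone is \(\set{0}\), and a singleton, whose dual cone is the entire dual space --- together with the bookkeeping needed to confirm that every boundary and adjoint condition supplied by Corollary \ref{co:con-affine pmp} has been accounted for in the reduced statement.
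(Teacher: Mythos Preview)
Your proposal is correct and follows essentially the same approach as the paper: specialize Corollary \ref{co:con-affine pmp}, observe that the dual cone of the tent of \(\R^{\sysDim}\) is \(\set{0}\) (forcing \(\adjState = 0\) for \(t = 1,\ldots,\horizon-1\) and yielding \eqref{e:adj linear sys}), and that the dual cone of the tent of a singleton is all of \(\dualSpace{\bigl(\R^{\sysDim}\bigr)}\) (rendering \ref{aff:transversality} vacuous). The paper's proof is simply a terser version of yours, omitting the explicit Hamiltonian-derivative computation and the verification that the linear system fits the control-affine template.
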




	\section{Linear quadratic optimal control problems}
		\label{sec:LQR problems}
		

In this section we discuss three special cases of linear quadratic (LQ) optimal control problems, all under unconstrained control actions. In \secref{sec:classical LQ} we address the LQ problem with initial and final state constraints and demonstrate that all extremals are normal; this material is standard, but we include it only for the sake of easy reference. \secref{sec:freq LQ} deals with a variation of the LQ state-transfer problem where frequency components of the control sequence are constrained, and we provide conditions for normality of LQ extremals under frequency constraints.

\subsection{Classical LQ problem}
\label{sec:classical LQ}
Consider a linear time-invariant incarnation of \eqref{e:gen sys}:
\begin{equation}
\label{e:LQR sys}
	\state_{t+1} = \sys \state_{t} + \control \conInp_{t}, \quad t = 0, \ldots, \horizon-1,
\end{equation}
where \(\state_{t} \in \R^{\sysDim}\) is the state, \(\conInp_{t} \in \R^{\conDim}\) is the control input at time \(t\), and the system matrix \(\sys \in \R^{\sysDim \times \sysDim}\) and the control matrix \(\control \in \R^{\conDim \times \conDim}\) are known. Consider the following finite horizon LQ problem with unconstrained control actions for the system \eqref{e:LQR sys} given an initial state \(\state_{0} = \initState\):
\begin{equation}
\label{e:LQR prob}
	\begin{aligned}
		& \minimize_{(\conInp)_{t=0}^{\horizon-1}} && \sum_{t=0}^{\horizon-1} \biggl(\half \inprod{\state_{t}}{\stateCost \state_{t}} + \half \inprod{\conInp_{t}}{\controlCost \conInp_{t}} \biggr) \\ 
		& \sbjto &&
		\begin{cases}
			\text{controlled dynamics } \eqref{e:LQR sys},			&\\
			\state_{0} = \initState,
		\end{cases}
	\end{aligned}
\end{equation}
where \(\controlCost \in \R^{\conDim \times \conDim}\) is a given positive definite matrix and \(\stateCost \in \R^{\sysDim \times \sysDim}\) is a given positive semi-definite matrix.

The solution of the LQ problem \eqref{e:LQR prob} can be obtained by using Bellman dynamic programming (DP) principle and algorithm \cite[Chapter 1]{Bertsekas}. This is sketched below in \eqref{e:Bellman Optimality}, and it gives sufficient conditions for optimality of a control sequence \((\optCon)_{t=0}^{\horizon-1}\):
The DP algorithm gives us, with \(\R^{\sysDim} \ni \state \mapsto \BellCost_{t} (\state) \in \R\) denoting the optimal cost-to-go at stage \(t\),
\begin{equation}
\label{e:Bellman Optimality}
\begin{aligned}
	& \BellCost_{\horizon} (\state) \Let 0 \quad \text{for } \state \in \R^{\sysDim} \quad \text{and}\\
	& \BellCost_{t} (\state) = \half \inprod{\state}{\stateCost \state} + \min_{\conInp \in \R^{\conDim}} \biggl( \half \inprod{\conInp}{\controlCost \conInp} + \BellCost_{t+1} (\sys \state + \control \conInp) \biggr) \quad \text{for } t = 0, \ldots, \horizon-1
\end{aligned}
\end{equation}
The fact that the minimum in \eqref{e:Bellman Optimality} is attained follows from the assumption that \(\controlCost\) is positive definite. The following solution of \eqref{e:Bellman Optimality} can be derived readily: for \(t = \horizon-1, \ldots, 0\),
\begin{equation}
\label{e:LQR via DP}
	\begin{cases}
		\BellOpt[\horizon] = 0 \in \R^{\sysDim \times \sysDim},\\
		\BellFeed = - (\controlCost + \control \transp \BellOpt[t+1] \control) \inverse \control \transp \BellOpt[t+1] \sys,\\
		\BellOpt = (\sys + \control \BellFeed) \transp \BellOpt[t+1] (\sys + \control \BellFeed) + \BellFeed \transp \controlCost \BellFeed + \stateCost,\\
		\optCon = \BellFeed \state_{t}.
	\end{cases}
\end{equation}
It is worth noting that the feedback matrix \(\BellFeed\) in \eqref{e:LQR via DP} is independent of any state information, and depends only on how much longer it takes to reach the final stage and the cost-per-stage matrices \(\stateCost\) and \(\controlCost\). 

We employ the classical PMP \cite[Theorem 16]{ref:Bol-75} to \eqref{e:LQR prob}: The Hamiltonian function for \eqref{e:LQR prob} is
\begin{align*}
	& \dualSpace{\R^{\sysDim}} \times \Nz \times \R^{\sysDim} \times \R^{\conDim} \ni (\genDyn, s, \genState, \genCon) \mapsto\\
	& \qquad \hamiltonian[\genCost] (\genDyn, s, \genState, \genCon) \Let \inprod{\genDyn}{\sys \genState + \control \genCon} - \genCost \biggl(\half \inprod{\genState}{\stateCost \genState} + \half \inprod{\genCon}{\controlCost \genCon} \biggr) \quad \text{for } \genCost \in \set[\big]{0, 1}. 
\end{align*}
If \(\bigl((\optState)_{t=0}^{\horizon}, (\optCon)_{t=0}^{\horizon-1}\bigr)\) is an optimal state-action trajectory, then there exist adjoint sequence \(\bigl(\adjDyn \bigr)_{t=0}^{\horizon-1}\) and \(\adjCost \in \set[\big]{0, 1}\), such that \(\adjCost\) and \(\bigl(\adjDyn \bigr)_{t=0}^{\horizon-1}\) are not simultaneously zero, and the necessary conditions of optimality of the trajectory \(\bigl((\optState)_{t=0}^{\horizon}, (\optCon)_{t=0}^{\horizon-1}\bigr)\) given by the PMP can be written as
\begin{enumerate}[label=(\roman*), leftmargin=*, widest=iii]
	\item the adjoint and state dynamics \ref{pmp:optimal dynamics}:
		\begin{equation}
		\label{e:LQ dynamics}
		\begin{aligned}
			& \optState[t+1] = \sys \optState + \control \optCon, \quad \text{for } t = 0, \ldots, \horizon-1,\\
			& \adjDyn[t-1] = \sys \transp \adjDyn - \adjCost \stateCost \optState, \quad \text{for } t = 1, \ldots, \horizon-1;
		\end{aligned}
		\end{equation}
	\item the Hamiltonian maximization condition \ref{pmp:Hamiltonian VI}: At each stage \(t\),
		\begin{equation}
		\label{e:LQ Ham max}
			\derivative{\genCon} \hamiltonian[\adjCost] (\adjDyn, t, \optState, \optCon) = 0 \quad \Rightarrow \quad \adjCost \controlCost \optCon = \control \transp \adjDyn;
		\end{equation}
	\item boundary conditions for the recursive equations are given by the transversality conditions \ref{pmp:transversality}:
		\[
			\optState[0] = \initState \quad \text{and} \quad \adjDyn[\horizon-1] = 0.
		\]
\end{enumerate}
If \(\adjCost = 0\), the adjoint dynamics in \eqref{e:LQ dynamics} reduces to
\[
	\adjDyn[t-1] = \sys \transp \adjDyn \quad \text{for } t = 1, \ldots, \horizon-1.
\]
Since \(\adjDyn[\horizon-1] = 0\), this would imply that \(\adjDyn = 0\) for all \(t = 0, \ldots, \horizon-1\). In other words, \(\adjCost\) and \(\bigl(\adjDyn\bigr)_{t=0}^{\horizon-1}\) would simultaneously vanish, contradicting the non-triviality condition. Hence, there are no abnormal solutions to the PMP in this case. Substituting \(\adjCost = 1\), we get the following set of equations characterising the optimal state-action trajectory.
\begin{equation}
\label{e:LQR via PMP}
\begin{aligned}
	\begin{cases}
		\optState[t+1] = \sys \optState + \control \optCon \quad & \text{for } t = 0, \ldots, \horizon-1, \\
		\adjDyn[t-1] = \sys \transp \adjDyn - \stateCost \optState & \text{for } t = 1, \ldots, \horizon-1, \\
		\optCon = \controlCost \inverse \control \transp \adjDyn & \text{for } t = 0, \ldots, \horizon-1, \\
		\optState[0] = \initState, \quad \adjDyn[\horizon-1] = 0. &
	\end{cases}
\end{aligned}
\end{equation}

Observe that \eqref{e:LQR via PMP} also characterises the optimal control sequence \((\optCon)_{t=0}^{\horizon-1}\) as a linear feedback of the states, which matches with the solution obtained by solving by dynamic programming as exposed in \cite[Chapter 4]{Bertsekas}. 

\label{sec:normal LQ}
For a certain class of LQ optimal control problems in the absence of state and control constraints, all the candidates for optimality are characterised by the PMP with \(\adjCost = 1\), i.e., normal extremals.\footnote{See Remark \ref{rm:extremals}.} One such example is presented next. Recall that a linear time-invariant system \eqref{e:LQR sys} is controllable if \(\rank \pmat{\control & \ldots & \sys^{\sysDim-1} \control} = \sysDim\).

Consider a variation of the LQ problem \eqref{e:LQR prob} where the goal is to reach a specified final state \(\finState \in \R^{\sysDim}\) at time \(\horizon\):
\begin{equation}
\label{e:LQ ST}
	\begin{aligned}
		& \minimize_{(\conInp)_{t=0}^{\horizon-1}} && \sum_{t=0}^{\horizon-1} \biggl(\half \inprod{\state_{t}}{\stateCost \state_{t}} + \half \inprod{\conInp_{t}}{\controlCost \conInp_{t}} \biggr) \\ 
		& \sbjto &&
		\begin{cases}
			\text{controlled dynamics } \eqref{e:LQR sys},			&\\
			\state_{0} = \initState, \quad \state_{\horizon} = \finState.
		\end{cases}
	\end{aligned}
\end{equation}

\begin{proposition}
\label{p:normality}
	If the underlying system \((\sys, \control)\) in \eqref{e:LQ ST} is controllable and \(\horizon \ge \sysDim\), then all the optimal state-action trajectories are normal.
\end{proposition}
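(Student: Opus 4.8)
The plan is to apply the PMP specialization for linear systems (Corollary \ref{co:linear systems}, with the frequency constraint map taken to be trivial, i.e. \(\freqDim = 0\), so that the \(\adjFreq\)-term disappears and we are back to the classical setting) and argue that the abnormal case \(\adjCost = 0\) forces the entire adjoint trajectory to vanish, contradicting non-triviality. So first I would write down the conditions supplied by the corollary for \eqref{e:LQ ST}: the state dynamics \(\optState[t+1] = \sys\optState + \control\optCon\), the adjoint dynamics \(\adjDyn[t-1] = \sys\transp\adjDyn - \adjCost\,\stateCost\,\optState\) for \(t = 1, \ldots, \horizon-1\), the Hamiltonian maximization condition which (since \(\controlCost \succ 0\) and the control is unconstrained) becomes the stationarity condition \(\adjCost\,\controlCost\,\optCon = \control\transp\adjDyn\) at each \(t\), and the transversality data, which here reduces to \(\optState[0] = \initState\), \(\optState[\horizon] = \finState\) with \emph{no} constraint on \(\adjDyn[0]\) or \(\adjDyn[\horizon-1]\) because \(\admStates_0\) and \(\admStates_\horizon\) are singletons (their tents are \(\{0\}\), whose dual cone is all of \(\dualSpace{(\R^\sysDim)}\)) and the intermediate \(\admStates_t = \R^\sysDim\) contribute \(\adjState = 0\).

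Next, set \(\adjCost = 0\) and derive the consequences. The stationarity condition becomes \(\control\transp\adjDyn = 0\) for every \(t = 0, \ldots, \horizon-1\), and the adjoint dynamics becomes the free recursion \(\adjDyn[t-1] = \sys\transp\adjDyn\). Running this backward recursion, \(\adjDyn = (\sys\transp)^{\horizon-1-t}\,\adjDyn[\horizon-1]\) for all \(t\); write \(p \Let \adjDyn[\horizon-1] \in \R^\sysDim\). Then the condition \(\control\transp\adjDyn = 0\) for all \(t\) reads \(\control\transp(\sys\transp)^{j} p = 0\) for \(j = 0, 1, \ldots, \horizon-1\), i.e.\ \(p\transp \sys^{j}\control = 0\) for \(j = 0, \ldots, \horizon-1\). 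Since \(\horizon \ge \sysDim\), this includes \(j = 0, \ldots, \sysDim-1\), so \(p\transp\pmat{\control & \sys\control & \cdots & \sys^{\sysDim-1}\control} = 0\); controllability of \((\sys,\control)\) says this matrix has full row rank \(\sysDim\), hence \(p = 0\). Back-substituting, \(\adjDyn = 0\) for all \(t = 0, \ldots, \horizon-1\). But then \(\adjCost = 0\) and the whole adjoint trajectory \(\bigl(\adjDyn\bigr)_{t=0}^{\horizon-1}\) vanish simultaneously, contradicting the non-triviality condition \ref{aff:non-triviality}. Therefore \(\adjCost \ne 0\), and by Remark \ref{rm:extremals} we may take \(\adjCost = 1\); every extremal is normal.

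There is no serious obstacle here — the argument is a textbook normality proof via the controllability Gramian/Kalman rank condition. The only points requiring mild care are bookkeeping ones: confirming that the transversality conditions impose no restriction on the adjoint at the endpoints when the endpoint sets are singletons (so the abnormal adjoint is genuinely free at \(t = \horizon-1\), which is what lets us run the recursion with arbitrary terminal value), and making sure the index range of the Hamiltonian maximization/stationarity condition truly covers \(t = 0, \ldots, \horizon-1\) so that we collect \(\horizon \ge \sysDim\) independent conditions \(p\transp\sys^{j}\control = 0\). I would state these explicitly but not belabor them. One could alternatively phrase the contradiction in terms of the reachability map: if \(\adjCost = 0\) the optimal control is unconstrained by the cost and the adjoint is orthogonal to the reachable subspace from the origin, which is all of \(\R^\sysDim\) by controllability with \(\horizon \ge \sysDim\); but the Kalman-rank version above is the cleanest to write down.
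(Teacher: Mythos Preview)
Your proposal is correct and follows essentially the same approach as the paper's own proof: assume \(\adjCost = 0\), use the free adjoint recursion \(\adjDyn[t-1] = \sys\transp\adjDyn\) to write \(\adjDyn = (\sys\transp)^{\horizon-1-t}\adjDyn[\horizon-1]\), combine with \(\control\transp\adjDyn = 0\) to force \(\adjDyn[\horizon-1]\) into the left kernel of the controllability matrix, and invoke controllability with \(\horizon \ge \sysDim\) to obtain the contradiction with non-triviality. Your explicit remark on why \(\horizon \ge \sysDim\) is needed and why the endpoint adjoints are free is a welcome clarification but does not depart from the paper's argument.
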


\subsection{Normality of LQ state transfer under frequency constraints}
\label{sec:freq LQ}

Let us consider a third variation of the LQ optimal control problem \eqref{e:LQR prob} with constraints on the frequency components of the control sequence but no state and control constraints. We assume that our frequency constraints stipulate that certain frequency components are set to \(0\). We know (cf. \secref{sec:opt prob}, \eqref{e:frequency components}) that there are \(\horizon\) frequency components in a control sequence of length \(\horizon\), and let us select \(\LQFreqDim\) of these to be zero. Recall from \eqref{e:full freq constraints} that such constraints can be written as
\[
	\sum_{t=0}^{\horizon-1} \freqDer \conInp_{t} = 0,
\]
where \(\freqDer\) are defined appropriately corresponding to the \(\LQFreqDim\) frequencies chosen to be eliminated as discussed in \secref{sec:opt prob}.

Consider
\begin{equation}
\label{e:LQ ST freq}
	\begin{aligned}
		& \minimize_{(\conInp)_{t=0}^{\horizon-1}} && \sum_{t=0}^{\horizon-1} \biggl(\half \inprod{\state_{t}}{\stateCost \state_{t}} + \half \inprod{\conInp_{t}}{\controlCost \conInp_{t}} \biggr) \\ 
		& \sbjto &&
		\begin{cases}
			\text{controlled dynamics } \eqref{e:LQR sys},			&\\
			\sum_{t=0}^{\horizon-1} \freqDer \conInp_{t} = 0, \\
			\state_{0} = \initState, \quad \state_{\horizon} = \finState.
		\end{cases}
	\end{aligned}
\end{equation}
Applying the PMP (cf.\ Theorem \ref{th:main pmp}) to get the necessary conditions of optimality of \(\bigl((\optState)_{t=0}^{\horizon}, (\optCon)_{t=0}^{\horizon-1} \bigr)\), we arrive at the following conditions:

There exist \(\adjCost \in \set[\big]{0, 1}\), \(\adjFreq \in \R^{\freqDim}\), a sequence of adjoint variables \(\bigl(\adjDyn\bigr)_{t=0}^{\horizon-1}\), such that \(\adjCost, \adjFreq\), and \(\bigl(\adjDyn\bigr)_{t=0}^{\horizon-1}\) are not simultaneously zero, and
\begin{equation}
\label{e:LQ freq sol}
	\begin{cases}
		\optState[t+1] = \sys \optState + \control \optCon \quad & \text{for } t = 0, \ldots, \horizon-1, \\
		\adjDyn[t-1] = \sys \transp \adjDyn - \adjCost \stateCost \optState & \text{for } t = 1, \ldots, \horizon-1, \\
		\adjCost \controlCost \optCon = \control \transp \adjDyn - \freqDer \transp \adjFreq & \text{for } t = 0, \ldots, \horizon-1, \\
		\sum_{t=0}^{\horizon-1} \freqDer \optCon = 0, &\\
		\optState[0] = \initState, \quad \text{and} \quad \optState[\horizon] = \finState.
	\end{cases}
\end{equation}
The adjoint variables are free at the boundary, i.e., \(\adjDyn[0]\) and \(\adjDyn[\horizon-1]\) are arbitrary.

\begin{proposition}
	\label{p:freq normality}
	If the underlying system (\(\sys, \control\)) in \eqref{e:LQ ST freq} is controllable, \(\horizon \ge \sysDim\), and the number of frequency constraints \(\freqDim\) satisfies \(\freqDim + \sysDim > \conDim \horizon\), then all the optimal state-action trajectories are abnormal. Conversely, all the optimal state-action trajectories are normal when the reachability matrix \(\pmat{\control & \ldots & \sys^{\horizon-1} \control}\) and the frequency constraints matrix \(\mathscr{F} \linTran \inverse\) have independent rows.
\end{proposition}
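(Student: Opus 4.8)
The plan is to collapse the necessary conditions in \eqref{e:LQ freq sol} to a single rank dichotomy for the matrix obtained by stacking the reachability matrix of \((\sys,\control)\) on top of the frequency-constraint matrix \(\mathscr{F}\linTran\inverse\). First I would fix notation: let \(U \in \R^{\conDim\horizon}\) be the stacked optimal control trajectory \(\optCon[0],\ldots,\optCon[\horizon-1]\); propagating \eqref{e:LQR sys} forward from \(\optState[0]=\initState\) gives \(\optState[\horizon] = \sys^{\horizon}\initState + RU\), where \(R \in \R^{\sysDim\times\conDim\horizon}\) is a column permutation of \(\pmat{\control & \cdots & \sys^{\horizon-1}\control}\) whose \(t\)-th \(\conDim\)-column block is \(\sys^{\horizon-1-t}\control\); by the Cayley--Hamilton theorem, controllability of \((\sys,\control)\) together with \(\horizon\ge\sysDim\) gives \(\rank R=\sysDim\). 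Writing \(\mathcal{G}\Let\mathscr{F}\linTran\inverse = \pmat{\freqDer[0] & \cdots & \freqDer[\horizon-1]}\in\R^{\freqDim\times\conDim\horizon}\), the constraints of \eqref{e:LQ ST freq} beyond the dynamics and \(\optState[0]=\initState\) read \(RU = \finState - \sys^{\horizon}\initState\) and \(\mathcal{G}U=0\), so \(R\) and \(\mathcal{G}\) enter the problem only through \(\pmat{R \\ \mathcal{G}}\in\R^{(\sysDim+\freqDim)\times\conDim\horizon}\).

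The crux is the equivalence: the conditions \eqref{e:LQ freq sol} admit a solution with \(\adjCost=0\) if and only if the rows of \(\pmat{R \\ \mathcal{G}}\) are linearly dependent. Indeed, with \(\adjCost=0\) the adjoint recursion collapses to \(\adjDyn[t-1]=\sys\transp\adjDyn[t]\) for \(t=1,\ldots,\horizon-1\), whence \(\adjDyn[t]=(\sys\transp)^{\horizon-1-t}\adjDyn[\horizon-1]\) for each \(t\), and the stationarity condition becomes \(0 = \control\transp\adjDyn[t]-\freqDer[t]\transp\adjFreq\); substituting the former into the latter and stacking over \(t=0,\ldots,\horizon-1\) turns this into precisely \(R\transp\adjDyn[\horizon-1]=\mathcal{G}\transp\adjFreq\). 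The remaining requirements in \eqref{e:LQ freq sol} --- the state recursion, the frequency constraint \(\sum_{t}\freqDer[t]\optCon[t]=0\), and the end-point conditions --- involve neither \(\adjDyn\) nor \(\adjFreq\) and hold because the given trajectory is optimal for \eqref{e:LQ ST freq}, while the non-triviality clause reduces exactly to \((\adjDyn[\horizon-1],\adjFreq)\ne(0,0)\) since \(\adjCost=0\) and the single vector \(\adjDyn[\horizon-1]\) determines the whole adjoint sequence. Hence an abnormal extremal lift exists iff there is a nonzero pair \((\lambda,\mu)\) with \(R\transp\lambda-\mathcal{G}\transp\mu=0\), i.e.\ iff \(\pmat{R \\ \mathcal{G}}\transp = \pmat{R\transp & \mathcal{G}\transp}\) has a nontrivial kernel, i.e.\ iff \(\rank\pmat{R \\ \mathcal{G}}<\sysDim+\freqDim\).

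Both claims then follow. For the first, \(\freqDim+\sysDim>\conDim\horizon\) makes \(\pmat{R\transp & \mathcal{G}\transp}\in\R^{\conDim\horizon\times(\sysDim+\freqDim)}\) a matrix with strictly more columns than rows, hence with a nontrivial kernel, so every optimal state-action trajectory of \eqref{e:LQ ST freq} admits an abnormal extremal lift. For the converse, ``the reachability matrix and the frequency-constraint matrix have independent rows'' says exactly that \(\pmat{R \\ \mathcal{G}}\) has full row rank \(\sysDim+\freqDim\); then \(R\transp\lambda=\mathcal{G}\transp\mu\) forces \((\lambda,\mu)=0\), so no abnormal lift exists, and since \eqref{e:LQ freq sol} always furnishes some extremal lift and \(\adjCost\in\set{0,1}\) by Remark \ref{rm:extremals}, every such lift has \(\adjCost=1\) and is therefore normal.

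The step I expect to be the main obstacle is bookkeeping rather than conceptual: one must track the index ranges with care --- the adjoint recursion runs only over \(t=1,\ldots,\horizon-1\), and \(\adjDyn[0]\), \(\adjDyn[\horizon-1]\) are free --- and verify that the collapsed stationarity relation really assembles block-by-block into \(R\transp\lambda=\mathcal{G}\transp\mu\) with the column-block orderings of \(R\), \(\mathcal{G}\) and \(U\) kept consistent, ranks being permutation-invariant. A second point worth a sentence is the precise reading of ``all optimal trajectories are (ab)normal'': since normality and abnormality are properties of extremal lifts, the proposition asserts that in the first regime every optimal trajectory admits an abnormal lift, and in the second every lift of every optimal trajectory is normal; the two hypotheses are sufficient conditions for the two sides of the rank dichotomy above, and are not exact logical converses of one another.
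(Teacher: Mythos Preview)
Your proposal is correct and follows essentially the same route as the paper: collapse the abnormal case \(\adjCost=0\) of \eqref{e:LQ freq sol} to the linear relation \(R\transp\adjDyn[\horizon-1]=\mathcal{G}\transp\adjFreq\) and read off both conclusions from the rank of \(\pmat{R\transp & \mathcal{G}\transp}\). Your write-up is, if anything, slightly more careful than the paper's --- you spell out why non-triviality reduces to \((\adjDyn[\horizon-1],\adjFreq)\neq(0,0)\), invoke Cayley--Hamilton to justify \(\rank R=\sysDim\) from controllability and \(\horizon\ge\sysDim\), and flag the distinction between abnormality of a lift versus of a trajectory --- but the underlying argument is the same.
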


	\appendix

	\section{Convex Cones and Separability}
		\label{sec:cones}
	This section deals with defining the basic concepts regarding convex sets used later in developing the necessary conditions for optimality.

	\begin{itemize}[label=\(\circ\), leftmargin=*]
		\item Let \(\genDim\) be a positive integer. Recall that a non-empty subset \(K \subset \R^{\genDim}\) is a \embf{cone} if for every \(y \in K\) and \(\alpha \ge 0\) we have \(\alpha y \in K\). In particular, \(0 \in \R^{\genDim}\) belongs to \(K\). A non-empty subset \(C \subset \R^{\genDim}\) is \embf{convex} if for every \(y, y' \in C\) and \(\theta \in [0, 1]\) we have \((1-\theta)y + \theta y' \in C\).
		\item A hyperplane \(\Gamma\) in \(\R^{\genDim}\) is an (\(\genDim -1\))-dimensional affine subset of \(\R^{\genDim}\). It can be viewed as the level set of a nontrivial linear function \(p : \R^{\genDim} \lra \R\). If \(p\) is given by \(p(\genVar) = \inprod{a}{\genVar}\) for some \(a (\neq 0) \in \R^{\genDim}\), then
			\[ \Gamma \Let \set[\big]{\genVar \in \R^{\genDim} \suchthat \inprod{a}{\genVar} = \alpha}.
			\]
		\item We say that a family \(\set{\genTent_{0}, \genTent_{1}, \ldots, \genTent_{s}}\) of convex cones in \(\R^{\genDim}\) is \embf{separable} if there exists a hyperplane \(\Gamma\) and some \(i \in \set{0, \ldots, s}\) such that the cones \(\genTent_{i}\) and \(\bigcap_{j \neq i} \genTent_{j}\) are on two sides of \(\Gamma\); formally, there exists \(c \in \R^{\genDim}\) and \(i \in \set{0, 1, \ldots, s}\) such that \(K_i \subset \set{y \in \R^{\genDim} \suchthat \inprod{c}{y} \le 0}\) and \(\bigcap_{j \neq i} K_j \subset \set{y\in\R^{\genDim}\suchthat \inprod{c}{y} \ge 0}\). \footnote{\label{Guler} More information on separability can be obtained in \cite{Guler}}
		\item Let \(y \in \R^{\genDim}\). A set \(K \subset\R^{\genDim}\) is a \embf{cone with vertex} \(y\) if it is expressible as \(y + K'\) for some cone \(K' \subset \R^{\genDim}\). In particular, any cone is a cone with vertex \(0 \in \R^{\genDim}\).
		\item Let \(\OMG\) be a nonempty set in \(\R^{\genDim}\). By \(\affHull \OMG\) we denote the set of all affine combinations of points in \(\OMG\). That is,
			\[
				\affHull \OMG = \set[\bigg]{\sum_{i=1}^{k} \theta_{i} \genVar_{i} \suchthat \sum_{i=1}^{k} \theta_{i} = 1, \quad \genVar_{i} \in \OMG \quad \text{for } i = 1, \ldots, k, \text{ and } k \in \N}
			\]
			In other words, \(\affHull \OMG\) is also the smallest affine set containing \(\OMG\). The relative interior \(\relInt \OMG\) of \(\OMG\) denotes the interior of \(\OMG\) relative to the affine space \(\affHull \OMG\).
		\item Let \(M\) be a convex set and \(\vertex \in M\). The union of all the rays emanating from \(\vertex\) and passing through points of \(M\) other than \(\vertex\) is a convex cone with vertex at \(\vertex\). The closure of this cone is called the \embf{supporting cone} of \(M\) at \(\vertex\).
		\item Let \(\genTent \subset \R^{\genDim}\) be a convex cone with vertex at \(\vertex\). By \(\dualCone{\genTent}\) we denote its \embf{polar} (or \embf{dual}) cone defined by
			\begin{equation}
			\label{e:dual cone}
				\dualCone{\genTent} \Let \set[\big]{y \in \dualSpace{\bigl(\R^{\genDim} \bigr)} \suchthat \inprod{y}{x - \vertex} \le 0 \quad \text{for all } x \in \genTent}.
			\end{equation}
			It is clear that \(\dualCone{\genTent}\) is a closed convex cone with vertex at \(\vertex\) in view of the fact that it is an intersection of closed half-spaces:
			\[
				\dualCone{\genTent} = \bigcap_{y\in\genTent} \set[\big]{z\in\dualSpace{\bigl(\R^\genDim\bigr)} \suchthat \inprod{z}{y - \vertex} \le 0}.
			\]
			We adopt the contemporary convention of polarity as given in \cite[p.\ 21]{ref:Cla-13}. Our polars are, therefore, negatives of the polars defined in \cite[p.\ 8]{ref:Bol-75}; consequently and in particular, \(\psi_{0}\) in our Theorem \ref{th:necessary condition} is non-negative while \(\psi_{0}\) in \cite[Theorem\ 16]{ref:Bol-75} is non-positive.
	\end{itemize}

	We need a few results from convex analysis, which we quote from various sources below and for the sake of completeness we provide most of their proofs.

	\begin{theorem}[{{\cite[Theorem 4 on p.\ 8]{ref:Bol-75}}}]
	\label{th:dual cone props}
		Let \(\genTent_{1}, \ldots \genTent_{s}\) be closed convex cones in \(\R^{\genDim}\) with vertex at \(\vertex\). Then 
		\[
			\dualCone{\biggl(\bigcap_{i=1}^{s} \genTent_{i}\biggr)} = \overline{\chull \biggl(\bigcup_{i=1}^{s} \dualCone{\genTent_{i}} \biggr)}.
		\]
		Here \(\overline{S}\) denotes the closure of the set \(S\).
	\end{theorem}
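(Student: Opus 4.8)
The plan is to establish the two inclusions separately, with the bipolar identity \(\dualCone{\dualCone{K}} = K\) for a closed convex cone \(K\) serving as the engine. First I would reduce to the case \(\vertex = 0\) by translating every \(\genTent_i\) by \(-\vertex\); under this translation \(\dualCone{\genTent_i}\) becomes the ordinary polar cone of \(\genTent_i - \vertex\), and both sides of the asserted equality are transported in the same way, so nothing is lost. I would also record the two elementary facts used repeatedly: polarity reverses inclusions (\(A \subseteq B\) implies \(\dualCone{B} \subseteq \dualCone{A}\)), and \(\dualCone{A}\) is always a closed convex cone containing \(0\). Set \(C \Let \overline{\chull\bigl(\bigcup_{i=1}^{s} \dualCone{\genTent_i}\bigr)}\); by construction \(C\) is a closed convex cone with vertex at \(0\) (equivalently, \(C = \overline{\dualCone{\genTent_1} + \cdots + \dualCone{\genTent_s}}\), since the convex hull of finitely many convex cones through \(0\) is their Minkowski sum).

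For the inclusion \(\supseteq\): since \(\bigcap_{j=1}^{s} \genTent_j \subseteq \genTent_i\) for each \(i\), inclusion-reversal gives \(\dualCone{\genTent_i} \subseteq \dualCone{\bigl(\bigcap_{j} \genTent_j\bigr)}\); the right-hand side being a closed convex cone, it contains the convex hull of \(\bigcup_i \dualCone{\genTent_i}\) and hence also its closure, i.e.\ \(C \subseteq \dualCone{\bigl(\bigcap_j \genTent_j\bigr)}\). For the inclusion \(\subseteq\): from \(\dualCone{\genTent_i} \subseteq C\) and inclusion-reversal, \(\dualCone{C} \subseteq \dualCone{\dualCone{\genTent_i}} = \genTent_i\), the equality being the bipolar theorem for the closed convex cone \(\genTent_i\); as this holds for all \(i\) we get \(\dualCone{C} \subseteq \bigcap_i \genTent_i\), and one further application of polarity together with the bipolar theorem for \(C\) gives \(\dualCone{\bigl(\bigcap_i \genTent_i\bigr)} \subseteq \dualCone{\dualCone{C}} = C\). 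The two inclusions yield the claim.

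The only substantive ingredient is the bipolar identity \(\dualCone{\dualCone{K}} = K\) for a closed convex cone \(K\) with vertex \(0\), and this is where I expect the real work to sit. The inclusion \(K \subseteq \dualCone{\dualCone{K}}\) is immediate from the definition. For the reverse, given \(x_{0} \notin K\) I would invoke the separation results of Appendix~\ref{sec:cones} to separate the point \(x_{0}\) from the closed convex set \(K\): there is \(a \neq 0\) with \(\inprod{a}{x_{0}} > \sup_{x \in K}\inprod{a}{x}\); since \(0 \in K\) this supremum is nonnegative, and since \(K\) is a cone it cannot be strictly positive (scaling a point with positive value would drive it to \(+\infty\)), so it equals \(0\). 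Hence \(a \in \dualCone{K}\) while \(\inprod{a}{x_{0}} > 0\) shows \(x_{0} \notin \dualCone{\dualCone{K}}\). The delicate points are the appeal to strict separation of a point from a closed convex set and the observation that, for a cone, the separating functional can be normalized so that its supremum over \(K\) is exactly \(0\); everything else is formal bookkeeping with polars. Alternatively, since the bipolar theorem is entirely classical, it may simply be cited.
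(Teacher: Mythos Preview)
Your argument is correct and proceeds along a genuinely different line from the paper's. For the inclusion \(\overline{\chull\bigl(\bigcup_i \dualCone{\genTent_i}\bigr)} \subseteq \dualCone{\bigl(\bigcap_i \genTent_i\bigr)}\) both arguments amount to monotonicity of the polar under inclusion; the paper phrases it by decomposing an element of the convex hull as a finite sum of vectors from the \(\dualCone{\genTent_i}\), while you invoke inclusion-reversal directly. The real divergence is in the reverse inclusion. You derive it from two applications of the bipolar identity \(\dualCone{\dualCone{K}} = K\) (valid for closed convex cones), which in turn rests on the separation theorem you sketch. The paper instead attempts a direct verification: starting from \(a \in \dualCone{\bigl(\bigcap_i \genTent_i\bigr)}\), it asserts that the inequality \(\inprod{a}{x - \vertex} \le 0\) then holds for every \(x \in \genTent_i\), concluding \(a \in \dualCone{\genTent_i}\) for each \(i\). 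That implication is false in general (e.g.\ in \(\R^2\) with \(\genTent_1 = \{x_1 \ge 0\}\), \(\genTent_2 = \{x_1 \le 0\}\): the polar of \(\genTent_1 \cap \genTent_2\) is the entire \(x_1\)-axis, while \(\dualCone{\genTent_1} \cap \dualCone{\genTent_2} = \{0\}\)). Your bipolar route is the standard one and is sound; what it costs is an appeal to separation, which the paper's argument ostensibly avoids but only by leaving the hard inclusion unproved.
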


	\begin{proof}
		Let \(\genTent \Let \bigcap_{i=1}^{s} \genTent_{i}\). If \(\genVec \in \dualCone{\genTent}\), then for every \(\genVar \in \genTent\) we have
		\begin{equation}
		\label{e:chull dual vec}
			\inprod{\genVec}{\genVar - \vertex} \le 0.
		\end{equation}
		In particular, the relation \eqref{e:chull dual vec} holds for \(\genVar \in \genTent_{i}\) for each \(i = 1, \ldots, s\). This implies that \(\genVec \in \dualCone{\genTent_{i}}\) for \(i = 1, \ldots, s\). Thus,
		\[
			\genVec \in \bigcap_{i=1}^{s} \dualCone{\genTent_{i}} \subset \chull \biggl( \bigcup_{i=1}^{s} \dualCone{\genTent_{i}} \biggr) \subset \overline{\chull \biggl( \bigcup_{i=1}^{s} \dualCone{\genTent_{i}} \biggr)}.
		\]
		This shows that \( \dualCone{\bigl(\bigcap_{i=1}^{s} \genTent_{i}\bigr)} \subset \overline{\chull \bigl(\bigcup_{i=1}^{s} \dualCone{\genTent_{i}}\bigr)}\).

		Now let us prove the converse inclusion. Let \(\genVec \in \chull \bigl( \bigcup_{i=1}^{s} \dualCone{\genTent_{i}} \bigr)\). Then there exist vectors \(\genVec_{1}, \ldots, \genVec_{k} \in \bigcup_{i=1}^{s} \dualCone{\genTent_{i}}\) such that
		\[
			\genVec = \genVec_{1} + \cdots + \genVec_{k}.
		\]
		Since \(\genVec_{i} \in \bigcup_{i=1}^{s} \dualCone{\genTent_{i}}\), for every \(\genVar \in \genTent\) we have \(\inprod{\genVec_{i}}{\genVar - \vertex} \le 0\) for \(i = 1, \ldots, s\). Thus,
		\begin{align*}
			\inprod{\genVec}{\genVar - \vertex} & = \inprod{\genVec_{1}}{\genVar - \vertex} + \cdots + \inprod{\genVec_{s}}{\genVar - \vertex} \le 0.
		\end{align*}
		Therefore, \(\chull \bigl( \bigcup_{i=1}^{s} \dualCone{\genTent_{i}} \bigr) \subset \dualCone{\bigl( \bigcap_{i=1}^{s} \genTent_{i} \bigr)}\). Since the dual cone \(\dualCone{\bigl( \bigcap_{i=1}^{s} \genTent_{i} \bigr)}\) is a closed convex cone, the closure \(\overline{\chull \bigl( \bigcup_{i=1}^{s} \dualCone{\genTent_{i}} \bigr)}\) is also a subset of \(\dualCone{\bigl( \bigcap_{i=1}^{s} \genTent_{i} \bigr)}\).
	\end{proof}

	\begin{theorem}[{{\cite[Theorem 5 on p.\ 8]{ref:Bol-75}}}]
	\label{th:convex set props}
		Let \(\OMG_{1}, \ldots, \OMG_{s}\) be convex sets in \(\R^{\genDim}\) such that \(\bigcap_{i=1}^{s} \relInt \OMG_{i} \neq \emptyset\). Then
		\begin{enumerate}[label={\rm (\roman*)}, leftmargin=*, align=left, widest=iii]
			\item \label{closure} \(\overline{\bigcap_{i=1}^{s} \OMG_{i}} = \bigcap_{i=1}^{s} \overline{\OMG}_{i}\),
			\item \label{aff hull} \(\affHull \bigl(\bigcap_{i=1}^{s} \OMG_{i} \bigr) = \bigcap_{i=1}^{s} \affHull \OMG_{i}\),
			\item \label{rel int} \(\relInt \bigl(\bigcap_{i=1}^{s} \OMG_{i} \bigr) = \bigcap_{i=1}^{s} \relInt \OMG_{i}\).
		\end{enumerate}
	\end{theorem}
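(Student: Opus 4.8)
The plan is to exploit the single nonempty common point \(z \in \bigcap_{i=1}^{s} \relInt \OMG_i\) guaranteed by the hypothesis, and to transfer relative-interior membership back and forth between each \(\OMG_i\) and the intersection \(\OMG \Let \bigcap_{i=1}^{s}\OMG_i\) using two standard facts about a convex set \(C\): (a) the \emph{line-segment principle} --- if \(y \in \relInt C\) and \(x \in \overline{C}\), then \((1-\lambda)y + \lambda x \in \relInt C\) for every \(\lambda \in [0,1)\); and (b) the \emph{prolongation characterization} --- a point \(y \in C\) lies in \(\relInt C\) iff for each \(x \in C\) there is \(\mu > 1\) with \((1-\mu)x + \mu y \in C\). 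In all three parts the inclusion ``\(\subseteq\)'' is elementary and needs no hypothesis: \(\OMG \subseteq \OMG_i\) forces \(\overline{\OMG} \subseteq \bigcap_{i=1}^{s} \overline{\OMG_i}\) and \(\affHull \OMG \subseteq \bigcap_{i=1}^{s} \affHull \OMG_i\) (the right-hand sides being, respectively, a closed and an affine superset of \(\OMG\)), while the reverse inclusions are where the common point \(z\) does the work.

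For part \ref{closure}, I would take an arbitrary \(x \in \bigcap_{i=1}^{s} \overline{\OMG_i}\) and apply (a) inside each \(\OMG_i\) with \(y = z\): for every \(\lambda \in [0,1)\) the point \((1-\lambda)z + \lambda x\) lies in \(\relInt \OMG_i \subseteq \OMG_i\) for every \(i\), hence in \(\OMG\); letting \(\lambda \uparrow 1\) gives \(x \in \overline{\OMG}\), which is the missing inclusion.

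Part \ref{rel int} is the crux, and I expect it to be the main obstacle. For ``\(\supseteq\)'', take \(z' \in \bigcap_{i=1}^{s} \relInt \OMG_i\) and verify the prolongation criterion (b) for \(\OMG\): given \(x \in \OMG\), each \(\OMG_i\) supplies \(\mu_i > 1\) with \((1-\mu_i)x + \mu_i z' \in \OMG_i\); choosing \(\mu \Let \min_i \mu_i > 1\), the point \((1-\mu)x + \mu z'\) is a convex combination of \(x \in \OMG_i\) and \((1-\mu_i)x + \mu_i z' \in \OMG_i\), hence lies in every \(\OMG_i\) and so in \(\OMG\) --- giving \(z' \in \relInt \OMG\) and, in particular, \(z \in \relInt \OMG\). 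For ``\(\subseteq\)'', take \(z' \in \relInt \OMG\); using that \(z \in \relInt \OMG\) was just established, extend the segment from \(z\) through \(z'\) slightly beyond \(z'\) to land at some \(w \in \OMG \subseteq \overline{\OMG_i}\), so that \(z'\) lies strictly between \(z \in \relInt \OMG_i\) and \(w \in \overline{\OMG_i}\); the line-segment principle (a) then places \(z'\) in \(\relInt \OMG_i\) for every \(i\). The delicate points are that relative interiors are taken with respect to possibly different affine hulls, so the two transfer arguments must stay intrinsic to each \(\OMG_i\), and that the ``\(\subseteq\)'' direction genuinely needs the already-proven ``\(\supseteq\)'' direction in order to know that the common point \(z\) sits in \(\relInt \OMG\).

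Finally, for part \ref{aff hull} I would again use that \(z \in \relInt \OMG_i\) makes \(\OMG_i - z\) radially absorbing in its parallel subspace \(\affHull \OMG_i - z\): given \(w \in \bigcap_{i=1}^{s} \affHull \OMG_i\), for each \(i\) a small step \(z + \varepsilon_i(w - z)\) stays in \(\OMG_i\), and taking \(\varepsilon \Let \min_i \varepsilon_i > 0\) yields \(p \Let z + \varepsilon(w - z) \in \OMG\) by the same convex-combination trick; then \(w = z + \varepsilon^{-1}(p - z)\) is an affine combination of \(z, p \in \OMG\), so \(w \in \affHull \OMG\). Throughout, every reverse inclusion fails without \(\bigcap_{i=1}^{s} \relInt \OMG_i \neq \emptyset\) (sets meeting only along their relative boundaries give easy counterexamples), which is exactly the hypothesis that makes the line-segment and prolongation transfers available.
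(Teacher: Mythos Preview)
Your argument is correct and, in fact, more complete than the proof printed in the paper. In each of the three parts the paper only verifies the elementary containment \(\subseteq\) (the one that holds for arbitrary sets, with no use of the hypothesis \(\bigcap_i \relInt \OMG_i \neq \emptyset\)): for \ref{closure} it takes \(x\in\overline{\OMG}\) and pushes it into each \(\overline{\OMG_i}\); for \ref{aff hull} it takes an affine combination of points of \(\OMG\) and notes that these points lie in every \(\OMG_i\). The reverse inclusions --- precisely the ones your line-segment and prolongation arguments supply --- are not addressed in the paper. In \ref{rel int} the paper attempts both directions at once via a chain of \(\Leftrightarrow\), but the step
\[
	\ball{\genVar} \cap \Bigl(\bigcap_{i=1}^{s} \affHull \OMG_{i}\Bigr) \subset \bigcap_{i=1}^{s} \OMG_{i}
	\;\Leftrightarrow\;
	\ball{\genVar} \cap \affHull \OMG_{i} \subset \OMG_{i}\ \text{for each }i
\]
is not an equivalence: the affine hull of a single \(\OMG_i\) can be strictly larger than \(\bigcap_j \affHull\OMG_j\), so the forward implication fails in general.

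Your approach --- fixing a common relative-interior point \(z\), then moving along segments through \(z\) (the line-segment principle for \ref{closure} and the reverse inclusion of \ref{rel int}, the prolongation criterion for the forward inclusion of \ref{rel int}, and the radial-absorption argument for \ref{aff hull}) --- is the standard and correct route (cf.\ Rockafellar, \emph{Convex Analysis}, Theorem~6.5). It genuinely uses the hypothesis exactly where it is needed, and the order you chose (establishing \(\bigcap_i \relInt\OMG_i \subseteq \relInt\OMG\) first, so that \(z\in\relInt\OMG\) is available for the other inclusion) is the right one.
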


	\begin{proof}
		Let \(\OMG \Let \bigcap_{i=1}^{s} \OMG_{i}\).
		\begin{enumerate}[label={\rm (\roman*)}, leftmargin=*, align=left, widest=iii]
		\item If \(\genVar \in \overline{\OMG}\), then there exists a sequence \(\genVar_{k} \in \OMG\) such that \(\genVar_{k} \ra \genVar\). But,
		\[
			\genVar_{k} \in \OMG \Leftrightarrow \genVar_{k} \in \OMG_{i} \quad \text{for } i = 1, \ldots, s.
		\]
		This means that for each \(i = 1, \ldots, s\), there exists a sequence \(\genVar_{k} \in \OMG_{i}\) with \(\genVar_{k} \ra \genVar\), implying that \(\genVar \in \overline{\OMG}_{i}\). This proves the condition \ref{closure}.

		\item 
			If \(\genVar \in \affHull \bigl(\bigcap_{i=1}^{s} \OMG_{i} \bigr)\), then there exist vectors \(\genVar_{1}, \ldots, \genVar_{k} \in \bigcap_{i=1}^{s} \OMG_{i}\) such that
			\[
				\sum_{j=1}^{k} \theta_{j} \genVar_{j} = \genVar \quad \text{with} \quad \sum_{j=1}^{k} \theta_{j} = 1.
			\]
				Since \(\genVar_{j} \in \bigcap_{i=1}^{s} \OMG_{i}\)  if and only if \(\genVar_{j} \in \OMG_{i}\) for each \(i = 1, \ldots, s\), we have \(\genVar \in \affHull \OMG_{i}\) for each \(i = 1, \ldots, s\).

		\item If \(\genVar \in \relInt \bigl(\bigcap_{i=1}^{s} \OMG_{i}\bigr)\), then there exists an \(\epsilon > 0\) such that
			\[
				\ball{\genVar} \cap \affHull \biggl(\bigcap_{i=1}^{s} \OMG_{i} \biggr) \subset \bigcap_{i=1}^{s} \OMG_{i}.
			\]
			In view of condition \ref{aff hull}, we have the following.
			\begin{align*}
				& \ball{\genVar} \cap \biggl(\bigcap_{i=1}^{s} \affHull \OMG_{i} \biggr) \subset \bigcap_{i=1}^{s} \OMG_{i} \\
				\Leftrightarrow \quad & \ball{\genVar} \cap \affHull \OMG_{i} \subset \bigcap_{i=1}^{s} \OMG_{i} \subset \OMG_{i} \quad \text{for each } i = 1, \ldots, s \\
				\Leftrightarrow \quad & \genVar \in \relInt \OMG_{i} \quad \text{for each } i = 1, \ldots, s \\
				\Leftrightarrow \quad & \genVar \in \bigcap_{i=1}^{s} \relInt \OMG_{i}.
			\end{align*}
		\end{enumerate}
	\end{proof}

	\begin{theorem}[{{\cite[Theorem 3 on p.\ 7]{ref:Bol-75}}}]
	\label{th:convex family vectors}
		Let \(\genTent_{1}, \ldots, \genTent_{s}\) be closed convex cones in \(\R^{\genDim}\) with vertex at 0. If the cone \(\genTent = \chull \bigl( \bigcup_{i=1}^{s} \genTent_{s} \bigr)\) is not closed, then there are vectors \(\lambda_{1} \in \genTent_{1}, \ldots, \lambda_{s} \in \genTent_{s}\), not all of them zero, such that \(\lambda_{1} + \cdots + \lambda_{s} = 0\).
	\end{theorem}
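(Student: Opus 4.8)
The plan is to establish the contrapositive: assuming that the only solution of \(\lambda_{1} + \cdots + \lambda_{s} = 0\) with \(\lambda_{i} \in \genTent_{i}\) for each \(i\) is \(\lambda_{1} = \cdots = \lambda_{s} = 0\), I will show that the cone \(\genTent = \chull\bigl(\bigcup_{i=1}^{s}\genTent_{i}\bigr)\) is closed. First I would record the elementary identity
\[
    \chull\Bigl(\bigcup_{i=1}^{s}\genTent_{i}\Bigr) = \genTent_{1} + \cdots + \genTent_{s} = \set[\big]{\lambda_{1} + \cdots + \lambda_{s} \suchthat \lambda_{i} \in \genTent_{i} \text{ for each } i},
\]
which holds precisely because each \(\genTent_{i}\) is a convex cone with vertex \(0\): any convex combination of points drawn from \(\bigcup_{i}\genTent_{i}\) can be regrouped cone by cone, a nonnegative combination of points of a single convex cone stays in that cone (unused cones contribute \(0\)), and conversely \(\sum_{i}\lambda_{i} = \sum_{i}\tfrac1s(s\lambda_{i})\) exhibits any Minkowski sum as a convex combination of the points \(s\lambda_{i} \in \genTent_{i}\). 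Consequently \(\genTent\) is exactly the image of the continuous linear map \(\Phi : \genTent_{1}\times\cdots\times\genTent_{s} \lra \R^{\genDim}\) given by \(\Phi(\lambda_{1},\dots,\lambda_{s}) \Let \lambda_{1}+\cdots+\lambda_{s}\).

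Next, fix any norm \(\abs{\cdot}\) on \(\R^{\genDim}\) and consider the normalized slice
\[
    S \Let \set[\big]{(\lambda_{1},\dots,\lambda_{s}) \in \genTent_{1}\times\cdots\times\genTent_{s} \suchthat \textstyle\sum_{i=1}^{s}\abs{\lambda_{i}} = 1}.
\]
Since each \(\genTent_{i}\) is closed, \(S\) is closed and bounded, hence compact, in \((\R^{\genDim})^{s}\). The standing assumption says exactly that \(\Phi\) does not vanish on \(S\) (if \(\Phi(\lambda) = 0\) for \(\lambda \in S\), then all \(\lambda_{i} = 0\), contradicting \(\sum_{i}\abs{\lambda_{i}} = 1\)), so by compactness and continuity \(\delta \Let \min_{\lambda \in S}\abs{\Phi(\lambda)} > 0\). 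Homogeneity of \(\Phi\) then upgrades this to the global lower bound \(\abs{\Phi(\lambda)} \ge \delta\,\sum_{i=1}^{s}\abs{\lambda_{i}}\) for every \(\lambda \in \genTent_{1}\times\cdots\times\genTent_{s}\): it is trivial when \(\lambda = 0\), and for \(\lambda \neq 0\) one rescales by \(r = \sum_{i}\abs{\lambda_{i}} > 0\) so that \(\lambda/r \in S\) and \(\abs{\Phi(\lambda)} = r\,\abs{\Phi(\lambda/r)} \ge r\delta\).

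Finally I would run the closedness argument. Let \(y \in \overline{\genTent}\) and pick \(y_{k} \in \genTent\) with \(y_{k} \to y\); write \(y_{k} = \Phi(\lambda^{k})\) with \(\lambda^{k} = (\lambda_{1}^{k},\dots,\lambda_{s}^{k}) \in \genTent_{1}\times\cdots\times\genTent_{s}\). The global bound gives \(\sum_{i}\abs{\lambda_{i}^{k}} \le \abs{y_{k}}/\delta\), and since \((y_{k})\) converges it is bounded, so \((\lambda^{k})\) is a bounded sequence in the closed set \(\genTent_{1}\times\cdots\times\genTent_{s}\). Extracting a convergent subsequence \(\lambda^{k} \to \lambda^{*}\) in that set and invoking continuity of \(\Phi\) yields \(y = \lim_{k}\Phi(\lambda^{k}) = \Phi(\lambda^{*}) \in \genTent\). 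Hence \(\genTent\) is closed, which is the contrapositive of the claim. The only mildly delicate step is the passage from compactness of the slice \(S\) to the global linear lower bound on \(\abs{\Phi(\cdot)}\); the rest is bookkeeping with cones and sequences.
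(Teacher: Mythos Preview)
Your proof is correct and rests on the same compactness idea as the paper's, just organized as the contrapositive: you show that the absence of a nontrivial zero-sum yields a uniform lower bound \(\abs{\Phi(\lambda)} \ge \delta \sum_i \abs{\lambda_i}\) on the normalized slice, which forces boundedness of decompositions and hence closedness of \(\genTent\). The paper argues directly: it picks \(\genVar \in \overline{\genTent}\setminus\genTent\), writes \(\genVar_k = \sum_i \genVar_k^{(i)} \to \genVar\), observes that \(\alpha_k \Let \max_i \norm{\genVar_k^{(i)}}\) must blow up (else a convergent subsequence would place \(\genVar\) in \(\genTent\)), and then extracts subsequential limits of the rescaled tuples \(\genVar_k^{(i)}/\alpha_k\) to produce the nontrivial zero-sum. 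Both arguments pivot on compactness of a norm-one slice of \(\genTent_1\times\cdots\times\genTent_s\); your version makes the quantitative coercivity bound explicit, while the paper's version absorbs it into the step ``\(\alpha_k\to\infty\) since otherwise \(\genVar\in\genTent\).''
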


	\begin{proof}
		Let \(\genVar \in \overline{\genTent} \setmin \genTent\). Then there exists a sequence of vectors \(\genVar_{k} \in \genTent\) such that \(\genVar_{k} \ra \genVar\). Since \(\genVar_{k} \in \genTent\), we can write
		\[
			\genVar_{k} = \genVar_{k} \kth[1] + \cdots + \genVar_{k} \kth[s] \quad \text{with } \genVar_{k} \kth[i] \in \genTent_{i}
		\]
		Define \(\alpha_{k} \Let \max_{i} \norm{\genVar_{k} \kth[i]}\). We may assume that \(\alpha_{k} > 0\) for all \(k\). It can be seen that \(\alpha_{k} \ra \infty\) since it would mean that \(\genVar \in \genTent\) otherwise. Let \(y_{k} \kth[i] \Let \frac{1}{\alpha_{k}} \genVar_{k} \kth[i]\).

		Without loss of generality, we may assume that the limits \(y \kth[i] = \lim_{k \to \infty} y_{k} \kth[i]\) exist for \(i = 1, \ldots, s\). Since \(\max_{i} \norm{y_{k} \kth[i]} = 1\) for each \(k\), at least one of the vectors \(y \kth[1], \ldots, y \kth[s]\) is not zero. Moreover since \(\genTent_{i}\) is closed, we have \(y \kth[i] \in \genTent_{i}\).

		Since \(\lim_{k \to \infty} \genVar_{k} = \genVar\) and \(\lim_{k \to \infty} \alpha_{k} = \infty\), we have
		\begin{align*}
			y \kth[1] + \cdots + y \kth[s] & = \lim_{k \to \infty} \biggl( y_{k} \kth[1] + \cdots + y_{k} \kth[s] \biggr) \\
			& = \lim_{k \to \infty} \frac{1}{\alpha_{k}} \biggl(\genVar_{k} \kth[1] + \cdots + \genVar_{k} \kth[s] \biggr) \\
			& = \lim_{k \to \infty} \frac{1}{\alpha_{k}} \genVar_{k} = 0.\qedhere
		\end{align*}
	\end{proof}

	\begin{theorem}[{{\cite[Theorem 6 on p.\ 9]{ref:Bol-75}}}]
	\label{th:convex family sep}
		If a family \(\genTent_{1}, \ldots, \genTent_{s}\) of convex cones with a common vertex at \(\vertex\) is not separable, then \(\bigcap_{i=1}^{s} \relInt \genTent_{i} \neq \emptyset\).
	\end{theorem}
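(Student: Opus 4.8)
The plan is to prove the contrapositive: assuming $\bigcap_{i=1}^{s}\relInt\genTent_{i}=\emptyset$, I will exhibit a hyperplane and an index that witness separability of the family. As a harmless first reduction, translate every cone by $-\vertex$ so that the common vertex becomes the origin; the separability condition and the relative-interior condition are both invariant under this translation, so it suffices to treat convex cones with vertex $0$.

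The key step is to locate where the relative interiors first fail to intersect. Since each $\genTent_{i}$ is a nonempty convex set it has nonempty relative interior, so the set of indices $m\in\{1,\ldots,s\}$ for which $\bigcap_{i=1}^{m}\relInt\genTent_{i}\neq\emptyset$ contains $1$; by hypothesis it does not contain $s$. Let $m$ be its largest element, so $1\le m\le s-1$, the intersection $\bigcap_{i=1}^{m}\relInt\genTent_{i}$ is nonempty, while $\bigcap_{i=1}^{m+1}\relInt\genTent_{i}=\emptyset$. Set $\genTent'\Let\bigcap_{i=1}^{m}\genTent_{i}$, which is again a convex cone with vertex $0$. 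Because $\bigcap_{i=1}^{m}\relInt\genTent_{i}\neq\emptyset$, Theorem \ref{th:convex set props} (iii) applies to $\genTent_{1},\ldots,\genTent_{m}$ and gives $\relInt\genTent'=\bigcap_{i=1}^{m}\relInt\genTent_{i}$; hence $\relInt\genTent'\neq\emptyset$ and $\relInt\genTent'\cap\relInt\genTent_{m+1}=\bigcap_{i=1}^{m+1}\relInt\genTent_{i}=\emptyset$.

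Now $\genTent'$ and $\genTent_{m+1}$ are two nonempty convex sets with disjoint relative interiors (note they are \emph{not} disjoint as sets, since both contain $0$), so the standard separation theorem for such sets provides a nonzero $c\in\R^{\genDim}$ and $\alpha\in\R$ with $\inprod{c}{x}\le\alpha\le\inprod{c}{y}$ for all $x\in\genTent'$ and $y\in\genTent_{m+1}$. I would then push the separating level down to the vertex by homogeneity: since $0\in\genTent'$ we have $\alpha\ge 0$, and since $\genTent'$ is a cone $\sup_{x\in\genTent'}\inprod{c}{x}$ is either $0$ or $+\infty$; being bounded above by $\alpha$ it must equal $0$, so $\inprod{c}{x}\le 0$ on $\genTent'$. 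Symmetrically $0\in\genTent_{m+1}$ forces $\alpha\le 0$, and then $\inprod{c}{y}\ge 0$ on $\genTent_{m+1}$. Taking the index $i=m+1$ and the functional $-c$ now finishes it: $\genTent_{m+1}\subset\{y:\inprod{-c}{y}\le 0\}$ by the previous line, while $\bigcap_{j\neq m+1}\genTent_{j}\subset\bigcap_{i=1}^{m}\genTent_{i}=\genTent'\subset\{y:\inprod{-c}{y}\ge 0\}$. This is exactly the definition of separability of the family, which establishes the contrapositive.

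\emph{Main obstacle.} The argument is mostly index bookkeeping together with Theorem \ref{th:convex set props} (iii); the one substantive ingredient is the separation step, where one must invoke the version of the Hahn--Banach separation theorem valid for two convex sets with disjoint \emph{relative} interiors (ordinary separation of disjoint convex sets is inapplicable because $\genTent'$ and $\genTent_{m+1}$ share the vertex), and then run the cone-homogeneity argument converting ``separated at level $\alpha$'' into the homogeneous half-space form demanded by the definition. An alternative organization replaces the threshold index by an induction on $s$, peeling off $\genTent_{s}$ and splitting into the cases $\bigcap_{i=1}^{s-1}\relInt\genTent_{i}=\emptyset$ and $\neq\emptyset$; the essential separation step is identical.
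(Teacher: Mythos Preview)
Your proof is correct and follows essentially the same route as the paper: both argue by contraposition, locate the threshold index $m$ where the relative interiors first fail to intersect, invoke Theorem \ref{th:convex set props}\,(iii) to identify $\relInt\bigl(\bigcap_{i\le m}\genTent_i\bigr)$ with $\bigcap_{i\le m}\relInt\genTent_i$, separate $\genTent_{m+1}$ from $\bigcap_{i\le m}\genTent_i$, and then use the inclusion $\bigcap_{j\ne m+1}\genTent_j\subset\bigcap_{i\le m}\genTent_i$ to conclude. Your write-up is in fact more careful than the paper's on the separation step---you correctly flag that the two cones share the vertex and so one needs the relative-interior version of the separation theorem, and you spell out the homogeneity argument pushing the separating level to $0$---whereas the paper simply asserts separability.
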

	
	\begin{proof}
		Suppose that \(\bigcap_{i=1}^{s} \relInt \genTent_{i} = \emptyset\) and let \(m < s\) be a positive number such that
		\[
			\bigcap_{i=1}^{m} \relInt \genTent_{i} \neq \emptyset \quad \text{and} \quad \bigcap_{i=1}^{m+1} \relInt \genTent_{i} = \emptyset.
		\]
		By Theorem \ref{th:convex set props} (condition \ref{rel int}), \(\bigcap_{i=1}^{m} \relInt \genTent_{i} = \relInt \bigl(\bigcap_{i=1}^{m} \genTent_{i}\bigr)\). This implies that
		\[
			\bigcap_{i=1}^{m+1} \relInt \genTent_{i} = \biggl(\bigcap_{i=1}^{m} \relInt \genTent_{i} \biggr) \cap \relInt \genTent_{m+1} = \relInt \biggl(\bigcap_{i=1}^{m} \genTent_{i}\biggr) \cap \relInt \genTent_{m+1} = \emptyset.
		\]
		Therefore the convex cones \(\genTent_{m+1}\) and \(\bigcap_{i=1}^{m} \genTent_{i}\) have non-empty interior and hence are separable. This implies that the convex cones \(\genTent_{m+1}\) and \(\bigcap_{i \neq m} \genTent_{i}\) are also separable, which contradicts the assumption that the family of cones \(\set[\big]{\genTent_{1}, \ldots, \genTent_{s}}\) is not separable.
	\end{proof}

	\begin{theorem}[{{\cite[Theorem 2 on p.\ 6]{ref:Bol-75}}}]
	\label{th:dual cone vectors}
		Let \(s \in \N\) and \(\set[\big]{\genTent_{0}, \genTent_{1}, \ldots, \genTent_{s}}\) be a family of convex cones in \(\R^{\genDim}\) with a common vertex \(\vertex\). This family is separable if and only if there exist \(\lambda_{i} \in \dualCone{\genTent_{i}}\) for each \(i = 0, \ldots, s\), not all zero, that satisfy the condition
		\begin{equation}
		\label{e:dc vec req}
			\lambda_{0} + \cdots + \lambda_{s} = 0.
		\end{equation}
	\end{theorem}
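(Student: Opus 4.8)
The plan is to prove the two implications separately, reducing both to the polar-cone calculus already assembled. As a normalization I would translate so that $\vertex = 0$, so that $\dualCone{K_i} = \set[\big]{\lambda \suchthat \inprod{\lambda}{x} \le 0 \text{ for all } x \in K_i}$ and separating hyperplanes pass through the origin; and, since a convex cone and its closure have the same polar and (I will argue) the same separability behaviour, I would also assume each $K_i$ is closed. The ``if'' direction is then immediate: given $\lambda_i \in \dualCone{K_i}$, not all zero, with $\sum_{i=0}^{s} \lambda_i = 0$, choose $i_0$ with $c \Let \lambda_{i_0} \neq 0$; for $x \in K_{i_0}$ one has $\inprod{c}{x} \le 0$, and for $x \in \bigcap_{j \neq i_0} K_j$ one has $\inprod{c}{x} = - \sum_{j \neq i_0} \inprod{\lambda_j}{x} \ge 0$, so $K_{i_0}$ and $\bigcap_{j \neq i_0} K_j$ lie on opposite sides of the hyperplane $\set[\big]{x \suchthat \inprod{c}{x} = 0}$, i.e.\ the family is separable.

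For the ``only if'' direction I would argue as follows. Suppose the family is separable; relabelling, take $0$ to be the separating index, so there is a nonzero $c$ with $K_0 \subset \set[\big]{x \suchthat \inprod{c}{x} \le 0}$ and $\bigcap_{j=1}^{s} K_j \subset \set[\big]{x \suchthat \inprod{c}{x} \ge 0}$. The first inclusion says exactly $c \in \dualCone{K_0}$, and the second says $-c \in \dualCone{\bigl(\bigcap_{j=1}^{s} K_j\bigr)}$, which by Theorem \ref{th:dual cone props} equals $\overline{\chull\bigl(\bigcup_{j=1}^{s} \dualCone{K_j}\bigr)}$. I would now split into two cases. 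If $\chull\bigl(\bigcup_{j=1}^{s} \dualCone{K_j}\bigr)$ is closed, then $-c$ itself lies in it, hence is a finite sum of elements of $\bigcup_{j} \dualCone{K_j}$; grouping the summands that belong to each $\dualCone{K_j}$ (legitimate since each $\dualCone{K_j}$ is a convex cone and hence closed under addition) yields $\lambda_j \in \dualCone{K_j}$ with $\sum_{j=1}^{s} \lambda_j = -c$, and $\lambda_0 \Let c \in \dualCone{K_0}$ then gives $\sum_{i=0}^{s} \lambda_i = 0$ with $\lambda_0 \neq 0$, so the $\lambda_i$ are not all zero. If instead $\chull\bigl(\bigcup_{j=1}^{s} \dualCone{K_j}\bigr)$ is \emph{not} closed, I would apply Theorem \ref{th:convex family vectors} to the closed convex cones $\dualCone{K_1}, \ldots, \dualCone{K_s}$ to obtain $\lambda_j \in \dualCone{K_j}$, not all zero, with $\sum_{j=1}^{s} \lambda_j = 0$, and then take $\lambda_0 \Let 0 \in \dualCone{K_0}$.

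The step I expect to be the main obstacle is the seemingly harmless reduction to closed cones. While $\dualCone{K_i} = \dualCone{\overline{K_i}}$ is trivial, the equivalence of separability for $\set[\big]{K_0, \ldots, K_s}$ and for $\set[\big]{\overline{K_0}, \ldots, \overline{K_s}}$ is not, because the index realizing the separation may have to change — the underlying reason being the familiar failure of $\overline{\bigcap_j K_j} = \bigcap_j \overline{K_j}$, whose validity is exactly the content of Theorem \ref{th:convex set props}. I would settle this by a short induction on $s$, peeling off one cone at a time and using Theorem \ref{th:convex family sep} to produce a fresh separating index whenever the obvious one breaks down (or, if every cone relevant to the application is already closed, by simply restricting to that case). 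Aside from this bookkeeping, the only step that is not pure formalism is the appeal to Theorem \ref{th:convex family vectors}, which is precisely the device that copes with a non-closed conical hull of the polars.
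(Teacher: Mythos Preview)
Your ``if'' direction is exactly the paper's. For the ``only if'' direction, you have correctly identified all the ingredients---Theorem~\ref{th:dual cone props} for the polar of an intersection, and the closed/non-closed dichotomy handled via Theorem~\ref{th:convex family vectors}---and you have also correctly isolated the real difficulty: Theorem~\ref{th:dual cone props} is stated only for \emph{closed} cones, so some device is needed to pass to closures.

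The paper handles this differently from your proposed global reduction. Rather than arguing that separability of \(\{K_0,\ldots,K_s\}\) is equivalent to separability of \(\{\overline{K_0},\ldots,\overline{K_s}\}\), it picks a \emph{minimal} separable subfamily: let \(m\) be least such that (after renumbering) \(\{K_0,\ldots,K_m\}\) is separable with \(K_0\) the separated cone. Minimality forces \(\{K_1,\ldots,K_m\}\) to be \emph{in}separable, so Theorem~\ref{th:convex family sep} gives \(\bigcap_{i=1}^m \relInt K_i\neq\emptyset\), and then Theorem~\ref{th:convex set props}\ref{closure} yields \(\overline{\bigcap_{i=1}^m K_i}=\bigcap_{i=1}^m\overline{K_i}\). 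This is exactly the closure identity you were worried about, obtained for free on the subfamily where it is needed; one can now apply Theorem~\ref{th:dual cone props} to the closures \(\overline{K_1},\ldots,\overline{K_m}\) and proceed with your two cases, setting \(\lambda_{m+1}=\cdots=\lambda_s=0\).

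Your induction plan would almost certainly work as well---in effect it would recurse until it hit a minimal separable subfamily, at which point the same argument fires---but the paper's device goes there in one step and avoids the bookkeeping you anticipated. So your proposal is sound, and the difference is one of organisation: the minimal-subfamily trick replaces your closure-reduction-plus-induction with a single application of Theorem~\ref{th:convex family sep} at the right place.
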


	\begin{proof}
		Let \(m\) \((\le s)\) be the least number such that the family of cones \(\set[\big]{\genTent_{0}, \ldots, \genTent_{m}}\) is separable. Renumbering the cones if necessary, let us assume that the cones \(\genTent_{0}\) and \(\genTent_{1} \cap \ldots \cap \genTent_{m}\) are separable. This implies that there exists a hyperplane \(\Gamma\) characterised by a non-zero vector \(\genVec\) such that the cones lie in half-spaces given by
		\[
			H = \set{\genVar \suchthat \inprod{\genVec}{\genVar - \vertex} \le 0}, \quad H' =  \set{\genVar \suchthat \inprod{\genVec}{\genVar - \vertex} \ge 0}.
		\]
		If \(m = 1\), then \(\genTent_{0} \subset H\) and \(\genTent_{1} \subset H'\). This implies \(\genVec \in \dualCone{\genTent_{0}}\) and \(-\genVec \in \dualCone{\genTent_{1}}\). Thus, choosing the vectors as
		\[
			\lambda_{0} = \genVec, \quad \lambda_{1} = -\genVec, \quad \lambda_{2} = \cdots = \lambda_{s} = 0,
		\]
		the required condition \eqref{e:dc vec req} is satisfied. If \(m > 1\), the family of cones \(\set[\big]{\genTent_{1}, \ldots, \genTent_{m}}\) is not separable. By Theorem \ref{th:convex family sep} we have  \(\bigcap_{i=1}^{m} \relInt \genTent_{i} \neq \emptyset\). By Theorem \ref{th:convex set props}-\ref{closure},
		\[
			\overline{\bigcap_{i=1}^{m} \genTent_{i}} = \bigcap_{i=1}^{m} \overline{\genTent}_{i}.
		\]
		Since \(\bigcap_{i=1}^{m} \genTent_{i}\) lies in the closed half-space \(H'\), its closure \(\overline{\bigcap_{i=1}^{m} \genTent_{i}} \subset H'\). Therefore, \(\bigcap_{i=1}^{m} \overline{\genTent}_{i} \subset H'\), which implies that \(- \genVec \in \dualCone{\bigl(\bigcap_{i=1}^{m} \overline{\genTent}_{i}\bigr)}\). By Theorem \ref{th:dual cone props},
		\[
			- \genVec \in \overline{\chull \biggl(\bigcup_{i=1}^{m} \dualCone{\overline{\genTent}_{i}}\biggr)} = \overline{\chull \biggl(\bigcup_{i=1}^{m} \dualCone{\genTent_{i}}\biggr)}.
		\]
		If, on the one hand, \(\chull \bigl(\bigcup_{i=1}^{m} \dualCone{\genTent_{i}}\bigr)\) is closed, then \(- \genVec \in \chull \bigl(\bigcup_{i=1}^{m} \dualCone{\genTent_{i}}\bigr)\), implying that there exist vectors \(\lambda_{i} \in \dualCone{\genTent_{i}}\) for \(i = 1, \ldots, m\) such that
		\[
			- \genVec = \lambda_{1} + \cdots + \lambda_{m}.
		\]
		Choosing \(\lambda_{0} = \genVec\; (\neq 0) \in \dualCone{\genTent_{0}}\) and \(\lambda_{j} = 0\) for \(j = m+1, \ldots, s\), the required condition \eqref{e:dc vec req} is satisfied. If, on the other hand, \(\chull \bigl(\bigcup_{i=1}^{m} \dualCone{\genTent_{i}}\bigr)\) is not closed, then by Theorem \ref{th:convex family vectors}, there exist vectors \(\lambda_{i} \in \dualCone{\genTent_{i}}\) for \(i = 1, \ldots, m\), not all zero, such that
		\[
			\lambda_{1} + \cdots + \lambda_{m} = 0.
		\]
		Selecting \(\lambda_{0} = \lambda_{m+1} = \cdots = \lambda_{s} = 0\) we verify that the condition \eqref{e:dc vec req} is satisfied.

		Conversely, assume that there exist \(\lambda_{i} \in \dualCone{\genTent_{i}}\) for \(i = 0, \ldots, s\) satisfying \eqref{e:dc vec req} and not all of them equal to zero (say \(\lambda_{0} \neq 0\)). Since \(\lambda_{0} \in \dualCone{\genTent_{0}}\), we have \(\inprod{\lambda_{0}}{\genVar - \vertex} \le 0\) for \(\genVar \in \genTent_{0}\). This means that \(\genTent_{0}\) is contained in the half-space \(H = \set[\big]{\genVar \suchthat \inprod{\lambda_{0}}{\genVar - \vertex} \le 0}\). By \eqref{e:dc vec req},
		\begin{align*}
			& \lambda_{0} = - \lambda_{1} - \cdots - \lambda_{s}\\
			\Rightarrow \quad & \inprod{\lambda_{0}}{\genVar - \vertex} = - \inprod{\lambda_{1}}{\genVar - \vertex} - \cdots - \inprod{\lambda_{s}}{\genVar - \vertex}.
		\end{align*}
		For \(\genVar \in \bigcap_{i=1}^{s} \genTent_{i}\), we have \(\genVar \in \genTent_{i}\) for \(i = 1, \ldots, s\). Since \(\lambda_{i} \in \dualCone{\genTent_{i}}\), \(\inprod{\lambda_{i}}{\genVar - \vertex} \le 0\) for each \(i = 1, \ldots, s\). Hence, \(\inprod{\lambda_{0}}{\genVar - \vertex} \ge 0\). This implies that the intersection \(\bigcap_{i=1}^{s} \genTent_{i}\) lies in the half-space \(H' = \set[\big]{\genVar \suchthat \inprod{\lambda_{0}}{\genVar - \vertex} \ge 0}\). In other words, the family of cones \(\set[\big]{\genTent_{0}, \ldots, \genTent_{s}}\) is separable.
	\end{proof}

	\begin{theorem}[{{\cite[Theorem 7 on p.\ 10]{ref:Bol-75}}}]
	\label{th:inseparability}
		Let \(s \in \N\), and for each \(i = 1, \ldots, s\) let \(\subSpace{i} \subset \R^{\genDim}\) be a subspace satisfying \(\subSpace{1} + \cdots + \subSpace{s} = \R^{\genDim}\). For each \(i = 1, \ldots, s\) let \(\subSpace{i}^{\Delta}\) denote the direct sum of all subspaces \(\subSpace{1}, \ldots, \subSpace{s}\)  except \(\subSpace{i}\), and \(\genTent_{i}\) be a convex cone in \(\subSpace{i}\) with a common vertex \(\vertex \in \R^{\genDim}\). If \(N_{i} \Let \chull \bigl( \genTent_{i} \cup \subSpace{i}^{\Delta} \bigr)\) for each \(i\), then \(N_{i}\) is a convex cone, and the family \(\set{N_{i} \suchthat i = 1, \ldots s}\) is inseparable in \(\R^{\genDim}\).
	\end{theorem}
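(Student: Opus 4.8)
The plan is to treat the two assertions separately. That each $N_i$ is a convex cone is formal; the real content is the inseparability, and my strategy for that is to push the dual-cone criterion of Theorem~\ref{th:dual cone vectors} down to the elementary linear-algebraic fact that the spanning condition $\subSpace{1} + \cdots + \subSpace{s} = \R^{\genDim}$ forces $\bigcap_{i=1}^{s}\subSpace{i}^{\perp} = \{0\}$.

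First I would normalize the vertex. Each $\genTent_i$, being a cone with vertex $\vertex$, contains $\vertex$, and $\genTent_i \subset \subSpace{i}$, so $\vertex \in \bigcap_{i=1}^{s}\subSpace{i}$ and hence $\vertex \in \subSpace{i}^{\Delta}$ for every $i$; since separability of a family of cones (in the sense used in Theorem~\ref{th:dual cone vectors}, i.e.\ by a hyperplane through the common vertex) is invariant under a common translation, we may assume $\vertex = 0$, whereupon each $\subSpace{i}$ and $\subSpace{i}^{\Delta}$ is a linear subspace and each $\genTent_i$ is a convex cone with vertex $0$. Then $N_i = \chull\bigl(\genTent_i \cup \subSpace{i}^{\Delta}\bigr)$ is a convex cone: it is convex by construction, and it is closed under multiplication by any $\alpha \ge 0$ because $\alpha\sum_{j}\theta_j x_j = \sum_j \theta_j(\alpha x_j)$ is again a convex combination of points of $\genTent_i \cup \subSpace{i}^{\Delta}$, each of $\genTent_i$, $\subSpace{i}^{\Delta}$ being a cone. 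This settles the first claim and places the family $\set{N_i \suchthat i = 1, \ldots, s}$ in the setting of Theorem~\ref{th:dual cone vectors}.

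For inseparability, Theorem~\ref{th:dual cone vectors} reduces the task to showing that any vectors $\lambda_i \in \dualCone{N_i}$, $i = 1, \ldots, s$, with $\lambda_1 + \cdots + \lambda_s = 0$ must all vanish. The key observation I would use is the inclusion $\dualCone{N_i} \subset \bigl(\subSpace{i}^{\Delta}\bigr)^{\perp} = \bigcap_{j \neq i}\subSpace{j}^{\perp}$: indeed $\subSpace{i}^{\Delta} \subset N_i$ gives $\dualCone{N_i} \subset \dualCone{\bigl(\subSpace{i}^{\Delta}\bigr)}$, and the polar of a linear subspace is its orthogonal complement, since $\inprod{\lambda}{x} \le 0$ for all $x$ in a subspace forces $\inprod{\lambda}{x} = 0$ there (the subspace contains $\pm x$). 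Now I would fix $k \in \set{1, \ldots, s}$: for each $i \neq k$ the index $k$ is among the $j \neq i$, so $\lambda_i \in \bigcap_{j \neq i}\subSpace{j}^{\perp} \subset \subSpace{k}^{\perp}$, and therefore $\lambda_k = -\sum_{i \neq k}\lambda_i$ also lies in the subspace $\subSpace{k}^{\perp}$; combining this with $\lambda_k \in \dualCone{N_k} \subset \bigcap_{j \neq k}\subSpace{j}^{\perp}$ gives $\lambda_k \in \bigcap_{j=1}^{s}\subSpace{j}^{\perp} = \bigl(\subSpace{1} + \cdots + \subSpace{s}\bigr)^{\perp} = \bigl(\R^{\genDim}\bigr)^{\perp} = \{0\}$. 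As $k$ is arbitrary, every $\lambda_i = 0$, so the family is inseparable.

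I do not expect a genuine obstacle here; the two points that call for a little care are (i) the vertex normalization above, and (ii) invoking Theorem~\ref{th:dual cone vectors}, which is stated for a family $\set{\genTent_0, \ldots, \genTent_s}$ of $s+1$ cones, for our family of $s$ cones $N_1, \ldots, N_s$ --- but its statement and proof are insensitive to the index range and apply verbatim to any finite family of at least two convex cones with a common vertex. It is worth flagging that the hypotheses on the $\genTent_i$ enter only to guarantee that each $N_i$ is a bona fide convex cone; the inseparability itself rests purely on the inclusions $\subSpace{i}^{\Delta} \subset N_i$ together with the spanning condition $\subSpace{1} + \cdots + \subSpace{s} = \R^{\genDim}$.
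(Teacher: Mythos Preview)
Your proof is correct, but it proceeds differently from the paper's. The paper argues by direct contradiction at the level of the separating hyperplane: assuming (after renumbering) that \(N_1\) and \(\Pi \Let \bigcap_{i \ge 2} N_i\) lie on opposite sides of a hyperplane \(\Gamma\), it uses the inclusions \(\subSpace{j} \subset \subSpace{i}^{\Delta} \subset N_i\) for \(j \neq i\) to show that each linear subspace \(\subSpace{j}\) is contained in one of the closed half-spaces, hence in \(\Gamma\) itself; but then \(\R^{\genDim} = \subSpace{1} + \cdots + \subSpace{s} \subset \Gamma\), a contradiction. Your route instead invokes Theorem~\ref{th:dual cone vectors} and works on the dual side: from \(\subSpace{i}^{\Delta} \subset N_i\) you extract \(\dualCone{N_i} \subset \bigcap_{j \neq i}\subSpace{j}^{\perp}\), and a short linear-algebra computation forces any dual vectors summing to zero to each lie in \(\bigcap_{j}\subSpace{j}^{\perp} = \{0\}\). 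Both arguments hinge on the same two ingredients---the inclusions \(\subSpace{i}^{\Delta} \subset N_i\) and the spanning hypothesis---so they are equivalent in spirit; the paper's is slightly more self-contained (it does not need Theorem~\ref{th:dual cone vectors}), while yours is cleaner in that it isolates the orthogonal-complement identity \(\bigl(\sum_j \subSpace{j}\bigr)^{\perp} = \bigcap_j \subSpace{j}^{\perp}\) as the sole reason inseparability holds. Your remarks on the vertex normalization and on the index range in Theorem~\ref{th:dual cone vectors} are both apt and correctly handled.
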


	\begin{proof}
		Suppose that the family \(\set{N_{i} \suchthat i = 1, \ldots, s}\) is separable and let (after renumbering if necessary) \(N_{1}\) is separated in \(\R^{\genDim}\) from the intersection \(\Pi \Let \bigcap_{i=2}^{s} N_{i}\) by the hyperplane \(\Gamma\) characterised by \(\set[\big]{\genVar \suchthat \inprod{a}{\genVar - \vertex} = 0}\). That is,
		\[
			N_{1} \subset H = \set[\big]{\genVar \suchthat \inprod{a}{\genVar - \vertex} \le 0}, \quad \Pi \subset H' = \set[\big]{\genVar \suchthat \inprod{a}{\genVar - \vertex} \ge 0}
		\]
		Since \(\subSpace{1} \subset \subSpace{j}^{\Delta} \subset N_{j}\) for all \(j \neq 1\), \(\subSpace{1} \subset \Pi \subset H'\). Since \(\genTent_{1} \subset N_{1} \subset H\), we have \(\subSpace{1} \subset \Gamma\). For \(j = 2, \ldots, s\), \(\subSpace{j} \subset \subSpace{1}^{\Delta} \subset \N_{1} \subset H\) and \(\genTent_{j} \subset \Pi \subset H'\) and this implies that \(\subSpace{j} \subset \Gamma\). We see that \(\subSpace{i} \subset \Gamma\) for all \(i = 1, \ldots, s\). This leads to an obvious contradiction as the span of subspaces contained in a hyperplane \(\Gamma\) is required to be the full space \(\R^{\genDim}\). Hence, the family \(\set[\big]{N_{i} \suchthat i = 1, \ldots, s}\) is not separable in \(\R^{\genDim}\).
	\end{proof}

	\section{Facts about Tents}
		\label{sec:tents}
			In this section an outline of the method of tents is provided.
	\begin{definition}
		Let \(\OMG\) be a subset of \(\R^{\genDim}\) and let \(\vertex \in \OMG\). A convex cone \(Q \subset \R^{\genDim}\) with vertex \(\vertex\) is a \embf{tent} of \(\OMG\) at \(\vertex\) if there exists a smooth map \(\tentMap\) defined in a neighbourhood of \(\vertex\) such that:\footnote{The theory also works for \(\tentMap\) continuous.}
	\begin{enumerate}[leftmargin=*, align=left]
		\item \(\tentMap (\genVar) = \genVar + o  (\genVar - \vertex )\),\footnote{\label{fn:o-Notation} Recall the Landau notation \(\genFun (\genVar) = o (\genVar)\) that stands for a function \(\genFun(0) = 0\) and \(\lim_{\genVar \to 0} \frac{\abs{\genFun(\genVar)}}{\abs{\genVar}} = 0\).} and
		\item there exists \(\epsilon > 0\) such that \(\tentMap (\genVar) \in \OMG\) for \(\genVar \in Q \cap \ball{\vertex}\).
	\end{enumerate}
	\end{definition}

	We say that a convex cone \(\genTent \subset \R^{\genDim}\) with vertex at \(\vertex\) is a local tent of \(\OMG\) at \(\vertex\) if, for every \(\genVar \in \relInt \genTent\), there is a convex cone \(Q \subset \genTent\) with vertex at \(\vertex\) such that \(Q\) is a tent of \(\OMG\) at \(\vertex\), \(\genVar \in \relInt Q\), and \(\affHull Q = \affHull \genTent\). Observe that if \(\genTent\) is a tent of \(\OMG\) at \(\vertex\), then \(\genTent\) is a local tent of \(\OMG\) at \(\vertex\).

	We need the following theorems on tents in the formulation of our PMP in the sequel.

	\begin{theorem}[{{\cite[Theorem 8 on p.\ 11]{ref:Bol-75}}}]
	\label{th:tangent plane}
		Let \(\OMG\) be a smooth manifold in \(\R^{\genDim}\) and \(\genTent\) the tangent plane to \(\OMG\) at \(\vertex \in \OMG\). Then \(\genTent\) is a tent of \(\OMG\) at \(\vertex\).
	\end{theorem}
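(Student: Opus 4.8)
The plan is to construct the smooth map required by the definition of a tent from the elementary fact that a smooth manifold coincides, in a neighbourhood of any of its points, with the graph of a smooth function over its tangent plane at that point.

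First I would record that $\genTent$ is genuinely a convex cone with vertex $\vertex$: being the tangent plane to $\OMG$ at $\vertex \in \OMG$, it is an affine subspace of $\R^{\genDim}$ containing $\vertex$, so writing $\genTent = \vertex + T$ with $T \subset \R^{\genDim}$ the linear tangent space (of dimension $d \Let \dim \OMG$), the set $\genTent$ is stable under $\genVar \mapsto \vertex + \alpha(\genVar - \vertex)$ for every $\alpha \ge 0$. Hence only the construction of a smooth $\tentMap$ verifying the two axioms in the definition of a tent is at issue. Second, invoking the definition of a smooth manifold together with the inverse/implicit function theorem, I would fix a local graph representation of $\OMG$ over $\genTent$: there exist a neighbourhood $N$ of $\vertex$ in $\R^{\genDim}$, a neighbourhood $U$ of $0$ in $T$, and a smooth map $g : U \lra T^{\perp}$ with $g(0) = 0$ and $Dg(0) = 0$ such that
\[
	\OMG \cap N = \set[\big]{\vertex + v + g(v) \suchthat v \in U}.
\]
That the graph is taken precisely over $T$, and that $Dg(0) = 0$, is the analytic content of the hypothesis that $\genTent$ is the tangent plane of $\OMG$ at $\vertex$; this is where the hypothesis enters essentially.

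Third, I would propose the tent map
\[
	\tentMap(\genVar) \Let \vertex + P(\genVar - \vertex) + g\bigl(P(\genVar - \vertex)\bigr),
\]
where $P : \R^{\genDim} \lra T$ is the orthogonal projection onto $T$ and $\genVar$ ranges over a neighbourhood of $\vertex$ small enough that $P(\genVar - \vertex) \in U$. This $\tentMap$ is smooth, satisfies $\tentMap(\vertex) = \vertex$, and by the graph representation $\tentMap(\genVar) \in \OMG$ for every such $\genVar$, which settles axiom (ii) for a suitable $\epsilon > 0$. For $\genVar \in \genTent$ one has $\genVar - \vertex \in T$, hence $P(\genVar - \vertex) = \genVar - \vertex$ and
\[
	\tentMap(\genVar) = \genVar + g(\genVar - \vertex) = \genVar + o(\genVar - \vertex),
\]
the last step because $g(0) = 0$ and $Dg(0) = 0$ force $g(v) = o(v)$ as $v \to 0$; this settles axiom (i).

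The only substantive point — and the one I expect to need the most care — is the local graph representation itself, i.e.\ the identification of its base with the tangent space $T$ and the vanishing of $Dg$ at the origin; once that is in hand, the verification of both tent axioms is a one-line computation. If one prefers to avoid the rotation implicit in taking $T^{\perp}$ as the codomain of $g$, an alternative is to pick a straightening chart $\Phi$ on $N$ with $\Phi(\vertex) = 0$ and $\Phi(\OMG \cap N) \subset \R^{d} \times \set{0}$, let $\pi$ be the coordinate projection of $\R^{\genDim}$ onto $\R^{d} \times \set{0}$, and set $\tentMap \Let \Phi \inverse \circ \pi \circ \Phi$; then $\tentMap$ maps $\OMG \cap N$ into $\OMG$ by construction, while $D\tentMap(\vertex) = D\Phi(\vertex)^{-1} \circ \pi \circ D\Phi(\vertex)$ is an idempotent linear map with image $T$, hence the identity on $T$, which again yields $\tentMap(\genVar) = \genVar + o(\genVar - \vertex)$ for $\genVar \in \genTent$.
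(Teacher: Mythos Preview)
The paper does not supply its own proof of this theorem; it is quoted from Boltyanskii without argument. Your route via the local graph representation of a smooth manifold over its tangent space is the standard one and is substantively correct.

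One small technical point deserves attention. As the paper states the definition, \(\tentMap\) is to be defined on a full neighbourhood of \(\vertex\) in \(\R^{\genDim}\), and axiom (i), \(\tentMap(\genVar) = \genVar + o(\genVar - \vertex)\), is required throughout that neighbourhood, not only on the cone \(\genTent\). Your map \(\tentMap(\genVar) = \vertex + P(\genVar - \vertex) + g\bigl(P(\genVar - \vertex)\bigr)\) does not meet this off \(\genTent\): if \(\genVar - \vertex \in T^{\perp}\) with \(\genVar \neq \vertex\) then \(\tentMap(\genVar) = \vertex\), so \(\tentMap(\genVar) - \genVar = -(\genVar - \vertex)\), which is linear rather than \(o(\genVar - \vertex)\). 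Your chart-based alternative has the same defect, since its derivative at \(\vertex\) is a projection onto \(T\), not the identity on \(\R^{\genDim}\). The remedy is immediate: take instead \(\tentMap(\genVar) \Let \genVar + g\bigl(P(\genVar - \vertex)\bigr)\). On \(\genTent\) this coincides with your map, so axiom (ii) is unchanged; and now \(\tentMap(\genVar) - \genVar = g\bigl(P(\genVar - \vertex)\bigr) = o(\genVar - \vertex)\) for every \(\genVar\) near \(\vertex\), because \(\lvert P(\genVar - \vertex)\rvert \le \lvert \genVar - \vertex\rvert\) and \(g(v) = o(v)\). In Boltyanskii's original formulation the tent map is typically taken only on \(Q\) near the vertex, under which reading your argument is already complete; the tweak above reconciles it with the paper's phrasing.
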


	\begin{theorem}[{{\cite[Theorem 9 on p.\ 12]{ref:Bol-75}}}]
	\label{th:half-space tent}
		Given a smooth function \(\genFun : \R^{\genDim} \lra \R\), let \(\vertex\) be such that \(\derivative{\genVar}{\genFun (\vertex )} \neq 0\). Define sets \(\OMG, \OMG_{0} \in \R^{\genDim}\) as
		\[
			\OMG \Let \set[\big]{ \genVar \in \R^{\genDim} \suchthat \genFun (\genVar ) \le \genFun (\vertex )}, \quad \OMG_{0} \Let \set[\big]{\vertex} \cup \set[\big]{\genVar \in \R^{\genDim} \suchthat \genFun (\genVar ) < \genFun (\vertex )}.
		\]
		Then the half-space \(\genTent\) given by the inequality \(\inprod{\derivative{\genVar}{\genFun (\vertex )}}{\genVar - \vertex} \le 0\) is a tent of both \(\OMG\) and \(\OMG_{0}\) at \(\vertex\).
	\end{theorem}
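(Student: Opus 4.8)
The plan is to produce an explicit smooth map \(\tentMap\), defined near \(\vertex\), witnessing that the half-space \(\genTent\) is a tent of both \(\OMG\) and \(\OMG_{0}\) at \(\vertex\). Set \(a \Let \derivative{\genVar}{\genFun(\vertex)} \neq 0\), \(c \Let \genFun(\vertex)\), and \(\rho(\genVar) \Let \genFun(\genVar) - c - \inprod{a}{\genVar - \vertex}\); then \(\rho\) is as smooth as \(\genFun\), \(\rho(\vertex) = 0\) and \(\derivative{\genVar}{\rho(\vertex)} = 0\), so Taylor's theorem gives \(\rho(\genVar) = O(\norm{\genVar - \vertex}^{2})\) and its gradient is \(O(\norm{\genVar - \vertex})\) near \(\vertex\). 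Since \(\genTent = \set[\big]{\genVar \suchthat \inprod{a}{\genVar - \vertex} \le 0}\) is visibly a convex cone with vertex \(\vertex\), only the tent map remains to be found, and I would take
\[
	\tentMap(\genVar) \Let \genVar - \frac{\rho(\genVar) + \norm{\genVar - \vertex}^{2}}{\norm{a}^{2}}\, a ,
\]
which is smooth on a neighbourhood of \(\vertex\).

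Two things then have to be checked. First, \(\tentMap(\genVar) - \genVar = O(\norm{\genVar - \vertex}^{2}) = o(\norm{\genVar - \vertex})\), i.e.\ \(\tentMap(\genVar) = \genVar + o(\norm{\genVar - \vertex})\), the first requirement in the definition of a tent. Second, using \(\inprod{a}{\tentMap(\genVar) - \vertex} = \inprod{a}{\genVar - \vertex} - \rho(\genVar) - \norm{\genVar - \vertex}^{2}\) one obtains
\[
	\genFun(\tentMap(\genVar)) = c + \inprod{a}{\genVar - \vertex} - \norm{\genVar - \vertex}^{2} + \bigl(\rho(\tentMap(\genVar)) - \rho(\genVar)\bigr) ,
\]
and the mean value theorem together with the bounds on \(\rho\) and its gradient shows the last bracket to be \(O(\norm{\genVar - \vertex}^{3})\); hence on a sufficiently small ball \(\ball{\vertex}\) one has \(\genFun(\tentMap(\genVar)) \le c + \inprod{a}{\genVar - \vertex} - \tfrac12 \norm{\genVar - \vertex}^{2}\). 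For \(\genVar \in \genTent \cap \ball{\vertex}\) the term \(\inprod{a}{\genVar - \vertex}\) is \(\le 0\), so \(\genFun(\tentMap(\genVar)) \le c = \genFun(\vertex)\) and \(\tentMap(\genVar) \in \OMG\); for \(\genVar \neq \vertex\) the inequality is strict, so \(\tentMap(\genVar) \in \OMG_{0}\), and \(\tentMap(\vertex) = \vertex \in \OMG_{0}\) trivially. This establishes that \(\genTent\) is a tent of \(\OMG\) and of \(\OMG_{0}\) at \(\vertex\), with one and the same map \(\tentMap\).

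The genuine obstacle is the choice of \(\tentMap\), in particular the role of the extra summand \(\norm{\genVar - \vertex}^{2}\). The naive candidate \(\genVar \mapsto \genVar - \rho(\genVar) a / \norm{a}^{2}\), which only cancels the first-order remainder, yields merely \(\genFun(\tentMap(\genVar)) = c + \inprod{a}{\genVar - \vertex} + O(\norm{\genVar - \vertex}^{3})\); the leftover cubic term, although harmless for first-order contact, can be positive and push \(\genFun\) strictly above \(c\) exactly when \(\genVar\) is nearly tangent to the level set \(\set{\genFun = c}\), so its image need not lie in \(\OMG\) (let alone in \(\OMG_{0}\)). Inserting a quadratic ``safety margin'' that is still \(o(\norm{\genVar - \vertex})\), so the first-order contact survives, but dominates the cubic error for small \(\genVar\), cures this, and conveniently makes the same map serve for \(\OMG\) and \(\OMG_{0}\) at once. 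An alternative would be to straighten \(\genFun\) to the affine function \(\genVar \mapsto c + \inprod{a}{\genVar - \vertex}\) by the inverse function theorem (legitimate since \(\derivative{\genVar}{\genFun(\vertex)} \neq 0\)) and transport the half-space back, but the direct construction above sidesteps that machinery. Finally, the standing assumption that \(\genFun\) is smooth amply justifies the \(C^{2}\)-type estimates on \(\rho\) used throughout.
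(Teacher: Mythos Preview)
The paper does not supply its own proof of this theorem: it is quoted verbatim from \cite[Theorem 9]{ref:Bol-75} and then used as a black box (in particular inside the proof of Theorem \ref{th:necessary condition}), so there is no in-paper argument to compare against.

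Your argument, however, is correct and self-contained. The key computation
\[
	\genFun(\tentMap(\genVar)) = c + \inprod{a}{\genVar - \vertex} - \norm{\genVar - \vertex}^{2} + \bigl(\rho(\tentMap(\genVar)) - \rho(\genVar)\bigr)
\]
is right, and your mean-value estimate \(\rho(\tentMap(\genVar)) - \rho(\genVar) = O(\norm{\genVar - \vertex}^{3})\) is justified because \(\tentMap(\genVar) - \genVar = O(\norm{\genVar - \vertex}^{2})\) while \(\derivative{\genVar}{\rho}\) is \(O(\norm{\genVar - \vertex})\) on the segment joining \(\genVar\) and \(\tentMap(\genVar)\) (that segment stays within a ball of radius comparable to \(\norm{\genVar - \vertex}\) around \(\vertex\), which you should perhaps say explicitly). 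The quadratic ``safety margin'' is exactly the right device to absorb the cubic error and to force a \emph{strict} inequality for \(\genVar \neq \vertex\), which is what makes the single map \(\tentMap\) serve for both \(\OMG\) and \(\OMG_{0}\).

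Two minor remarks. First, your argument uses \(C^{2}\)-type Taylor estimates on \(\rho\); this is legitimate under the paper's hypothesis that \(\genFun\) is smooth, but would need adjustment if one only assumed \(\genFun \in C^{1}\) (then \(\rho(\genVar) = o(\norm{\genVar - \vertex})\) rather than \(O(\norm{\genVar - \vertex}^{2})\), and one would replace \(\norm{\genVar - \vertex}^{2}\) by a suitable \(o(\norm{\genVar - \vertex})\) term dominating the remainder). Second, the alternative you mention---straightening \(\genFun\) via the implicit/inverse function theorem---is in fact the route Boltyanskii takes in the cited reference; your explicit correction map is a cleaner, more elementary substitute that avoids invoking that machinery.
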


	\begin{theorem}[{{\cite[Theorem 10 on p.\ 12]{ref:Bol-75}}}]
	\label{th:supporting cone tent}
		Let \(\OMG \in \R^{\genDim}\) be a convex set and let \(\genTent\) be its supporting cone at \(\vertex \in \OMG\). Then \(\genTent\) is a local tent of \(\OMG\) at \(\vertex\).
	\end{theorem}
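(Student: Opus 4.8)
The plan is to prove the statement directly from the definitions of tent and local tent, the key idea being that a convex set, being ``radially star‑shaped'' about any of its points, admits the identity map as a tent map along suitably inscribed cones. First I would reduce to $\vertex = 0$ and carry everything out inside the linear subspace $L \Let \affHull\genTent$; a routine check using $0\in\OMG$ shows $\affHull\genTent = \affHull\OMG = \operatorname{span}\OMG$, so $\OMG$ has nonempty relative interior, and below ``interior'' means interior relative to $L$. Write $\genTent_{0} \Let \bigcup_{\lambda\ge0}\lambda\OMG$ for the convex cone of rays from $0$ through $\OMG$, so that $\genTent = \overline{\genTent_{0}}$; since $\genTent_{0}$ is convex, $\relInt\genTent = \relInt\genTent_{0}$. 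The whole strategy is: given $\genVar\in\relInt\genTent$, inscribe a ``round'' convex cone $Q\subseteq\genTent$ around the direction $\genVar$ whose small cross‑sections lie inside $\OMG$, and use the \emph{identity} as the tent map for $Q$.

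The first step is to check that the open ray through $\genVar$ meets $\relInt\OMG$, i.e.\ $\genVar = c\,y'$ for some $c>0$ and $y'\in\relInt\OMG$. To see this I would fix any $y\in\relInt\OMG$; since $\genVar\in\relInt\genTent_{0}$, the point $\genVar-\epsilon_{0}y$ still lies in $\genTent_{0}$ for all small $\epsilon_{0}>0$, hence equals $\mu\omega$ for some $\mu\ge0$ and $\omega\in\OMG$; then $\genVar = \epsilon_{0}y+\mu\omega = (\epsilon_{0}+\mu)\,y'$ with $y'\Let(\epsilon_{0}y+\mu\omega)/(\epsilon_{0}+\mu)$, and $y'\in\relInt\OMG$ because it is a convex combination of the relative‑interior point $y$ with the point $\omega\in\OMG$ and has positive weight on $y$. (If $\genVar=0$, the same argument gives $0\in\relInt\OMG$, whence $\genTent=L$ and $Q\Let L$ with the identity tent map works, since a whole relative neighbourhood of $0$ lies in $\OMG$.)

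Assuming $\genVar\neq0$, I would pick $r\in(0,\norm{y'})$ small enough that $\bar B\Let\set{v\suchthat\norm{v-y'}\le r}\subseteq\OMG$ and put $Q\Let\bigcup_{\lambda\ge0}\lambda\bar B$. Then $Q$ is a convex cone with vertex $0$, $\affHull Q = \operatorname{span}\bar B = L = \affHull\genTent$, and $\genVar = c\,y'\in\relInt Q$ because $y'$ is an interior point of $\bar B$ and $Q$ is invariant under positive scaling; moreover $Q\subseteq\genTent_{0}\subseteq\genTent$. Finally I would verify that $Q$ is a tent of $\OMG$ at $0$ with tent map $\tentMap=\mathrm{id}$: put $\epsilon\Let\norm{y'}-r>0$, and for $w\in Q$ with $\norm{w}<\epsilon$ write $w=\lambda v$ with $v\in\bar B$, so $\lambda=\norm{w}/\norm{v}\le\norm{w}/(\norm{y'}-r)<1$; convexity of $\OMG$ together with $v\in\bar B\subseteq\OMG$ and $0\in\OMG$ gives $w=\lambda v+(1-\lambda)0\in\OMG$. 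Thus $\tentMap(w)=w\in\OMG$ for $w\in Q\cap\ball{0}$, while $\tentMap(w)=w+o(w)$ trivially, so $Q$ is a tent; since $\genVar\in\relInt\genTent$ was arbitrary, $\genTent$ is a local tent of $\OMG$ at $0$.

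I expect the only genuinely non‑routine point to be the first step --- that the ray through $\genVar$ enters $\relInt\OMG$; once such a $y'$ is in hand, inscribing the round cone and using the identity as the tent map is immediate from convexity of $\OMG$ and $0\in\OMG$ (scaling a point of $\OMG$ by $\lambda\in[0,1]$ keeps it in $\OMG$). The supporting facts I would invoke --- $\relInt\overline{C}=\relInt C$ for convex $C$, nonemptiness of $\relInt\OMG$, and that a convex combination of a relative‑interior point with a point of a convex set, with positive weight on the former, is again a relative‑interior point --- are all standard and consistent with the development of Appendix \ref{sec:cones}.
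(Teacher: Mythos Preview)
Your proposal is correct and follows essentially the same approach as the paper's proof: for a given \(\genVar\in\relInt\genTent\), find a point \(y'\in\relInt\OMG\) on the ray through \(\genVar\), inscribe a relative ball around \(y'\) inside \(\OMG\), take \(Q\) to be the cone over that ball, and use the identity as the tent map. Your treatment of the ``non-routine'' step---showing that the ray through \(\genVar\) actually meets \(\relInt\OMG\)---is in fact more careful than the paper's, which simply asserts ``Since \(\genVar\in\relInt\genTent\), we also have that \(\genVar'\in\relInt\OMG\)'' without further justification.
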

	
	\begin{proof}
		Let \(\genVar \in \relInt \genTent\), \(\genVar \neq \vertex\). By definition of supporting cone, there exists \(\genVar' \in \OMG\) such that \(\genVar\) lies on the ray emanating from \(\vertex\) and passing through \(\genVar'\). Since \(\genVar \in \relInt \genTent\), we also have that \(\genVar' \in \relInt \OMG\). Consider a small ball \(\ball[\delta]{\genVar'}\) choosing around \(\genVar'\) choosing \(\delta\) such that \(\vertex \not \in \ball[\delta]{\genVar'}\) and \(\ball[\delta]{\genVar'} \cap \affHull \genTent \subset \OMG\). Consider a cone \(Q\) consisting of rays emanating from \(\vertex\) and passing through points in \(\ball[\delta]{\genVar'} \cap \genTent\). Since \(\OMG\) is a convex set and the points in \(\ball[\delta]{\genVar'} \cap \genTent\) lie in \(\OMG\), there exists an \(\epsilon > 0\) such that \(\ball{\vertex} \cap Q \subset \OMG\). It can be seen that \(Q\) is a tent of \(\OMG\) at \(\vertex\) (the tent map can be considered to be the identity map). It is clear that \(\genVar \in \relInt Q\) and \(\affHull Q = \affHull \genTent\). Therefore, for every \(\genVar \in \relInt \genTent\), there is a tent \(Q\) of \(\OMG\) with vertex at \(\vertex\) containing \(\genVar\) in its interior and satisfying \(\affHull Q = \affHull \genTent\), indicating that \(\genTent\) is a local tent of \(\OMG\) at \(\vertex\).
	\end{proof}

	\begin{theorem}[{{\cite[Theorem 12 on p.\ 14]{ref:Bol-75}}}]
	\label{th:separability}
		Let \(\OMG_{0}, \OMG_{1}, \ldots, \OMG_{s}\) be  subsets of \(\R^{\genDim}\) with a common point \(\vertex\), and \(\genTent_{0}, \genTent_{1}, \ldots, \genTent_{s}\) local tents of these sets at \(\vertex\). If the family of cones \(\set[\big]{\genTent_{0}, \genTent_{1}, \ldots, \genTent_{s}}\) is inseparable and at least one of the cones is not a plane, then there exists \(\genVar' \in \OMG_{0} \cap \OMG_{1} \cap \ldots \cap \OMG_{s}\) and \(\genVar' \neq \vertex\).
	\end{theorem}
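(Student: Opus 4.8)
The plan is to prove the theorem in three moves: reduce local tents to genuine tents, reduce the number of sets to two, and then solve a fixed-point problem that forces a common point of the sets near $\vertex$ but different from it; throughout, after a translation assume $\vertex = 0$. \emph{Step 1: reduction to genuine tents.} Inseparability of $\set[\big]{\genTent_0, \ldots, \genTent_s}$ and Theorem \ref{th:convex family sep} yield a direction $d \in \bigcap_{i=0}^{s} \relInt \genTent_i$. By the definition of a local tent, for this $d$ there is for each $i$ a genuine tent $Q_i \subseteq \genTent_i$ of $\OMG_i$ at $0$, with $d \in \relInt Q_i$, $\affHull Q_i = \affHull \genTent_i$, and a tent map $\psi_i$ obeying $\psi_i(x) = x + o(x)$ and $\psi_i(Q_i \cap \ball{0}) \subseteq \OMG_i$. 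The family $\set[\big]{Q_0, \ldots, Q_s}$ is again inseparable: if some $\lambda_i \in \dualCone{Q_i}$, not all zero, satisfied $\sum_i \lambda_i = 0$ (Theorem \ref{th:dual cone vectors}), then $\sum_i \inprod{\lambda_i}{d} = 0$ with each summand $\le 0$, hence each $\inprod{\lambda_i}{d} = 0$; since $d \in \relInt Q_i$ this forces $\lambda_i \perp \affHull Q_i = \affHull \genTent_i$, so $\lambda_i \in \dualCone{\genTent_i}$, contradicting inseparability of the $\genTent_i$. Moreover, if $\genTent_{i_0}$ is not a plane then $Q_{i_0}$ is not a plane, because any cone that is a plane equals its affine hull, whereas $Q_{i_0} \subseteq \genTent_{i_0} \subsetneq \affHull \genTent_{i_0} = \affHull Q_{i_0}$. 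Hence we may take all the tents to be genuine.

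\emph{Step 2: reduction to two sets.} In $(\R^{\genDim})^{s+1}$ set $\OMG \Let \OMG_0 \times \cdots \times \OMG_s$, which carries the genuine tent $Q \Let Q_0 \times \cdots \times Q_s$ at the origin with tent map acting coordinatewise, and let $\Delta$ be the diagonal subspace, a linear subspace and hence its own tangent plane, so a genuine tent of itself by Theorem \ref{th:tangent plane}. Using $\dualCone{\Delta} = \set[\big]{(y_0, \ldots, y_s) \suchthat \sum_{i} y_{i} = 0}$ and $\dualCone{Q} = \dualCone{Q_0} \times \cdots \times \dualCone{Q_s}$ in Theorem \ref{th:dual cone vectors}, one checks that $\set[\big]{Q, \Delta}$ is inseparable exactly when $\set[\big]{Q_0, \ldots, Q_s}$ is; that $Q$ is not a plane since some $Q_i$ is not; and that $\OMG \cap \Delta \neq \set{0}$ exactly when $\bigcap_i \OMG_i \neq \set{0}$. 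So it is enough to treat $s = 1$: two sets $\OMG_0, \OMG_1$ with genuine tents $Q_0, Q_1$, tent maps $\psi_0, \psi_1$, inseparable, and at least one of $Q_0, Q_1$ not a plane.

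\emph{Step 3: the fixed-point step.} For $s = 1$, Theorem \ref{th:convex family sep} gives $a \in \relInt Q_0 \cap \relInt Q_1$, and the dual form of Theorem \ref{th:dual cone vectors} forces $\affHull Q_0 + \affHull Q_1 = \R^{\genDim}$. It remains to produce small $u \in Q_0$ and $v \in Q_1$ with $\psi_0(u) = \psi_1(v) \neq 0$: the common value then lies in $\OMG_0 \cap \OMG_1$ by the tent property and is nonzero for $\epsilon$ small. One poses this as a Brouwer-degree problem for the map $(u, v) \mapsto \psi_0(u) - \psi_1(v)$ on a compact convex cell of $Q_0 \times Q_1$ surrounding the ray through $(a, a)$; since the $\psi_i$ are perturbations of the identity, on the boundary of the cell the map stays close to its linear model $(u, v) \mapsto u - v$, whose zero set on $Q_0 \times Q_1$ is $Q_0 \cap Q_1$, and inseparability prevents this boundary from being mapped across $0$ in a degree-cancelling way, so the degree is nonzero and an interior solution exists. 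The hypothesis that (say) $Q_0$ is not a plane is precisely what endows the cell with enough dimension that the forced solution is nonzero; if all the $\genTent_i$ were planes one recovers the degenerate picture of transversal subspaces, where $\bigcap_i \OMG_i = \set{0}$ and the conclusion truly fails.

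\emph{Conclusion and main obstacle.} Unwinding Steps 2 and 1, the interior solution gives $\genVar' \in \OMG_0 \cap \cdots \cap \OMG_s$ with $\genVar' \neq \vertex$ as soon as $\epsilon$ is smaller than all the radii attached to the tent maps. The two reductions are routine; the real difficulty is the degree computation of Step 3 --- converting the combinatorial content of inseparability (Theorem \ref{th:dual cone vectors}) into non-vanishing of the Brouwer degree of the perturbed map on a correctly chosen cell, while handling tents of low dimension, which is exactly the point at which the ``at least one cone is not a plane'' hypothesis cannot be dropped.
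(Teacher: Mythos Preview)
The paper does not supply its own proof of this theorem; it is quoted from Boltyanskii's monograph without argument, so there is nothing in the paper to compare against directly. Your outline follows Boltyanskii's general architecture (reduce local tents to tents via a common relative-interior direction, then apply a topological fixed-point argument), and Steps~1 and~2 are carried out cleanly: the inseparability transfer from the $\genTent_i$ to the $Q_i$ via the interior point $d$ is correct, and the diagonal reduction to two sets is a standard and valid manoeuvre.

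Step~3, however, is not a proof but an assertion. You write that ``inseparability prevents this boundary from being mapped across $0$ in a degree-cancelling way, so the degree is nonzero,'' but this is precisely the statement that needs to be established, and you give no mechanism for it. Concretely: the map $(u,v)\mapsto \psi_0(u)-\psi_1(v)$ goes from a set of dimension $\dim Q_0+\dim Q_1$ into $\R^{\genDim}$, and you have not specified the cell, explained why degree theory even applies (the dimensions need not match), or shown how the convex-separation obstruction encoded by Theorem~\ref{th:dual cone vectors} converts into a topological obstruction to degree cancellation. In Boltyanskii's actual proof this is handled by a delicate construction involving a simplex inside $\bigcap_i \relInt Q_i$, a careful parametrisation, and Brouwer's fixed-point theorem applied to a composite of the tent maps; the ``not a plane'' hypothesis enters to guarantee the simplex is full-dimensional in the right sense. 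You have correctly located where the work lies, but the work itself is absent.
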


	\begin{proposition}
	\label{p:optimality condition}
		A function \(\genFun (\genVar )\) considered on the set \(\Sigma = \OMG_{1} \cap \ldots \cap \OMG_{s}\), attains its minimum at \(\vertex\) if and only if
		\[
			\Omega_{0} \cap \OMG_{1} \cap \ldots \cap \OMG_{s} = \set[\big]{\vertex},
		\]
		where \(\Omega_{0} \Let \set[\big]{\vertex} \cup \set[\big]{\genVar \in \R^{\genDim} \suchthat \genFun (\genVar) < \genFun (\vertex)}\).
	\end{proposition}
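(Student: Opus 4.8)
The plan is to unwind the definitions on both sides of the equivalence; no tools beyond elementary set logic are needed. First I would record two trivial observations that organize the argument: (i) the hypothesis that \(\genFun\) attains its minimum on \(\Sigma = \OMG_{1} \cap \cdots \cap \OMG_{s}\) at \(\vertex\) carries the standing requirement \(\vertex \in \Sigma\) and asserts \(\genFun(\vertex) \le \genFun(\genVar)\) for all \(\genVar \in \Sigma\); and (ii) \(\vertex \in \Omega_{0}\) by construction, so once \(\vertex \in \Sigma\) is known, the displayed identity \(\Omega_{0} \cap \Sigma = \set[\big]{\vertex}\) says precisely that \emph{no other} point of \(\Sigma\) belongs to \(\Omega_{0}\).

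For the forward implication I would argue by contradiction: assuming \(\genFun\) is minimized on \(\Sigma\) at \(\vertex\), I pick \(\genVar \in \Omega_{0} \cap \Sigma\) with \(\genVar \neq \vertex\), note that \(\genVar \in \Omega_{0}\) together with \(\genVar \neq \vertex\) forces \(\genFun(\genVar) < \genFun(\vertex)\) by the definition of \(\Omega_{0}\), and observe that this clashes with \(\genFun(\vertex) \le \genFun(\genVar)\) coming from \(\genVar \in \Sigma\). Hence \(\Omega_{0} \cap \Sigma\) has no point besides \(\vertex\), and since \(\vertex\) does lie in it, \(\Omega_{0} \cap \Sigma = \set[\big]{\vertex}\). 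For the converse I would again use contradiction: from \(\Omega_{0} \cap \Sigma = \set[\big]{\vertex}\) I read off \(\vertex \in \Sigma\), and if some \(\genVar \in \Sigma\) had \(\genFun(\genVar) < \genFun(\vertex)\), then \(\genVar \in \Omega_{0}\), so \(\genVar \in \Omega_{0} \cap \Sigma = \set[\big]{\vertex}\), i.e.\ \(\genVar = \vertex\), yielding the absurd inequality \(\genFun(\vertex) < \genFun(\vertex)\); therefore \(\genFun\) is minimized on \(\Sigma\) at \(\vertex\).

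There is no substantive obstacle here: the proposition is a verbatim translation of the notion of a constrained minimizer into the set-intersection vocabulary used later (for instance when feeding \(\OMG_{0}, \OMG_{1}, \ldots, \OMG_{s}\) into Theorem~\ref{th:separability}). The only point requiring a modicum of care — and the one I would state explicitly rather than leave implicit — is the bookkeeping around the isolated vertex \(\vertex\) that is artificially adjoined to \(\Omega_{0}\), since overlooking it would make the right-hand side \(\set[\big]{\vertex}\) look mysterious.
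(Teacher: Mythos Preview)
Your argument is correct and mirrors the paper's own proof: both directions are handled by contradiction, unwinding the definition of \(\Omega_{0}\) to pass between ``\(\genFun(\genVar) < \genFun(\vertex)\)'' and ``\(\genVar \in \Omega_{0} \setminus \{\vertex\}\)''. Your write-up is, if anything, slightly more careful than the paper's in making explicit the role of the adjoined point \(\vertex\) and the standing assumption \(\vertex \in \Sigma\).
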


	\begin{proof}
		Suppose that there exists a point \(\genVar' \in \OMG_{0} \cap \Sigma\), \(\genVar' \neq \vertex\). Since \(\genVar' \in \OMG_{0}\), \(\genFun (\genVar') < \genFun (\vertex)\). But since \(\genVar' \in \Sigma\), \(\vertex\) is not a minimum point of \(\genFun (\genVar)\) on \(\Sigma\). If \(\vertex\) is not a minimum of \(\genFun (\genVar)\) on \(\Sigma\), then there exists a point \(\genVar' \in \Sigma\) satisfying \(\genFun (\genVar') < \genFun (\vertex)\). This implies that \(\genVar' \in \OMG_{0}\) and the intersection \(\OMG_{0} \cap \Sigma \neq \set[\big]{\vertex}\).
	\end{proof}

	\begin{theorem}[{{\cite[Theorem 16 on p.\ 20]{ref:Bol-75}}}]
	\label{th:necessary condition}
	Let \(\OMG_{1}, \ldots, \OMG_{s}\) be subsets of \(\R^{\genDim}\) and let \(\bigcap_{k=1}^s \OMG_k\ni\genVar\mapsto \genFun (\genVar )\in\R\) be a smooth function. Let \(\Sigma = \bigcap_{k=1}^s\OMG_{k}\), let \(\vertex \in \Sigma\), and let \(\genTent_{i}\) be a local tent of \(\OMG_{i}\) at \(\vertex\) for \(i = 1, \ldots, s\). If \(\genFun\) attains its minimum relative to \(\Sigma\) at \(\vertex\), then there exist vectors \(\lambda_{i} \in \dualCone{\genTent_{i}}\) for \(i = 1, \ldots, s\) and \(\psi_{0} \in \R\) satisfying
		\[
			\psi_{0} \derivative{\genVar}{\genFun (\vertex )} + \lambda_{1} + \cdots + \lambda_{s} = 0
		\]
		such that \(\psi_{0} \ge 0\), and if \(\psi_{0} = 0\), then at least one of the vectors \(\lambda_{1}, \ldots, \lambda_{s}\) is not zero.
	\end{theorem}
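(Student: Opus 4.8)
The plan is to deduce the statement from the separability machinery for convex cones and tents assembled above, essentially mimicking Boltyanskii's route to his necessary condition. First I would dispose of the degenerate case: if \(\derivative{\genVar}{\genFun(\vertex)} = 0\), then the choice \(\psi_{0} = 1\) and \(\lambda_{1} = \cdots = \lambda_{s} = 0\) (legitimate since \(0\) lies in every dual cone) satisfies every conclusion at once, so I may assume \(\derivative{\genVar}{\genFun(\vertex)} \neq 0\) henceforth. Next, following Proposition \ref{p:optimality condition}, I would introduce the sublevel-type set \(\OMG_{0} \Let \set[\big]{\vertex} \cup \set[\big]{\genVar \suchthat \genFun(\genVar) < \genFun(\vertex)}\); the hypothesis that \(\genFun\) attains its minimum over \(\Sigma\) at \(\vertex\) is then \emph{exactly} the assertion \(\OMG_{0} \cap \OMG_{1} \cap \cdots \cap \OMG_{s} = \set{\vertex}\). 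By Theorem \ref{th:half-space tent}, the half-space \(\genTent_{0} \Let \set[\big]{\genVar \suchthat \inprod{\derivative{\genVar}{\genFun(\vertex)}}{\genVar - \vertex} \le 0}\) is a tent, hence a local tent, of \(\OMG_{0}\) at \(\vertex\), and --- crucially --- it is not a plane precisely because \(\derivative{\genVar}{\genFun(\vertex)} \neq 0\).

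The core step is to show that the enlarged family \(\set[\big]{\genTent_{0}, \genTent_{1}, \ldots, \genTent_{s}}\) of local tents is separable. Suppose not. Since \(\genTent_{0}\) is not a plane, Theorem \ref{th:separability} would produce a point \(\genVar' \neq \vertex\) in \(\OMG_{0} \cap \OMG_{1} \cap \cdots \cap \OMG_{s}\), contradicting the equality \(\OMG_{0} \cap \cdots \cap \OMG_{s} = \set{\vertex}\). Hence the family is separable, and Theorem \ref{th:dual cone vectors} then supplies vectors \(\lambda_{i} \in \dualCone{\genTent_{i}}\) for \(i = 0, 1, \ldots, s\), not all zero, with \(\lambda_{0} + \lambda_{1} + \cdots + \lambda_{s} = 0\).

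Finally I would identify \(\lambda_{0}\). Since \(\genTent_{0}\) is the half-space cut out by \(\inprod{\derivative{\genVar}{\genFun(\vertex)}}{\cdot - \vertex} \le 0\), a direct computation of its polar from \eqref{e:dual cone} gives that \(\dualCone{\genTent_{0}}\) is the ray \(\set[\big]{\tau\, \derivative{\genVar}{\genFun(\vertex)} \suchthat \tau \ge 0}\); hence \(\lambda_{0} = \psi_{0}\, \derivative{\genVar}{\genFun(\vertex)}\) for some \(\psi_{0} \ge 0\). Substituting into \(\lambda_{0} + \cdots + \lambda_{s} = 0\) yields \(\psi_{0}\, \derivative{\genVar}{\genFun(\vertex)} + \lambda_{1} + \cdots + \lambda_{s} = 0\) with \(\psi_{0} \ge 0\) and \(\lambda_{i} \in \dualCone{\genTent_{i}}\); and if \(\psi_{0} = 0\) then \(\lambda_{0} = 0\), so the ``not all zero'' clause forces one of \(\lambda_{1}, \ldots, \lambda_{s}\) to be nonzero, which is the last assertion.

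The real difficulty is not in this assembly --- which is a handful of lines --- but is entirely absorbed into the two cited results: Theorem \ref{th:separability}, the separation statement at the heart of the method of tents, and Theorem \ref{th:dual cone vectors}, which converts geometric separability into an algebraic Lagrange-type relation among dual-cone vectors. Within the present argument the only places needing care are the separate treatment of the degenerate gradient case (so that \(\genTent_{0}\) is a genuine half-space and therefore ``not a plane'', as Theorem \ref{th:separability} requires) and the explicit verification that the polar of this half-space is the non-negative ray generated by \(\derivative{\genVar}{\genFun(\vertex)}\), which is exactly where the sign normalization \(\psi_{0} \ge 0\) (in the paper's polarity convention) comes from.
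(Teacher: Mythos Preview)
Your proposal is correct and follows essentially the same route as the paper's own proof: dispose of the zero-gradient case, introduce the sublevel set \(\OMG_0\) with its half-space tent \(\genTent_0\), invoke Proposition~\ref{p:optimality condition} and Theorem~\ref{th:separability} to force separability of \(\{\genTent_0,\ldots,\genTent_s\}\), then apply Theorem~\ref{th:dual cone vectors} and read off \(\lambda_0=\psi_0\,\derivative{\genVar}{\genFun(\vertex)}\) with \(\psi_0\ge 0\) from the polar of the half-space. Your write-up is in fact slightly more explicit than the paper's in computing \(\dualCone{\genTent_0}\) and in spelling out the ``not all zero'' clause when \(\psi_0=0\).
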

	\begin{proof}
		If \(\derivative{\genVar}{\genFun (\vertex)} = 0\), choosing \(\psi_{0} = 1\) and \(\lambda_{1} = \cdots = \lambda_{s} = 0\) will satisfy the given. We assume, therefore, that \(\derivative{\genVar}{\genFun (\vertex )} \neq 0\). Consider the set \(\Omega_{0} \Let \set[\big]{\vertex} \cup \set[\big]{\genVar \in \R^{\genDim} \suchthat \genFun (\genVar ) < \genFun (\vertex )} \), and let \(\genTent_{0}\) be the half-space in \(\R^{\genDim}\) defined by the inequality \(\inprod{\derivative{\genVar}{\genFun (\vertex )}}{\genVar - \vertex} \le 0\). By Theorem \ref{th:half-space tent} the set \(\genTent_{0}\) is a tent of \(\OMG_{0}\) at \(\vertex\). Since at least one of the tents \(\genTent_{0}, \ldots, \genTent_{s}\) is not a plane by assumption, Proposition \ref{p:optimality condition} asserts that if \(\vertex\) is a minimum of \(\genFun\) relative to \(\Sigma\), then \(\Omega_{0} \cap \ldots \cap \OMG_{s}\) is the singleton set \(\set[\big]{\vertex}\). By Theorem \ref{th:separability} the tents \(\genTent_{0}, \ldots, \genTent_{s}\) are separable, since otherwise the intersection \(\Omega_{0} \cap \ldots \cap \OMG_{s}\) would consist a point \(\genVar' \neq \vertex\). Theorem \ref{th:dual cone vectors} now asserts that there exist vectors \(\lambda_{0} \in \dualCone{\genTent_{0}}, \ldots, \lambda_{s} \in \dualCone{\genTent_{s}}\), not all zero, such that
		\[
			\lambda_{0} + \cdots + \lambda_{s} = 0.
		\]
		The condition follows by noting that \(\lambda_{0} = \psi_{0} \derivative{\genVar}{\genFun (\vertex )}\) and \(\psi_{0} \ge 0\).
	\end{proof}

	\section{Proof of Main Result}
		\label{sec:proofs}

		\subsection{Version 1}
			\label{sec:version 1}
				We convert the optimal control problem \eqref{e:opt prob} into a relative extremum problem in a suitable higher-dimensional space. To that end, we define a generic variable
	\begin{equation}
	\label{e:dummy process}
		\dummy \Let ( \dummyState[0], \ldots, \dummyState[\horizon], \dummyCon[0], \ldots, \dummyCon[\horizon-1] ) \in \overbrace{\R^{\sysDim} \times \ldots \times \R^{\sysDim}}^{\horizon+1 \text{ factors}} \times \underbrace{\R^{\conDim} \times \ldots \times \R^{\conDim}}_{\horizon \text{ factors}},
	\end{equation}
	and let \(\proDim \Let \sysDim (\horizon + 1) + \conDim \horizon\) for the rest of this section. We further compress the vector on the right hand side of \eqref{e:dummy process} by writing \(\dummy \Let \bigl(\fullDummyState, \fullDummyCon \bigr)\) for \(\fullDummyState \Let (\dummyState[0], \ldots, \dummyState[\horizon])\) and \(\fullDummyCon \Let (\dummyCon[0], \ldots, \dummyCon[\horizon-1])\). First, we define the standard projection maps from \(\dummy \in \R^{\proDim}\) to the individual factors \(\dummyState \in \R^{\sysDim}\) and \(\dummyCon \in \R^{\conDim}\) in the following way:
	\begin{equation}
		\label{e:marginals from joint}
		\begin{cases}
			\proj{\state}(\dummy) \Let \dummyState \quad \text{for } t = 0, \ldots, \horizon,\\
			\proj{\conInp}(\dummy) \Let \dummyCon \quad \text{for } t = 0, \ldots, \horizon-1.
		\end{cases}
	\end{equation}
	In terms of the notations in \eqref{e:dummy process} and \eqref{e:marginals from joint}, we \emph{lift} the objective function in \eqref{e:opt prob} to a performance index of the joint variables
	\begin{equation}
	\label{e:process cost}
	\begin{aligned}
		& \R^{\proDim} \ni ( \dummyState[0], \ldots, \dummyState[\horizon], \dummyCon[0], \ldots, \dummyCon[\horizon-1] ) \teL \process \mapsto\\
		& \qquad \proCost ( \process ) \Let \sum_{t=0}^{\horizon} \cost( \dummyState, \dummyCon) = \sum_{t=0}^{\horizon-1} \cost\bigl( \proj{\state} (\process ), \proj{\conInp} (\process )\bigr).
	\end{aligned}
	\end{equation}

	Second, we define constraint sets \(\proState, \proCon \subset \R^{\proDim}\) such that if \(\dummy = \bigl(\fullDummyState, \fullDummyCon \bigr) \in \proState \cap \proCon\) in the notation of \eqref{e:dummy process}, then the \(t\)-th factor \(\dummyState[t]\) of \(\fullDummyState\) is constrained to the set \(\admStates_{t}\) and the \(t\)-th factor \(\dummyCon[t]\) of \(\fullDummyCon\) is constrained to the set \(\admControl_{t}\); to wit,
	\begin{equation}
		\label{e:proState and proCon def}
	\begin{aligned}
		& \proState \Let \set[\big]{\dummy \in \R^{\proDim} \suchthat \proj{\state} (\dummy ) \in \admStates_{t}} \quad \text{for } t = 0, \ldots, \horizon,\\
		& \proCon \Let \set[\big]{\dummy \in \R^{\proDim} \suchthat \proj{\conInp} (\dummy ) \in \admControl_{t}} \quad \text{for } t = 0, \ldots, \horizon-1.
	\end{aligned}
	\end{equation}
	Observe that for \(\dummy \in \proState[s]\), the coordinates \(\proj{\state} (\dummy )\) for \(t \neq s\) and all the \(\proj[\tau]{\conInp} (\dummy )\) are arbitrary. Similarly, for \(\dummy \in \proCon[s]\), all the coordinates \(\proj{\state} (\dummy )\) and \(\proj[\tau]{\conInp} (\dummy )\) for \(\tau \neq s\) are arbitrary. We say that \(\proState\) and \(\proCon\) are \emph{lifts} of \(\admStates_{t}\) and \(\admControl_{t}\), respectively.

	Third, we define maps \(\dynProj : \R^{\proDim} \lra \R^{\sysDim}\) for \(t = 0, \ldots, \horizon-1\), to \emph{lift} the dynamics of the system \eqref{e:gen sys} to \(\R^{\proDim}\) in the following way:
	\begin{equation}
	\label{e:process dynamics}
	\begin{aligned}
		& \overbrace{\R^{\sysDim} \times \ldots \times \R^{\sysDim}}^{\horizon+1 \text{ factors}} \times \underbrace{\R^{\conDim} \times \ldots \times \R^{\conDim}}_{\horizon \text{ factors}} \ni (\dummyState[0], \ldots \dummyState[\horizon], \dummyCon[0], \ldots \dummyCon[\horizon-1]) \teL \dummy \mapsto\\
		& \qquad \dynProj (\dummy ) \Let \sysDyn (\dummyState, \dummyCon ) - \dummyState[t+1] = \sysDyn \bigl( \proj{\state}(\dummy), \proj{\conInp} (\dummy) \bigr) - \proj[t+1]{\state}(\dummy) \in \R^{\sysDim}.
	\end{aligned}
	\end{equation}
	By definition, therefore, a vector \(\dummy = (\dummyState[0], \ldots, \dummyState[\horizon], \dummyCon[0], \ldots, \dummyCon[\horizon-1])\in\R^{\proDim}\) satisfies \(\dynProj[t] (\dummy ) = 0\) for all \(t = 0, \ldots, \horizon-1\), if and only if \(\dummyState[t+1] = \sysDyn[t](\dummyState[t], \dummyCon[t])\) for all \(t = 0, \ldots, \horizon-1\). We define a family of sets
	\begin{equation}
		\label{e:proDyn def}
		\proDyn \Let \set[\big]{\dummy \in \R^{\proDim} \suchthat \dynProj (\dummy ) = 0} \quad \text{for } t = 0, \ldots, \horizon-1.
	\end{equation}

	Finally, we define the \emph{lift} of the frequency constraints on the control trajectories:
	\begin{equation}
	\label{e:freq proj}
	\begin{aligned}
		& \overbrace{\R^{\sysDim} \times \ldots \times \R^{\sysDim}}^{\horizon+1 \text{ factors}} \times \underbrace{\R^{\conDim} \times \ldots \times \R^{\conDim}}_{\horizon \text{ factors}} \ni (\dummyState[0], \ldots \dummyState[\horizon], \dummyCon[0], \ldots \dummyCon[\horizon-1]) \teL \dummy \mapsto\\
		& \qquad \freqProj (\dummy) \Let \freqConstr \bigl( \proj[0]{\conInp} (\dummy), \ldots, \proj[\horizon-1]{\conInp} (\dummy) \bigr) = \freqConstr (\dummyCon[0], \ldots, \dummyCon[\horizon-1] ).
	\end{aligned}
	\end{equation}
	We define a \embf{process} \(\process\) to be the concatenation of a control trajectory \((\conInp_{0}, \ldots, \conInp_{\horizon} )\) and its corresponding state trajectory \((\state_{0}, \ldots, \state_{\horizon})\) traced by the system according to \eqref{e:gen sys} as
	\[
		\process \Let (\state_{0}, \ldots, \state_{\horizon}, \conInp_{0}, \ldots, \conInp_{\horizon}).
	\]
	A process \(\process\) satisfying the frequency constraints belongs to the set \(\proFreq\) defined by
	\begin{equation}
		\label{e:proFreq def}
		\proFreq \Let \set[\big]{\dummy \in \R^{\proDim} \suchthat \freqProj (\dummy) = 0}.
	\end{equation}

	Employing the lifts and the notations introduced in \eqref{e:process cost}, \eqref{e:proState and proCon def}, \eqref{e:proDyn def}, and \eqref{e:proFreq def}, we state the optimal control problem \eqref{e:opt prob} equivalently as the following relative extremum problem:
	\begin{equation}
	\label{e:sys rel extr}
	\begin{aligned}
		& \minimize_{\process\in\R^{\genDim}} && \proCost  (\process )\\
		& \sbjto &&	
		\begin{cases}
			\process \in \Sigma,\\
			\Sigma \Let \bigl(\bigcap_{t=0}^{\horizon-1} \proDyn \bigr) \cap \bigl(\bigcap_{t=0}^{\horizon}\proState \bigr) \cap \bigl(\bigcap_{t=0}^{\horizon-1} \proCon \bigr) \cap \proFreq.
		\end{cases}
	\end{aligned}
	\end{equation}
	In the sequel \(\optProc\) will denote a solution of the relative extremum problem \eqref{e:sys rel extr}, comprising of the optimal control trajectory \( (\optCon )_{t=0}^{\horizon-1}\) that solves \eqref{e:opt prob} and the resulting optimal state trajectory \( (\optState )_{t=0}^{\horizon}\).
	
	Define 
	\[
		\subLevel{\optProc} \Let \set{\optProc} \cup \set[\big]{\process \in \R^{\proDim} \suchthat \proCost (\process ) < \proCost (\optProc)}.
	\]
	By Proposition \ref{p:optimality condition}, \(\optProc\) solves \eqref{e:sys rel extr} if and only if \(\Sigma \cap \subLevel{\optProc} = \set{\optProc}\). Let 
	\[
		\fullDynTent, \fullStateTent, \fullConTent, \text{ and }\fullFreqTent \text{ be tents of the sets }\proDyn, \proState, \proCon,\text{ and }\proFreq \text{ at }\optProc, \text{ respectively}.
	\]
	(The sets \(\fullDynTent, \fullStateTent, \fullConTent\), and \(\fullFreqTent\) depend on \(\optProc\), of course, but for notational simplicity we do not explicitly depict the dependence of these sets on \(\optProc\) in what follows.) By Theorem \ref{th:half-space tent} the half-space given by
	\begin{equation}
	\label{e:subLevelTent}
		\subLevelTent \Let \set[\bigg]{\process \in \R^{\proDim} \suchthat \inprod{\derivative{\process}{\proCost (\optProc )}}{\process - \optProc} \le 0}
	\end{equation}
	is a tent of \(\subLevel{\optProc}\) at \(\optProc\).

	\begin{proposition}
	\label{p:sys sep}
		The family of tents \(\set[\big]{\subLevelTent, \fullFreqTent} \cup \bigl\{\fullDynTent\bigr\}_{t=0}^{\horizon-1} \cup \bigl\{\fullStateTent\bigr\}_{t=0}^{\horizon} \cup \bigl\{\fullConTent\bigr\} _{t=0}^{\horizon-1}\) is separable.
	\end{proposition}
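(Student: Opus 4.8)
The plan is to argue by contradiction, using the optimality characterization of Proposition \ref{p:optimality condition} together with the separation criterion of Theorem \ref{th:separability}. First I would record what optimality of \(\optProc\) provides: by Proposition \ref{p:optimality condition} applied to the function \(\proCost\) on the set \(\Sigma = \bigl(\bigcap_{t=0}^{\horizon-1}\proDyn\bigr)\cap\bigl(\bigcap_{t=0}^{\horizon}\proState\bigr)\cap\bigl(\bigcap_{t=0}^{\horizon-1}\proCon\bigr)\cap\proFreq\), the point \(\optProc\) minimizes \(\proCost\) over \(\Sigma\) if and only if \(\Sigma\cap\subLevel{\optProc} = \set{\optProc}\). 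Since \(\optProc\) is by hypothesis optimal for \eqref{e:sys rel extr}, this singleton identity holds.

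Next, suppose for contradiction that the family \(\set[\big]{\subLevelTent, \fullFreqTent} \cup \bigl\{\fullDynTent\bigr\}_{t=0}^{\horizon-1} \cup \bigl\{\fullStateTent\bigr\}_{t=0}^{\horizon} \cup \bigl\{\fullConTent\bigr\}_{t=0}^{\horizon-1}\) is inseparable, and invoke Theorem \ref{th:separability}. I would verify its hypotheses as follows. Each member of the family is a convex cone with the common vertex \(\optProc\), and each is a tent — hence a local tent — of the corresponding set (\(\subLevel{\optProc}\), \(\proFreq\), the \(\proDyn\), the \(\proState\), the \(\proCon\)) at \(\optProc\), by the very way those tents were fixed just before the statement, and, for \(\subLevelTent\), by Theorem \ref{th:half-space tent}. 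It then remains only to exhibit one cone of the family that is not a plane: the natural choice is \(\subLevelTent\), which is a genuine closed half-space precisely when \(\derivative{\process}{\proCost(\optProc)}\neq 0\). We may assume this is the case, since if \(\derivative{\process}{\proCost(\optProc)} = 0\) the conclusion of Theorem \ref{th:main pmp} is obtained at once with a normal multiplier and all remaining multipliers equal to zero — exactly as in the degenerate case dispatched at the start of the proof of Theorem \ref{th:necessary condition} — so there is nothing to prove there.

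With the hypotheses in hand, Theorem \ref{th:separability} produces a point \(\process'\neq\optProc\) lying in \(\subLevel{\optProc}\cap\proFreq\cap\bigl(\bigcap_{t=0}^{\horizon-1}\proDyn\bigr)\cap\bigl(\bigcap_{t=0}^{\horizon}\proState\bigr)\cap\bigl(\bigcap_{t=0}^{\horizon-1}\proCon\bigr) = \subLevel{\optProc}\cap\Sigma\), contradicting \(\Sigma\cap\subLevel{\optProc} = \set{\optProc}\); hence the family is separable. I do not expect a serious obstacle here: the argument is essentially a translation of optimality through Proposition \ref{p:optimality condition} into the hypothesis of Theorem \ref{th:separability}, and the genuine preparatory work — the construction of the lifted problem \eqref{e:sys rel extr} together with its tents, and the cone-theoretic machinery of Appendices \ref{sec:cones}--\ref{sec:tents} — is already in place. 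The only point that needs care is isolating the non-degeneracy requirement that at least one cone be a non-planar tent, together with peeling off the vanishing-gradient case \(\derivative{\process}{\proCost(\optProc)}=0\).
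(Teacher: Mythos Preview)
Your proposal is correct and follows essentially the same route as the paper: assume inseparability, apply Theorem \ref{th:separability} (using that \(\subLevelTent\) is a half-space, hence not a plane) to produce a point of \(\Sigma\cap\subLevel{\optProc}\) distinct from \(\optProc\), and contradict Proposition \ref{p:optimality condition}. You are in fact slightly more careful than the paper in explicitly isolating the degenerate case \(\derivative{\process}{\proCost(\optProc)}=0\), which the paper handles only implicitly via its earlier appeal to Theorem \ref{th:half-space tent}.
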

	\begin{proof}
		The assertion follows from Proposition \ref{p:optimality condition} and Theorem \ref{th:separability}. Indeed, since the tent \(\subLevelTent\) is a half-space, (and therefore, not a plane,) the family \(\set[\big]{\subLevelTent, \fullFreqTent} \cup \bigl\{\fullDynTent \bigr\}_{t=0}^{\horizon-1} \cup \bigl\{\fullStateTent \bigr\}_{t=0}^{\horizon}  \cup \bigl\{\fullConTent \bigr\} _{t=0}^{\horizon-1}\) of tents satisfies the hypothesis of Theorem \ref{th:separability}. If the family is not separable, then the intersection \(\Sigma \cap \subLevel{\optProc}\) contains a point \(\process'\) different from \(\optProc\). This means that \(\Sigma \cap \subLevel{\optProc} \neq \set{\optProc}\). But then, this contradicts optimality of \(\optProc\) (cf. Proposition \ref{p:optimality condition}).
	\end{proof}

	\begin{proposition}
	\label{p:sys dual cone}
		There exist vectors
		\begin{itemize}[label=\(\circ\), leftmargin=*]
			\item \(\dualCost \in \dualCone{\bigl(\subLevelTent \bigr)}\),
			\item \(\dualDyn \in \dualCone{\bigl(\fullDynTent \bigr)}\) for \(t=0, \ldots, \horizon-1\),
			\item \(\dualState \in \dualCone{\bigl(\fullStateTent \bigr)}\) for \(t=0, \ldots, \horizon\),
			\item \(\dualCon \in \dualCone{\bigl(\fullConTent \bigr)}\) for \(t=0, \ldots, \horizon-1\), and
			\item \(\dualFreq \in \dualCone{\bigl(\fullFreqTent\bigr)}\),
		\end{itemize}
		not all zero, such that
		\begin{equation}
		\label{e:sys dual cone}
			\dualCost + \sum_{t=0}^{\horizon-1} \dualDyn + \sum_{t=0}^{\horizon} \dualState + \sum_{t=0}^{\horizon-1} \dualCon + \dualFreq = 0.
		\end{equation}
	\end{proposition}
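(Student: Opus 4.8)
The plan is to recognize Proposition \ref{p:sys dual cone} as an immediate consequence of the separability established in Proposition \ref{p:sys sep} together with Theorem \ref{th:dual cone vectors}. The latter is precisely an equivalence between separability of a finite family of convex cones sharing a common vertex and the existence of a nontrivial collection of dual-cone vectors summing to zero; so the entire content of the proposition is to check that the hypotheses of Theorem \ref{th:dual cone vectors} are met and then to read off its conclusion with the appropriate relabeling.

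First I would observe that each of the objects \(\subLevelTent\), \(\fullFreqTent\), \(\fullDynTent\) (for \(t = 0, \ldots, \horizon-1\)), \(\fullStateTent\) (for \(t = 0, \ldots, \horizon\)), and \(\fullConTent\) (for \(t = 0, \ldots, \horizon-1\)) is, by construction, a convex cone with vertex at the optimal process \(\optProc\): the set \(\subLevelTent\) is the half-space \eqref{e:subLevelTent}, and the remaining ones are tents of \(\proFreq\), \(\proDyn\), \(\proState\), and \(\proCon\) at \(\optProc\), hence convex cones with that common vertex by the very definition of a tent. Thus the finite family \(\set[\big]{\subLevelTent, \fullFreqTent} \cup \bigl\{\fullDynTent\bigr\}_{t=0}^{\horizon-1} \cup \bigl\{\fullStateTent\bigr\}_{t=0}^{\horizon} \cup \bigl\{\fullConTent\bigr\}_{t=0}^{\horizon-1}\), consisting of \(3\horizon + 3\) cones with the single common vertex \(\optProc\), is exactly of the form to which Theorem \ref{th:dual cone vectors} applies.

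Next, Proposition \ref{p:sys sep} asserts that this family is separable. Enumerating the family as \(\genTent_{0}, \ldots, \genTent_{s}\) with \(s = 3\horizon + 2\) and invoking the forward implication of Theorem \ref{th:dual cone vectors} then yields vectors \(\lambda_{i} \in \dualCone{\genTent_{i}}\), not all zero, satisfying \(\lambda_{0} + \cdots + \lambda_{s} = 0\). Relabeling each \(\lambda_{i}\) according to which set in the family \(\genTent_{i}\) was --- calling the vector paired with \(\subLevelTent\) by the name \(\dualCost\), the one paired with \(\fullFreqTent\) by \(\dualFreq\), and the ones paired with \(\fullDynTent\), \(\fullStateTent\), \(\fullConTent\) by \(\dualDyn\), \(\dualState\), \(\dualCon\) respectively --- produces precisely the membership claims of the proposition and the identity \eqref{e:sys dual cone}, and the ``not all zero'' assertion is inherited verbatim.

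I do not anticipate a genuine obstacle here; the only points requiring care are bookkeeping ones: confirming that every tent in play is in fact a convex cone with the common vertex \(\optProc\) (so that Theorem \ref{th:dual cone vectors} is applicable without modification), and keeping the relabeling between the abstract enumeration \(\genTent_{0}, \ldots, \genTent_{s}\) and the named constraint tents consistent so that each resulting dual vector lands in the correct dual cone.
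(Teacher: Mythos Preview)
Your proposal is correct and follows exactly the same route as the paper: invoke Proposition \ref{p:sys sep} to obtain separability of the family of tents, then apply Theorem \ref{th:dual cone vectors} to extract the nontrivial dual-cone vectors summing to zero. The paper's proof is a terse one-sentence version of what you wrote; your additional remarks about the common vertex and the relabeling are sound but not strictly necessary.
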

	\begin{proof}
		Since the family of cones \(\set[\big]{\subLevelTent, \fullFreqTent} \cup \bigl\{\fullDynTent\bigr\}_{t=0}^{\horizon-1} \cup \bigl\{\fullStateTent \bigr\}_{t=0}^{\horizon} \cup \bigl\{\fullConTent \bigr\}_{t=0}^{\horizon-1}\) is separable in view of Proposition \ref{p:sys sep}, by Theorem \ref{th:dual cone vectors} there exist vectors in the dual cones of each of the cones in \(\set[\big]{\subLevelTent, \fullFreqTent} \cup \bigl\{\fullDynTent \bigr\}_{t=0}^{\horizon-1} \cup \bigl\{\fullStateTent \bigr\}_{t=0}^{\horizon} \cup \bigl\{\fullConTent \bigr\}_{t=0}^{\horizon-1}\) that satisfy \eqref{e:sys dual cone}.
	\end{proof}

	We observe that since \(\process \in \subLevelTent\) satisfies the inequality
	\[
		\inprod{\derivative{\process}{\proCost (\optProc )}}{\process - \optProc} \le 0,
	\]
	in view of \eqref{e:subLevelTent}, every vector in the dual cone \(\dualCone{\bigl(\subLevelTent \bigr)}\) is of the form
	\[
		\dualCost = \adjCost \derivative{\process}{\proCost (\optProc )},
	\]
	where \(\adjCost \ge 0\).

	\begin{proposition}
	\label{p:springers}
		If \(\optProc\) is a solution of the relative extremum problem \eqref{e:sys rel extr}, there exist \(\adjCost \ge 0\) and dual vectors 
		\begin{itemize}[label=\(\circ\), leftmargin=*]
			\item \(\dualDyn \in \dualCone{\bigl(\fullDynTent \bigr)}\) for \(t=0,\ldots, \horizon-1\),
			\item \(\dualState \in \dualCone{\bigl(\fullStateTent \bigr)}\) for \(t=0, \ldots, \horizon\),
			\item \(\dualCon \in \dualCone{\bigl(\fullConTent \bigr)}\) for \(t=0, \ldots, \horizon-1\), and
			\item \(\dualFreq \in \dualCone{\bigl(\fullFreqTent \bigr)}\),
		\end{itemize}
		such that
		\begin{equation}
		\label{e:springers equation}
			\adjCost \derivative{\process}{\proCost (\optProc )} + \sum_{t=0}^{\horizon-1} \dualDyn + \sum_{t=0}^{\horizon} \dualState + \sum_{t=0}^{\horizon} \dualCon + \dualFreq = 0.
		\end{equation}
		In particular, if \(\adjCost = 0\), then at least one of the vectors \(\bigl\{\dualDyn\bigr\} _{t=0}^{\horizon-1}, \bigl\{\dualState\bigr\} _{t=0}^{\horizon}, \bigl\{\dualCon\bigr\} _{t=0}^{\horizon-1}, \dualFreq\) is not zero.
	\end{proposition}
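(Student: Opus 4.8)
The plan is to obtain Proposition \ref{p:springers} as an immediate consequence of Proposition \ref{p:sys dual cone} together with the description, recorded just above, of the dual cone of the half-space $\subLevelTent$. All the substantive work --- the separability of the family of tents and the consequent existence of dual vectors summing to zero --- has already been carried out in Propositions \ref{p:sys sep} and \ref{p:sys dual cone} via Theorems \ref{th:separability} and \ref{th:dual cone vectors}; what remains is essentially bookkeeping.

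First I would invoke Proposition \ref{p:sys dual cone}: since $\optProc$ solves the relative extremum problem \eqref{e:sys rel extr}, there are vectors $\dualCost \in \dualCone{\bigl(\subLevelTent\bigr)}$, $\dualDyn \in \dualCone{\bigl(\fullDynTent\bigr)}$ for $t = 0, \ldots, \horizon-1$, $\dualState \in \dualCone{\bigl(\fullStateTent\bigr)}$ for $t = 0, \ldots, \horizon$, $\dualCon \in \dualCone{\bigl(\fullConTent\bigr)}$ for $t = 0, \ldots, \horizon-1$, and $\dualFreq \in \dualCone{\bigl(\fullFreqTent\bigr)}$, not all zero, satisfying \eqref{e:sys dual cone}. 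Next I would use the fact that the polar of the half-space \eqref{e:subLevelTent}, whose outward normal is $\derivative{\process}{\proCost(\optProc)}$, is exactly the ray $\set[\big]{\adjCost\,\derivative{\process}{\proCost(\optProc)} \suchthat \adjCost \ge 0}$ --- this ray collapses to $\set{0}$ when $\derivative{\process}{\proCost(\optProc)} = 0$, in which case the formula still holds with $\adjCost = 0$ --- a short decomposition of a candidate dual vector along and orthogonal to the normal gives this. Hence $\dualCost = \adjCost\,\derivative{\process}{\proCost(\optProc)}$ for some $\adjCost \ge 0$, and substituting this into \eqref{e:sys dual cone} produces \eqref{e:springers equation} verbatim.

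For the closing non-triviality assertion, suppose $\adjCost = 0$. Then $\dualCost = \adjCost\,\derivative{\process}{\proCost(\optProc)} = 0$, so the non-triviality supplied by Proposition \ref{p:sys dual cone} forces at least one of the vectors $\bigl\{\dualDyn\bigr\}_{t=0}^{\horizon-1}$, $\bigl\{\dualState\bigr\}_{t=0}^{\horizon}$, $\bigl\{\dualCon\bigr\}_{t=0}^{\horizon-1}$, $\dualFreq$ to be nonzero, which is precisely the claim. The only point that calls for a moment's care --- and it is a minor one --- is the degenerate situation $\derivative{\process}{\proCost(\optProc)} = 0$, where one simply takes $\adjCost = 0$ and the non-triviality is inherited unchanged from Proposition \ref{p:sys dual cone}; no genuine obstacle arises in this proof.
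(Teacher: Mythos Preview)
Your approach is essentially the paper's own: the paper's proof is the single line ``Follows at once from the arguments in the proof of Theorem \ref{th:necessary condition},'' and what you have written is exactly an unpacking of those arguments through the intermediate Propositions \ref{p:sys sep} and \ref{p:sys dual cone} together with the observation that $\dualCone{(\subLevelTent)}$ is the ray generated by the gradient.

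One small correction in the degenerate case. When $\derivative{\process}{\proCost(\optProc)} = 0$, you propose to set $\adjCost = 0$ and inherit non-triviality from Proposition \ref{p:sys dual cone}. But in that situation $\subLevelTent$, as defined in \eqref{e:subLevelTent}, becomes all of $\R^{\proDim}$; it is then a plane, and Theorem \ref{th:half-space tent} no longer guarantees it is a tent of $\subLevel{\optProc}$, so the separability argument behind Propositions \ref{p:sys sep} and \ref{p:sys dual cone} is not available to furnish the remaining nonzero vector. The remedy (and this is precisely how the proof of Theorem \ref{th:necessary condition} handles the case) is to take $\adjCost = 1$ and all the dual vectors equal to zero: the equation \eqref{e:springers equation} reads $1\cdot 0 + 0 = 0$, and the clause ``if $\adjCost = 0$ then \ldots'' is satisfied vacuously. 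With this adjustment your argument is complete and coincides with the paper's.
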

	\begin{proof}
		Follows at once from the arguments in the proof of Theorem \ref{th:necessary condition}.
	\end{proof}

	\begin{proposition}
	\label{p:sys insep}
		The family of tents \(\bigl\{\fullStateTent\bigr\}_{t=0}^{\horizon} \cup \bigl\{\fullConTent \bigr\}_{t=0}^{\horizon-1}\) is not separable.
	\end{proposition}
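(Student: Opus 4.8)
The plan is to realize the family $\bigl\{\fullStateTent\bigr\}_{t=0}^{\horizon}\cup\bigl\{\fullConTent\bigr\}_{t=0}^{\horizon-1}$ as an instance of the configuration treated in Theorem \ref{th:inseparability}, taking advantage of the fact that the lifted constraint sets $\proState$ and $\proCon$ of \eqref{e:proState and proCon def} are \emph{cylinders} over single coordinate blocks of $\R^{\proDim}$.

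First I would record the coordinate splitting of $\R^{\proDim}$ induced by \eqref{e:dummy process}: there are $\horizon+1$ copies of $\R^{\sysDim}$, one carrying each $\dummyState[t]$ for $t=0,\ldots,\horizon$, together with $\horizon$ copies of $\R^{\conDim}$, one carrying each $\dummyCon[t]$ for $t=0,\ldots,\horizon-1$. These $2\horizon+1$ subspaces are pairwise independent and their sum is all of $\R^{\proDim}$. By the very definitions in \eqref{e:proState and proCon def}, $\proState$ imposes a condition only on the $\dummyState[t]$-block and leaves every other coordinate unconstrained, and likewise $\proCon$ constrains only the $\dummyCon[t]$-block; thus the $2\horizon+1$ sets $\proState[0],\ldots,\proState[\horizon],\proCon[0],\ldots,\proCon[\horizon-1]$ are cylinders over pairwise \emph{distinct} blocks, and these blocks exhaust the whole splitting.

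Next I would make the choice of tents explicit. Writing $\subSpace{i}^{\Delta}$ for the sum of the $2\horizon$ coordinate blocks other than the one on which a given $\proState$ (resp.\ $\proCon$) acts, I take $\fullStateTent \Let \stateTent(\optState)+\subSpace{i}^{\Delta}$, where $\stateTent(\optState)$ is a local tent of $\admStates_{t}$ at $\optState$ inside the relevant $\R^{\sysDim}$-block, and similarly $\fullConTent \Let \conTent(\optCon)+\subSpace{i}^{\Delta}$, over the respective ranges of $t$. This choice is admissible: a cylinder over a tent of $\admStates_{t}$ is again a local tent of $\proState$ at $\optProc$, because extending the tent map of $\admStates_{t}$ by the identity on the free coordinates preserves both the first-order condition $\tentMap(\genVar)=\genVar+o(\genVar-\vertex)$ and the inclusion property, and because the fact that $\subSpace{i}^{\Delta}$ is a full linear subspace guarantees that the resulting cone genuinely has vertex $\optProc$. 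Moreover, since the Minkowski sum of two convex cones sharing a vertex equals the convex hull of their union, each such $\fullStateTent$ and $\fullConTent$ has precisely the form $\chull\bigl(\genTent\cup\subSpace{i}^{\Delta}\bigr)$ figuring in Theorem \ref{th:inseparability}.

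Finally, with the $2\horizon+1$ coordinate blocks playing the role of the subspaces $\subSpace{1},\ldots,\subSpace{s}$ (whose sum is $\R^{\proDim}$) and the cylinders $\fullStateTent$, $\fullConTent$ playing the role of the cones $N_{i}$, Theorem \ref{th:inseparability} applies verbatim and yields that $\bigl\{\fullStateTent\bigr\}_{t=0}^{\horizon}\cup\bigl\{\fullConTent\bigr\}_{t=0}^{\horizon-1}$ is inseparable, i.e.\ not separable. I expect the only slightly delicate point to be the verification, in the previous paragraph, that a cylinder over a tent is itself a (local) tent with the correct vertex once the ambient coordinates decouple; the heart of the matter, by contrast, is the elementary structural observation that state and control constraints, after the lift to $\R^{\proDim}$, live on mutually independent coordinate blocks, which is exactly the situation Theorem \ref{th:inseparability} is built to exploit.
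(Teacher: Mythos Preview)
Your proposal is correct and follows essentially the same route as the paper: both arguments exhibit the coordinate-block decomposition of $\R^{\proDim}$ into the $2\horizon+1$ factors carrying $\dummyState[0],\ldots,\dummyState[\horizon],\dummyCon[0],\ldots,\dummyCon[\horizon-1]$, identify each $\fullStateTent$ and $\fullConTent$ with $\chull\bigl(\genTent_i\cup\subSpace{i}^{\Delta}\bigr)$ for a tent $\genTent_i$ living in a single block, and invoke Theorem \ref{th:inseparability}. Your write-up is in fact slightly more explicit than the paper's on two points the paper glosses over: the equivalence of the Minkowski sum with the convex hull of the union, and the verification that a cylinder over a local tent is again a local tent of the lifted set.
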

	\begin{proof}
		Define the subspaces \(\subSpace{\state_{s}}\), \(s = 0,\ldots, T\), and \(\subSpace{\conInp_{s}}\), \(s = 0, \ldots, T-1\), as:
		\begin{align*}
			& \subSpace{\state_{s}} \Let \left\{ \process \in \R^{\proDim} \left|\,
				\begin{aligned}
					& \proj{\state} (\process) = 0 \quad \text{for } t \in \set{0, \ldots, \horizon} \setmin \set{s} ,\quad \text{and }\\
					& \proj{\conInp} (\process ) = 0 \quad \text{for } t = 0, \ldots, \horizon-1
				\end{aligned}
			\right. \right\},\\
			& \subSpace{\conInp_{s}} \Let \left\{ \process \in \R^{\proDim} \left|\,
				\begin{aligned}
					& \proj{\state} (\process ) = 0\text{ for }t = 0, \ldots, T,\text{ and }\\
					& \proj{\conInp} (\process ) = 0\text{ for }t \in \set{0, \ldots, T-1} \setmin \set{s}
				\end{aligned}
			\right. \right\}.
		\end{align*}
		Observe that \(\subSpace{\state_{0}} + \cdots + \subSpace{\state_{\horizon}} + \subSpace{\conInp_{0}} + \cdots + \subSpace{\conInp_{\horizon-1}} = \R^{\proDim}\). Consider the subspaces \(\subSpace{\state_{t}}^{\Delta}\) and \(\subSpace{\conInp_{t}}^{\Delta}\) of \(\R^{\proDim}\) defined by:
		\begin{align*}
			& \subSpace{\state_{t}}^{\Delta} \Let \set[\big]{\process \in \R^{\proDim} \suchthat \proj{\state} (\process ) = 0} \quad \text{for } t = 0, \ldots, \horizon,\\
			& \subSpace{\conInp_{t}}^{\Delta} \Let \set[\big]{\process \in \R^{\proDim} \suchthat \proj{\conInp} (\process ) = 0} \quad \text{for } t = 0, \ldots, \horizon-1.
		\end{align*}
		\[
			\subSpace{\state_{t}}^\Delta = \overset{T+1 \text{ factors}}{\overbrace{\R^{\sysDim} \times \ldots \times \R^{\sysDim} \times \underset{(t+1) \text{-th factor}}{\underbrace{\{0\}}} \times \R^{\sysDim} \ldots \times \R^{\sysDim}}} \times\overset{T \text{ factors}}{\overbrace{\R^{\conDim} \times \ldots \times \ldots \times \R^{\conDim}}}.
		\]
		\[
			\subSpace{\conInp_{t}}^\Delta = \overset{T+1 \text{ factors}}{\overbrace{\R^{\sysDim} \times \ldots \times \R^{\sysDim}}} \times\overset{T \text{ factors}}{\overbrace{\R^{\conDim} \times \ldots \times \R^{\conDim} \times \underset{(t+1) \text{-th factor}}{\underbrace{\{0\}}} \times \R^{\conDim} \times \ldots \times \R^{\conDim}}}.
		\]

		Let \(\stateTent  (\optState )\) be a local tent of \(\admStates_{t}\) at \(\optState\) and let \(\conTent  (\optCon )\) be a local tent of \(\admControl_{t}\) at \(\optCon\). Observe that the inclusions \(\stateTent \subset \proj{\state}\bigl(\subSpace{\state_{t}}\bigr)\) for \(t = 0, \ldots, \horizon\), and \(\conTent\subset \proj{\conInp}\bigl(\subSpace{\conInp_{t}}\bigr)\) for \(t = 0, \ldots, \horizon-1\), hold.
		
		We now construct a family of tents \(\fullStateTent\) and \(\fullConTent\) in the following way:
		\begin{equation}
		\label{e:constraint tents}
		\begin{aligned}
			& \fullStateTent \Let \set[\big]{\process \in \R^{\proDim} \suchthat \proj{\state} (\process ) \in \stateTent ( \proj{\state} (\optProc ) )} \quad \text{for } t = 0, \ldots, \horizon,\\
			& \fullConTent \Let \set[\big]{\process \in \R^{\proDim} \suchthat \proj{\conInp} (\process ) \in \conTent ( \proj{\conInp} (\optProc ) )} \quad \text{for } t = 0, \ldots, \horizon-1.
		\end{aligned}
		\end{equation}
		Let us lift the tents \(\stateTent (\optState)\) as follows:
		\[
			\widetilde{\stateTent} (\optState) \Let \Bigl( \overset{T+1 \text{ factors}}{\overbrace{\{0\} \times \ldots \times \{0\} \times \underset{(t+1)\text{-th factor}}{\underbrace{\stateTent(\optProc)}} \times \{0\} \times \ldots \times \{0\}}} \times \overset{T \text{ factors}}{\overbrace{\{0\} \times \ldots \times \{0\} \times \ldots \times \{0\}}} \Bigr)
		\]
		Observe that \(\widetilde{\stateTent} (\optState) \in \subSpace{\state_{t}}\).
		Therefore,
		\[
			\fullStateTent = \widetilde{\stateTent} (\optState) \cup \subSpace{\state_{t}}^{\Delta}
		\]
		and similarly for \(\fullConTent\). Since \(\stateTent\) and \(\conTent\) are convex cones, it follows that the tents \(\fullStateTent\) and \(\fullConTent\) satisfy the hypothesis of Theorem \ref{th:inseparability}.

		Theorem \ref{th:inseparability} asserts that the family of tents \(\bigl\{\fullStateTent\bigr\}_{t=0}^{\horizon} \cup \bigl\{\fullConTent\bigr\}_{t=0}^{\horizon-1}\) is inseparable, and this establishes the claim.
	\end{proof}

	\begin{remark}
		Note that any sub-family of an inseparable family of cones is also inseparable. Thus, in addition to the family of tents \(\bigl\{\fullStateTent\bigr\}_{t=0}^{\horizon} \cup \bigl\{ \fullConTent\bigr\}_{t=0}^{\horizon-1}\) being inseparable, we have that the families \(\bigl\{\fullStateTent\bigr\}_{t=0}^{\horizon}\), \(\bigl\{ \fullConTent\bigr\}_{t=0}^{\horizon-1}\) are both individually inseparable.
	\end{remark}

	The following proposition constitutes the keystone of our proof of the main Theorem \ref{th:main pmp}.
	\begin{proposition}
	\label{p:keystone}
		If \(\optProc\) is an optimal process of \eqref{e:sys rel extr}, then there exist \(\adjCost \ge 0\) and dual vectors
		\begin{itemize}[label=\(\circ\), leftmargin=*]
			\item \(\dualDyn \in \dualCone{\bigl(\fullDynTent \bigr)}\) for \(t=0, \ldots, \horizon-1\),
			\item \(\dualState \in \dualCone{\bigl(\fullStateTent \bigr)}\) for \(t=0, \ldots, \horizon\), and
			\item \(\dualFreq \in \dualCone{\bigl(\fullFreqTent \bigr)}\),
		\end{itemize}
		such that 
		\begin{equation}
		\label{e:keystone equation}
			\inprod{- \adjCost \derivative{\process}{\proCost (\optProc )} + \sum_{t=0}^{\horizon-1} \dualDyn - \sum_{t=0}^{\horizon} \dualState - \dualFreq}{\perturb{\process}} \le 0,
		\end{equation}
		for every vector \(\perturb{\process}\) such that \(\optProc + \perturb{\process} \in \bigcap_{t=0}^{\horizon-1} \fullConTent\). In particular, if \(\adjCost = 0\), then at least one of \(\bigl\{ \dualDyn\bigr\} _{t=0}^{\horizon-1}\) and \(\dualFreq\) is not zero.
	\end{proposition}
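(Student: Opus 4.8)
The plan is to squeeze \eqref{e:keystone equation} out of the linear relation \eqref{e:springers equation} already furnished by Proposition~\ref{p:springers}, by eliminating the control-tent multipliers in favour of an inequality tested against admissible control perturbations. Concretely, I would first invoke Proposition~\ref{p:springers} to obtain $\adjCost\ge 0$ together with dual vectors $\dualDyn\in\dualCone{(\fullDynTent)}$ ($t=0,\dots,\horizon-1$), $\dualState\in\dualCone{(\fullStateTent)}$ ($t=0,\dots,\horizon$), $\dualCon\in\dualCone{(\fullConTent)}$ ($t=0,\dots,\horizon-1$), and $\dualFreq\in\dualCone{(\fullFreqTent)}$ satisfying \eqref{e:springers equation}, and, when $\adjCost=0$, with at least one of $\dualDyn,\dualState,\dualCon,\dualFreq$ nonzero. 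Rewriting \eqref{e:springers equation} as
\[
	\sum_{t=0}^{\horizon-1}\dualCon = -\adjCost\,\derivative{\process}{\proCost(\optProc)} - \sum_{t=0}^{\horizon-1}\dualDyn - \sum_{t=0}^{\horizon}\dualState - \dualFreq,
\]
I would then fix an arbitrary $\perturb{\process}$ with $\optProc+\perturb{\process}\in\bigcap_{t=0}^{\horizon-1}\fullConTent$; since each $\dualCon$ lies in the polar cone of the tent $\fullConTent$ with vertex $\optProc$, we get $\inprod{\dualCon}{\perturb{\process}}=\inprod{\dualCon}{(\optProc+\perturb{\process})-\optProc}\le 0$ for every $t$, hence $\inprod{\sum_{t=0}^{\horizon-1}\dualCon}{\perturb{\process}}\le 0$, and substituting the displayed identity gives $\inprod{-\adjCost\,\derivative{\process}{\proCost(\optProc)}-\sum_{t=0}^{\horizon-1}\dualDyn-\sum_{t=0}^{\horizon}\dualState-\dualFreq}{\perturb{\process}}\le 0$.

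Comparing with \eqref{e:keystone equation}, the cost, state, and frequency terms already carry the right sign, but the dynamics term has the opposite one, so the next step is to reconcile it. The set $\proDyn$ is the zero set $\{\dynProj[t]=0\}$ of a map whose partial derivative in the $\dummyState[t+1]$ slot equals $-I_{\sysDim}$ and is therefore surjective; hence $\proDyn$ is a smooth submanifold, and by Theorem~\ref{th:tangent plane} its tangent plane at $\optProc$ is a tent. Choosing $\fullDynTent$ to be that tangent plane makes $\dualCone{(\fullDynTent)}$ a linear subspace, so $-\dualDyn\in\dualCone{(\fullDynTent)}$; relabelling $\dualDyn\leftarrow-\dualDyn$ (which does nothing when $\dualDyn=0$) turns the inequality just derived into exactly \eqref{e:keystone equation}, the relabelled $\dualDyn$ still lying in $\dualCone{(\fullDynTent)}$ while $\dualState$ and $\dualFreq$ are untouched.

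For the final assertion, suppose $\adjCost=0$, $\dualDyn=0$ for all $t$, and $\dualFreq=0$. Then \eqref{e:springers equation} collapses to $\sum_{t=0}^{\horizon}\dualState+\sum_{t=0}^{\horizon-1}\dualCon=0$. By Proposition~\ref{p:sys insep} the family $\{\fullStateTent\}_{t=0}^{\horizon}\cup\{\fullConTent\}_{t=0}^{\horizon-1}$ is inseparable, and since by Theorem~\ref{th:dual cone vectors} an inseparable family of convex cones with a common vertex admits no nonzero tuple of polar-cone vectors summing to zero, it follows that $\dualState=0$ and $\dualCon=0$ for all $t$; but then every vector produced by Proposition~\ref{p:springers} vanishes while $\adjCost=0$, contradicting its non-triviality clause. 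Hence at least one of $\{\dualDyn\}_{t=0}^{\horizon-1}$ and $\dualFreq$ is nonzero. The whole argument is essentially bookkeeping on Propositions~\ref{p:springers} and~\ref{p:sys insep}; the one place I would be careful --- and the natural spot to slip --- is the sign reconciliation of the dynamics multiplier, which works only because $\fullDynTent$ may be taken as the tangent subspace of the manifold $\proDyn$, so that its polar cone is closed under negation.
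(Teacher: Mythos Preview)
Your proof is correct and follows essentially the same approach as the paper's own proof: invoke Proposition~\ref{p:springers}, pair the resulting identity against admissible control perturbations to kill the $\dualCon$ terms via the polar-cone inequality, then flip the sign of $\dualDyn$ using that $\fullDynTent$ is a tangent subspace; the non-triviality clause is handled identically via Proposition~\ref{p:sys insep} and Theorem~\ref{th:dual cone vectors}. Your added remark that $\proDyn$ is a smooth manifold because $\partial_{\dummyState[t+1]}\dynProj[t]=-I_{\sysDim}$ is surjective is a welcome bit of extra care the paper leaves implicit.
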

	\begin{proof}
		From Proposition \ref{p:springers} we infer that there exist \(\adjCost \ge 0\) and dual vectors
		\begin{itemize}[label=\(\circ\), leftmargin=*]
			\item \(\dualDyn \in \dualCone{\bigl(\fullDynTent \bigr)}\) for \(t=0, \ldots, \horizon-1\),
			\item \(\dualState \in \dualCone{\bigl(\fullStateTent \bigr)}\) for \(t=0, \ldots, \horizon\),
			\item \(\dualCon \in \dualCone{\bigl(\fullConTent \bigr)}\) for \(t = 0, \ldots, \horizon-1\), and
			\item \(\dualFreq \in \dualCone{\bigl(\fullFreqTent \bigr)}\),
		\end{itemize}
		satisfying \eqref{e:springers equation}, such that if \(\adjCost = 0\), then  at least one of the vectors 
		\[
			\bigl\{\dualDyn\bigr\}_{t=0}^{\horizon-1}, \bigl\{ \dualState\bigr\}_{t=0}^{\horizon}, \bigl\{\dualCon\bigr\}_{t=0}^{\horizon-1}, \dualFreq
		\]
		is not zero. From Proposition \ref{p:sys insep} we know that the family of cones \(\bigl\{\fullStateTent\bigr\}_{t=0}^{\horizon} \cup \bigl\{\fullConTent\bigr\}_{t=0}^{\horizon-1}\) is inseparable. Observe that if \(\adjCost = 0\) and all of \(\bigl\{ \dualDyn\bigr\}_{t=0}^{\horizon-1}\) and \(\dualFreq\) are zero, then the vectors \(\bigl\{\dualCon\bigr\}_{t=0}^{\horizon}, \bigl\{\dualCon\bigr\}_{t=0}^{\horizon-1}\) in the dual cones of the family \(\bigl\{\fullStateTent\bigr\}_{t=0}^{\horizon} \cup \bigl\{\fullConTent\bigr\}_{t=0}^{\horizon}\), not all zero, satisfy
		\[
			\sum_{t=0}^{\horizon} \dualState + \sum_{t=0}^{\horizon-1} \dualCon = 0.
		\]
		In view of Theorem \ref{th:dual cone vectors}, this contradicts the fact that the family of cones \(\bigl\{\fullStateTent\bigr\}_{t=0}^{\horizon} \cup \bigl\{\fullConTent\bigr\}_{t=0}^{\horizon-1}\) is inseparable. This establishes the final assertion.

		We now establish the main assertion. From \eqref{e:springers equation} we have, for any \(\perturb{\process}\in\R^{\proDim}\),
		\[
			\inprod{\adjCost \derivative{\process}{\proCost (\optProc )} + \sum_{t=0}^{\horizon-1} \dualDyn + \sum_{t=0}^{\horizon} \dualState + \dualFreq}{\perturb{\process}} = - \sum_{t=0}^{\horizon-1} \inprod{\dualCon}{\perturb{\process}}.
		\]
		If \(\perturb{\process}\) is a vector such that \(\optProc + \perturb{\process} \in \bigcap_{t=0}^{\horizon-1} \fullConTent\), then \(\optProc + \perturb{\process} \in \fullConTent\) for each \(t = 0, \ldots, \horizon-1\). Since \(\dualCon \in \dualCone{\bigl(\fullConTent \bigr)}\), by definition we have \(\inprod{\dualCon}{\perturb{\process}} \le 0\) for each \(t = 0, \ldots, \horizon-1\), leading to
		\begin{align*}
			& \inprod{\adjCost \derivative{\process}{\proCost (\optProc )} + \sum_{t=0}^{\horizon-1} \dualDyn + \sum_{t=0}^{\horizon} \dualState + \dualFreq}{\perturb{\process}} = - \sum_{t=0}^{\horizon-1} \inprod{\dualCon}{\perturb{\process}} \ge 0\\
			\Rightarrow \quad & \inprod{- \adjCost \derivative{\process}{\proCost (\optProc)} + \sum_{t=0}^{\horizon-1} \bigl(- \dualDyn \bigr) - \sum_{t=0}^{\horizon} \dualState - \dualFreq}{\perturb{\process}} \le 0
		\end{align*}
		Observe that the dual cones \(\dualCone{\bigl(\fullDynTent \bigr)}\) are subspaces and hence, if \(\dualDyn \in \dualCone{\bigl(\fullDynTent \bigr)}\), then \(\widetilde{\dualDyn} \Let -\dualDyn \in \dualCone{\bigl(\fullDynTent \bigr)}\). And this proves the proposition.
	\end{proof}

	Before we delve into the final result that helps us prove Theorem \ref{th:main pmp}, we make some observations on the dual vectors and the gradient matrices. We have the following characterisation of the dual vectors \(\bigl\{\dualDyn\bigr\}_{t=0}^{\horizon-1}\), \(\bigl\{\dualState\bigr\}_{t=0}^{\horizon}\) and \(\dualFreq\).
	\begin{itemize}[leftmargin=*, align=left]
		\item By the construction in \eqref{e:constraint tents}, for \(\optProc + \perturb{\process} \in \fullStateTent\), the coordinates \(\proj[s]{\state} (\perturb{\process} )\) for \(s \neq t\) are arbitrary and \(\proj[\tau]{\conInp} (\perturb{\process} )\) are arbitrary. The coordinates \(\proj{\state}(\perturb{\process})\) lie in the cone \(\stateTent (\optProc )\). Since a dual vector \(\dualState \in \dualCone{ \bigl(\fullStateTent \bigr)}\) has to satisfy \eqref{e:dual cone}, for all \(\optProc + \perturb{\process} \in \fullStateTent\)
			\[
				\inprod{\dualState}{\perturb{\process}} \le 0
			\]
			But since \(\proj[s]{\state} (\perturb{\process})\) for \(s \neq t\) and \(\proj[\tau]{\conInp}(\perturb{\process})\) for \(\tau = 0, \ldots, \horizon-1\) are arbitrary, it can be seen that the corresponding coordinates in \(\dualState\) are zeroes, that is, \(\proj[s]{\state} (\dualState ) = 0\) for \(s \neq t\) and \(\proj[\tau]{\conInp} (\dualState ) = 0\) for \(\tau = 0, \ldots, \horizon-1\).

			Since \(\proj{\state} (\perturb{\process}) \in \stateTent (\optState )\), the corresponding coordinate of \(\dualState\), which is \(\proj{\state} (\dualState )\) lies in the dual cone \(\dualCone{\bigl(\stateTent (\optState) \bigr)}\) which we denote by \(\adjState\).
			\[
				\dualState = (0, \ldots, \adjState, \ldots, 0, 0, \ldots, 0)
			\]
		\item By Theorem \ref{th:tangent plane}, the tangent plane of \(\proDyn\) at \(\optProc\) is a tent of \(\proDyn\) at \(\optProc\). Considering \(\fullDynTent\) to be the tangent plane, every vector in the corresponding dual cone \(\dualCone{\bigl(\fullDynTent \bigr)}\) is of the form
			\[
				\dualDyn = \biggl(\derivative{\process}{\dynProj (\optProc )} \biggr) \transp \adjDyn, \quad \text{for } t = 0, \ldots, \horizon-1,
			\]
			where \(\adjDyn \in \R^{\sysDim}\).
		\item Similarly, the tangent plane of \(\proFreq\) at \(\optProc\) is a tent of \(\proFreq\) at \(\optProc\). Considering \(\fullFreqTent\) to be the tanget plane, every vector in the dual cone \(\dualCone{\bigl(\fullFreqTent \bigr)}\) is of the form
			\[
				\dualFreq = \biggl(\derivative{\process}{\freqConstr (\optProc)} \biggr) \transp \adjFreq,
			\]
			where \(\adjFreq \in \R^{\freqDim}\).
	\end{itemize}

	From \eqref{e:process cost}, \eqref{e:process dynamics}, \eqref{e:freq proj}, we obtain the components of \(\dualCost, \dualDyn, \dualFreq\) as follows:
	\begin{equation}
	\label{e:derivatives}
	\begin{aligned}
		\begin{dcases}
		\derivative{\dummyState}{\proCost (\optProc)} = \derivative{\dummyState[]}{\cost \bigl(\optState, \optCon \bigr)}, & \derivative{\dummyCon}{\proCost (\optProc)} = \derivative{\dummyCon[]}{\cost \bigl( \optState, \optCon \bigr)},\\
		\derivative{\dummyState}{\dynProj (\optProc )} = \derivative{\dummyState[]}{\sysDyn \bigl(\optState, \optCon \bigr)}, & \derivative{\dummyState}{\dynProj[t+1] (\optProc)} = -I_{\sysDim},\\
		\derivative{\dummyState}{\freqConstr (\optProc)} = 0, & \derivative{\dummyCon}{\freqConstr (\optProc)} = \freqDer, \\
		\derivative{\dummyCon}{\dynProj (\optProc)} = \derivative{\dummyCon}{\sysDyn \bigl(\optState, \optCon \bigr)},
		\end{dcases}
	\end{aligned}
	\end{equation}
	for \(t = 0, \ldots, \horizon-1\), and \(I_{\sysDim}\) being the \(\sysDim \times \sysDim\) identity matrix.

	\begin{proposition}
	\label{p:archway}
		If \(\optProc \Let \bigl((\optState)_{t=0}^{\horizon}, (\optCon)_{t=0}^{\horizon-1} \bigr)\) is an optimal process of the optimal control problem \eqref{e:opt prob}, then there exist \(\adjCost \ge 0\) and dual vectors
		\begin{itemize}[label=\(\circ\), leftmargin=*]
			\item \(\adjDyn \in \dualSpace{\bigl(\R^{\sysDim}\bigl)} \quad\) for \(t = 0, \ldots, \horizon-1\),
			\item \(\adjState \in \dualCone{\bigl(\stateTent (\optState)\bigr)} \quad\) for \(t = 0, \ldots, \horizon\),
			\item \(\adjFreq \in \dualSpace{\bigl(\R^{\freqDim}\bigr)}\),
		\end{itemize}
		such that,
		\begin{enumerate}[label={\rm (\roman*)}, leftmargin=*, align=left, widest=iii]
			\item \label{arch: dynamics}
				\begin{align*}
				- \adjCost \derivative{\genState}{\cost (\optState, \optCon )} + \derivative{\genState}{\sysDyn (\optState, \optCon )} \transp \adjDyn - \adjDyn[t-1] - \adjState = 0 \quad \text{for } t = 1, \ldots, \horizon-1;\\
			\end{align*}
			\item \label{arch:transversality} while \(\adjDyn[0], \adjDyn[\horizon-1]\) satisfy
				\begin{align*}
					& \adjDyn[\horizon-1] = - \adjState[\horizon], \\
					& -\adjCost \derivative{\genState}{\cost[0] (\optState[0], \optCon[0] )} + \derivative{\genState}{\sysDyn[0] (\optState[0], \optCon[0])} \transp \adjDyn[0] - \adjState[0] = 0; \quad \text{and}
				\end{align*}
			\item \label{arch:ham VI}
				\[
					\inprod{- \adjCost \derivative{\genCon}{\cost (\optState, \optCon )} + \derivative{\genCon}{\sysDyn (\optState, \optCon )} \transp \adjDyn - \freqDer \transp \adjFreq }{\perturb{\conInp}_{t}} \le 0
				\]
					for all vectors \(\perturb{\conInp}_{t}\) such that \(\optCon + \perturb{\conInp}_{t} \in \conTent (\optCon)\), for \(t = 0, \ldots, \horizon-1\).
		\end{enumerate}
		In particular, if \(\adjCost = 0\), then at least one of \(\bigl\{\adjDyn\bigr\}_{t=0}^{\horizon-1}\) and \(\adjFreq\) is not zero.
	\end{proposition}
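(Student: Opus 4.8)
The plan is to decompose the conclusion of Proposition~\ref{p:keystone} coordinate by coordinate. Write $V \Let - \adjCost \derivative{\process}{\proCost (\optProc)} + \sum_{t=0}^{\horizon-1} \dualDyn - \sum_{t=0}^{\horizon} \dualState - \dualFreq$ for the vector appearing inside the inner product of \eqref{e:keystone equation}, where $\adjCost \ge 0$ and the dual vectors $\dualDyn \in \dualCone{(\fullDynTent)}$, $\dualState \in \dualCone{(\fullStateTent)}$, $\dualFreq \in \dualCone{(\fullFreqTent)}$ are the ones supplied by that proposition, so that $\inprod{V}{\perturb{\process}} \le 0$ for every $\perturb{\process}$ with $\optProc + \perturb{\process} \in \bigcap_{t=0}^{\horizon-1} \fullConTent$. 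First I would substitute the three structural descriptions of these dual vectors recorded immediately above the statement: $\dualState$ is supported only on its $t$-th state slot, with that entry a vector $\adjState \in \dualCone{(\stateTent(\optState))}$; $\dualDyn = \bigl(\derivative{\process}{\dynProj(\optProc)}\bigr)\transp \adjDyn$ for some $\adjDyn \in \R^{\sysDim}$; and $\dualFreq = \bigl(\derivative{\process}{\freqConstr(\optProc)}\bigr)\transp \adjFreq$ for some $\adjFreq \in \R^{\freqDim}$. Combined with the gradient formulas \eqref{e:derivatives}, this expresses each state block $\proj{\state}(V)$ and each control block $\proj{\conInp}(V)$ explicitly in terms of $\adjCost$, the vectors $\bigl(\adjDyn\bigr)_{t=0}^{\horizon-1}$, the vectors $\bigl(\adjState\bigr)_{t=0}^{\horizon}$, and $\adjFreq$. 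The point worth isolating is that each dynamics map $\dynProj$ involves the joint variable only through its own state and control blocks and the following state block; hence the state-$t$ component of the dynamics part $\sum_{t=0}^{\horizon-1}\dualDyn$ reduces to $\derivative{\genState}{\sysDyn(\optState,\optCon)}\transp \adjDyn - \adjDyn[t-1]$, with $-\adjDyn[t-1]$ contributed through the $-I_{\sysDim}$ coefficient of the $(t-1)$-th dynamics map on the $t$-th state block, its control-$t$ component reduces to $\derivative{\genCon}{\sysDyn(\optState,\optCon)}\transp \adjDyn$, and $\dualFreq$ contributes nothing to any state block while its control-$t$ component equals $\freqDer \transp \adjFreq$.

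Next I would split the admissible perturbations. Because the sets $\fullConTent$ constrain only the control coordinates and contain the vertex $\optCon$ in those coordinates, every $\perturb{\process}$ whose control blocks all vanish and whose state blocks $\proj{\state}(\perturb{\process})$ are arbitrary lies in $\bigcap_{t=0}^{\horizon-1}\fullConTent$; feeding such $\perturb{\process}$ and its negative into \eqref{e:keystone equation} forces $\proj{\state}(V) = 0$ for every $t = 0, \ldots, \horizon$. Reading these equalities at the interior indices $t = 1, \ldots, \horizon-1$ reproduces \ref{arch: dynamics}; at $t = 0$, where the $-\adjDyn[t-1]$ term is absent, and at $t = \horizon$, where the cost and the $\sysDyn$ term drop out and only $-\adjDyn[\horizon-1] - \adjState[\horizon]$ survives, they reproduce the two relations of \ref{arch:transversality}. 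Finally, holding the state blocks at zero and all control blocks but the $t$-th at zero, \eqref{e:keystone equation} collapses to $\inprod{\proj{\conInp}(V)}{\perturb{\conInp}_{t}} \le 0$ for every $\perturb{\conInp}_{t}$ with $\optCon + \perturb{\conInp}_{t} \in \conTent(\optCon)$; substituting $\proj{\conInp}(V) = -\adjCost \derivative{\genCon}{\cost(\optState,\optCon)} + \derivative{\genCon}{\sysDyn(\optState,\optCon)}\transp \adjDyn - \freqDer \transp \adjFreq$ gives precisely \ref{arch:ham VI}.

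The nontriviality clause transfers at once: Proposition~\ref{p:keystone} guarantees that if $\adjCost = 0$ then some $\dualDyn$ or $\dualFreq$ is nonzero, and since $\dualDyn = \bigl(\derivative{\process}{\dynProj(\optProc)}\bigr)\transp \adjDyn$ and $\dualFreq = \bigl(\derivative{\process}{\freqConstr(\optProc)}\bigr)\transp \adjFreq$, a nonzero $\dualDyn$ forces the corresponding $\adjDyn \neq 0$ and a nonzero $\dualFreq$ forces $\adjFreq \neq 0$. I do not anticipate a genuine obstacle here; the only steps demanding care are the coordinate accounting at the two boundary indices $0$ and $\horizon$ --- making sure the adjoint recursion \ref{arch: dynamics} degenerates into the correct transversality identities \ref{arch:transversality} rather than spuriously acquiring an $\adjDyn[-1]$ or a $\derivative{\genState}{\sysDyn[\horizon]}$ term --- together with the elementary observation that the state blocks of $\perturb{\process}$ may indeed be varied freely without leaving $\bigcap_{t=0}^{\horizon-1}\fullConTent$.
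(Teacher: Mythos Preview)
Your proposal is correct and follows essentially the same approach as the paper: both start from Proposition~\ref{p:keystone}, insert the structural forms of the dual vectors together with the derivative table~\eqref{e:derivatives}, and then read off the state-block and control-block components of~\eqref{e:keystone equation} by choosing perturbations supported on a single coordinate slot. Your treatment of the boundary indices and of the nontriviality clause matches the paper's as well.
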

	\begin{proof}
		By Proposition \ref{p:keystone}, there exist vectors \(\bigl\{\adjDyn\bigr\}_{t=0}^{\horizon-1}, \adjFreq\) and \(\adjCost \in \R\), not all zero satistying \eqref{e:keystone equation}. By the construction in \eqref{e:constraint tents}, for \(\optProc + \perturb{\process} \in \fullConTent\), the coordinates \(\proj{\state} (\perturb{\process} )\) are arbitrary. So we choose \(t\)  (\(\in \set[\big]{0, \ldots, \horizon}\)) and a \(\perturb{\process}\) such that \(\proj{\state} (\perturb{\process} ) \in \R^{\sysDim}\) is arbitrary and
		\begin{align*}
			& \proj[s]{\state} (\perturb{\process}) = 0 \quad \text{for } s = 0, \ldots, \horizon, \quad s \neq t,\\
			& \proj[\tau]{\conInp} (\perturb{\process}) = 0 \quad \text{for } \tau = 0, \ldots, \horizon-1.
		\end{align*}
		Let \(\perturb{\state}_{t} \Let \proj{\state} (\perturb{\process})\). When we use this particular collection of \(\perturb{\process}\) in \eqref{e:keystone equation}, only the \(\dummyState\) coordinates in the dual vectors will survive. And the remaining equation is,
		\[
			\inprod{- \adjCost \derivative{\dummyState}{\proCost (\optProc )} + \sum_{s=0}^{\horizon-1} \derivative{\dummyState}{\dynProj (\optProc )} - \sum_{s=0}^{\horizon} \dualState - \derivative{\dummyState}{\freqConstr (\optProc )}}{\perturb{\state}_{t}} \le 0.
		\]
		Using the fact that \(\perturb{\state}_{t}\) can be positive or negative and the results in \eqref{e:derivatives}, we have the following condition for each \(t = 1, \ldots, \horizon-1\):
		\begin{equation}
		\label{e:adjoint equation}
		\begin{aligned}
			& - \adjCost \derivative{\dummyState}{ \sum_{s=0}^{\horizon-1} \cost[s] (\optProc)} + \sum_{s=0}^{\horizon-1} \biggl( \derivative{\dummyState}{\dynProj[s] (\optProc)}\biggr) \transp \adjDyn[s] - \adjState = 0\\
			\Rightarrow \quad & - \adjCost \derivative{\dummyState}{\cost (\optState, \optCon)} + \derivative{\dummyState}{\sysDyn (\optState, \optCon)} - \adjDyn[t-1] - \adjState[t] = 0
		\end{aligned}
		\end{equation}
		Using \(\perturb{\state}_{\horizon-1}\) of the same construction, we get the equation \(\adjDyn[\horizon-1] + \adjState[\horizon] = 0\) and using \(\perturb{\state}_{0}\), we get,
		\[
			- \adjCost \derivative{\dummyState[0]}{\cost[0] (\optState[0], \optCon[0])} + \derivative{\dummyState[0]}{\sysDyn[0] (\optState[0], \optCon[0])} - \adjState[0] = 0
		\]
		This proves the first condition.

		If we take \(\perturb{\process}\) such that its coordinates \(\proj[s]{\state}(\perturb{\process})\) are all zero and \(\proj[\tau]{\conInp}(\perturb{\process})\) are zero for \(\tau = 0, \ldots, \horizon-1\), \(\tau \neq t\). And \(\proj{\conInp}(\perturb{\process}) \teL \perturb{\conInp}_{t}\) is such that \(\optCon + \perturb{\conInp}_{t} \in \conTent (\optCon)\). It is easy to see that the vector \(\perturb{\process}\) thus generated lies in the intersection \(\bigcap_{t=0}^{\horizon-1} \fullConTent\). Thus, using equation \eqref{e:keystone equation} and by the construction, we have
		\begin{equation}
		\label{e:ham max}
			\inprod{- \adjCost \derivative{\dummyCon}{\cost (\optState, \optCon)} + \derivative{\dummyCon}{\sysDyn (\optState, \optCon)} \transp \adjDyn - \freqDer \transp \adjFreq}{\perturb{\conInp}_{t}} \le 0.
		\end{equation}
		This procedure can be repeated with vectors for each \(t = 0, \ldots, \horizon-1\), and the assertion follows.
	\end{proof}

We are finally ready for ready for the proof of our main result.
\begin{proof}[Proof of Theorem \ref{th:main pmp}]
	Observe that from the definition of the Hamiltonian \(\hamiltonian\) in \eqref{e:hamiltonian}, we have
	\begin{equation}
	\label{e:ham der}
	\begin{dcases}
		\derivative{\genState}{\hamiltonian \bigl(\adjDyn, t, \optState, \optCon \bigr)} = - \adjCost \derivative{\genState}{\cost (\optState, \optCon)} + \derivative{\genState}{\sysDyn (\optState, \optCon)} \transp \adjDyn \\
		\qquad\qquad \text{for } t = 0, \ldots, \horizon-1, \quad \text{and}\\
		\derivative{\genCon}{\hamiltonian \bigl(\adjDyn, t, \optState, \optCon \bigr)} = - \adjCost \derivative{\genCon}{\cost (\optState, \optCon)} + \derivative{\genCon}{\sysDyn (\optState, \optCon)} \transp \adjDyn - \freqDer \transp \adjFreq.
	\end{dcases}
	\end{equation}
	The conditions of non-negativity \ref{pmp:non-negativity}, non-triviality \ref{pmp:non-triviality} follow from the statement of the Proposition \ref{p:archway}. From \eqref{e:ham der} and \eqref{e:adjoint equation}, we get the adjoint dynamics in \ref{pmp:optimal dynamics}. The transversality conditions follow from Proposition \ref{p:archway} \ref{arch:transversality}. The equation \eqref{e:ham max} readily provides the Hamiltonian maximisation condition \ref{pmp:Hamiltonian VI}.
\end{proof}

		\subsection{Alternate Proof}
			\label{sec:version 2}
			This section provides an alternate approach, suggested to us by Navin Khaneja, to establish Theorem \ref{th:main pmp}; we include it here for its scientific merit and for completeness.

\begin{proof}[Alternate Proof of Theorem \ref{th:main pmp}]
	Let us define an auxillary system with the dynamics
	\begin{equation}
	\label{e:aux sys}
		\auxState[t+1] = \auxSysDyn (\auxState, \conInp_{t}) \Let \auxState - \freqDer \conInp_{t} \quad \text{for } t = 0, \ldots, \horizon-1,
	\end{equation}
	where \(\auxState \in \R^{\freqDim}\).

	Observe that the frequency constraints in \eqref{e:opt prob}, in view of \eqref{e:freq assumptions}, can now be viewed as the terminal state constraint on the auxillary system \eqref{e:aux sys} as
	\begin{equation}
	\label{e:aux bounds}
		\auxState[0] = 0 \quad \text{and} \quad \auxState[\horizon] = 0.
	\end{equation}


	We can now rewrite the problem \eqref{e:opt prob} into a standard optimal control problem with constraints on control magnitude and states.
	\begin{equation}
	\label{e:aug opt prob}
	\begin{aligned}
		& \minimize_{(\conInp_{t})_{t=0}^{\horizon -1}} && \sum_{t=0}^{\horizon-1} \cost (\state_{t}, \conInp_{t})\\
	& \sbjto &&
	\begin{cases}
		\text{dynamics \eqref{e:gen sys}},\\
		\text{auxillary dynamics \eqref{e:aux sys}},\\
		\state_{t} \in \admStates_{t} \quad \text{for } t = 0, \ldots, \horizon,\\
		\auxState[0] = 0 \quad \text{and} \quad \auxState[\horizon] = 0,\\
		\conInp_{t} \in \admControl_{t} \quad \text{for } t = 0, \ldots, \horizon-1,\\
	\end{cases}
\end{aligned}
	\end{equation}
	For the optimal control problem \eqref{e:aug opt prob}, using the usual PMP formulation, we can define the Hamiltonian as
	\begin{equation}
	\label{e:aug Hamiltonian}
	\begin{aligned}
		& \R \times \dualSpace{\bigl(\R^{\freqDim} \bigr)} \times \dualSpace{\bigl(\R^{\sysDim}\bigr)} \times \Nz \times \R^{\sysDim} \times \R^{\freqDim} \times \R^{\conDim} \ni (\genCost, \genAuxDyn, \genDyn, \genTime, \genAuxState, \genState, \genCon) \mapsto\\
			& \qquad \hamiltonian[\genCost] (\genAuxDyn, \genDyn, \genTime, \genAuxState, \genState, \genCon) \Let \inprod{\genAuxDyn}{\auxSysDyn[\genTime](\genAuxState, \genCon)} + \inprod{\genDyn}{\sysDyn[\genTime](\genState, \genCon)} - \genCost \cost[\genTime](\genState, \genCon) \in \R.
		\end{aligned}
	\end{equation}
	From the assertions of the usual PMP, if \(\bigl((\optAuxState)_{t=0}^{\horizon}, (\optState)_{t=0}^{\horizon}, (\optCon)_{t=0}^{\horizon-1}\bigr)\) is an optimal state-action trajectory of \eqref{e:aug opt prob}, then there exist
	\begin{itemize}[label=\(\circ\), leftmargin=*]
		\item a trajectory \(\bigl(\adjDyn\bigr)_{t=0}^{\horizon-1} \subset \dualSpace{\bigl(\R^{\sysDim}\bigr)}\),
		\item a trajectory \(\bigl(\adjAuxDyn\bigr)_{t=0}^{\horizon-1} \subset \dualSpace{\bigl(\R^{\freqDim}\bigr)}\),
		\item a sequence \(\bigl(\adjState\bigr)_{t=0}^{\horizon} \subset \dualSpace{\bigl(\R^{\sysDim}\bigr)}\) and
		\item \(\adjCost \in \R\)
	\end{itemize}
	satisfying the following conditions:
	\begin{enumerate}[label={\textup{(N-\roman*)}}, leftmargin=*, align=left, widest=iii]
		\item \label{npmp:non-negativity} non-negativity condition
			\begin{quote}
				\(\adjCost \ge 0;\)
			\end{quote}
		\item \label{npmp:non-triviality} non-triviality condition
			\begin{quote}
				the state-adjoint trajectory \(\bigl(\adjDyn\bigr)_{t=0}^{\horizon-1}\), the auxillary state-adjoint trajectory \(\bigl(\adjAuxDyn\bigr)_{t=0}^{\horizon-1}\) and \(\adjCost\) do not simultaneously vanish;
			\end{quote}
		\item \label{npmp:optimal dynamics} state, auxillary state and adjoint system dynamics
				\begin{alignat*}{2}
					\optState[t+1] & = \derivative{\genDyn}{\hamiltonian[\adjCost] \bigl(\adjAuxDyn, \adjDyn, t, \optAuxState, \optState, \optCon \bigr)} \quad && \text{for } t = 0, \ldots, \horizon-1,\\
					\optAuxState[t+1] & = \derivative{\genAuxDyn}{\hamiltonian[\adjCost] \bigl(\adjAuxDyn, \adjDyn, t, \optAuxState, \optState, \optCon \bigr)} \quad && \text{for } t = 0, \ldots, \horizon-1\\
					\adjDyn[t-1] & = \derivative{\genState}{\hamiltonian[\adjCost] \bigl(\adjAuxDyn, \adjDyn, t, \optAuxState, \optState, \optCon \bigr)} - \adjState \quad && \text{for } t = 1, \ldots, \horizon-1,\\
					\adjAuxDyn[t-1] & = \derivative{\genAuxState}{\hamiltonian[\adjCost] \bigl(\adjAuxDyn, \adjDyn, t, \optAuxState, \optState, \optCon \bigr)} \quad && \text{for } t = 1, \ldots, \horizon-1,
				\end{alignat*}
				where \(\adjState\) lies in the dual cone of a tent \(\stateTent( \optState )\) of \(\admStates_{t}\) at \(\optState\); 
			\item \label{npmp:transversality} transversality conditions
				\begin{align*}
					& \derivative{\genState}{\hamiltonian[\adjCost] \bigl(\adjAuxDyn[0], \adjDyn[0], 0, \optAuxState[0], \optState[0], \optCon[0] \bigr)} - \adjState[0] = 0 \qquad \text{and} \qquad \adjDyn[\horizon-1] = -\adjState[\horizon],\\
					& \optAuxState[0] = 0 \qquad \text{and} \qquad \optAuxState[\horizon] = 0;
				\end{align*}
				where \(\adjState[0]\) lies in the dual cone of a tent \(\stateTent[0] (\optState[0])\) of \(\admStates_{0}\) at \(\optState[0]\) and \(\adjState[\horizon]\) lies in the dual cone of a tent \(\stateTent[\horizon] (\optState[\horizon])\) of \(\admStates_{\horizon}\) at \(\optState[\horizon]\);
			\item \label{npmp:Hamiltonian VI} Hamiltonian maximization condition, pointwise in time,
				\begin{align*}
					\inprod{\derivative{\genCon}{\hamiltonian[\adjCost] \bigl(\adjAuxDyn, \adjDyn, t, \optAuxState, \optState, \optCon \bigr)}}{\perturb{\conInp}_{t}} \le 0 \quad \text{whenever } \optCon + \perturb{\conInp}_{t} \in \conTent (\optCon),
				\end{align*}
				where \(\conTent(\optCon)\) is a local tent at \(\optCon\) of the set \(\admControl_{t}\) of admissible actions;
	\end{enumerate}

	Observe that from the definition of Hamiltonian in \eqref{e:aug Hamiltonian} and from \eqref{e:aux sys}, the auxillary state-adjoint dynamics reduces to (for \(t = 1, \ldots, \horizon-1\))
	\begin{align*}
	 	\adjAuxDyn[t-1] & = \derivative{\genAuxState}{\hamiltonian[\adjCost] \bigl(\adjAuxDyn, \adjDyn, t, \optAuxState, \optState, \optCon \bigr)} \\
	 	& = \derivative{\genAuxState}{\biggl(\inprod{\adjAuxDyn}{\auxSysDyn(\optAuxState, \optCon)} + \inprod{\adjDyn}{\sysDyn(\optState, \optCon)} - \adjCost \cost(\optState, \optCon)\biggr)} \\
		& = \derivative{\genAuxState}{\inprod{\adjAuxDyn}{\auxSysDyn (\optAuxState, \optCon)}} = \adjAuxDyn
	\end{align*}
	and \(\adjAuxDyn[\horizon-1]\) can be chosen arbitrarily. This implies the trajectory \(\bigl(\adjAuxDyn\bigr)_{t=0}^{\horizon-1}\) can be replaced by a constant vector, say \(\adjFreq \in \R^{\freqDim}\). That is,
	\begin{equation}
	\label{e:enter the adjFreq}
		\adjAuxDyn[0] = \cdots = \adjAuxDyn[\horizon-1] \teL \adjFreq.
	\end{equation}
	Similarly, using the definition of the Hamiltonian, the condition \ref{npmp:Hamiltonian VI} can be written as
	\begin{equation}
	\label{e:equivalence of Ham VI}
	\begin{aligned}
		& \inprod{\derivative{\genCon}{\hamiltonian[\adjCost] \bigl(\adjAuxDyn, \adjDyn, t, \optAuxState, \optState, \optCon \bigr)}}{\perturb{\conInp}_{t}} \le 0 \\ 
		\Leftrightarrow \quad & \inprod{\derivative{\genCon}{\biggl(\inprod{\adjAuxDyn}{\auxSysDyn(\optAuxState, \optCon)} + \inprod{\adjDyn}{\sysDyn(\optState, \optCon)} - \adjCost \cost(\optState, \optCon)\biggr)}}{\perturb{\conInp}_{t}} \le 0 \\ 
		\Leftrightarrow \quad & \inprod{\derivative{\genCon}{\biggl(\inprod{\adjAuxDyn}{\optAuxState + \freqDer \optCon} + \inprod{\adjDyn}{\sysDyn(\optState, \optCon)} - \adjCost \cost(\optState, \optCon)\biggr)}}{\perturb{\conInp}_{t}} \le 0 \\
		\Leftrightarrow \quad & \inprod{\derivative{\genCon}{\biggl(\inprod{\adjDyn}{\sysDyn (\optState, \optCon)} - \adjCost \cost (\optState, \optCon) + \inprod{\adjFreq}{- \freqDer \optCon}\biggr)}}{\perturb{\conInp}_{t}} \le 0 \\
	\end{aligned}
	\end{equation}
	whenever \(\optCon + \perturb{\conInp}_{t} \in \conTent\).

	Hence, defining a new Hamiltonian as in \eqref{e:hamiltonian}, the conditions \ref{npmp:non-negativity} - \ref{npmp:Hamiltonian VI} transform to the conditions \ref{pmp:non-negativity} - \ref{pmp:freq} as shown below.
	\begin{enumerate}[label=(\roman*), align=left, leftmargin=*, widest=iii]
		\item The non-negativity condition \ref{pmp:non-negativity} is same as the condition \ref{npmp:non-negativity}
		\item Since the non-triviality condition \ref{npmp:non-triviality} asserts that \(\adjCost, \bigl(\adjDyn\bigr)_{t=0}^{\horizon-1}, \bigl(\adjAuxDyn\bigr)_{t=0}^{\horizon-1}\) do not vanish simultaneosly, the non-triviality condition \ref{pmp:non-triviality} follows from \eqref{e:enter the adjFreq}.
		\item It can be observed from the way the Hamiltonian is defined in \eqref{e:hamiltonian}, the optimal state and adjoint dynamics specified in \ref{npmp:optimal dynamics} is same as the one in \ref{pmp:optimal dynamics}.
		\item The transversality conditions in \ref{pmp:transversality} also follow from the definition of Hamiltonian in \eqref{e:hamiltonian} and the conditions \ref{npmp:transversality} on states \((\state)\) and adjoint \((\adjDyn)\).
		\item From \eqref{e:equivalence of Ham VI}, we can see that \ref{npmp:Hamiltonian VI} holds if and only if \ref{pmp:Hamiltonian VI} holds.
		\item The condition \ref{pmp:freq} is another way of writing the transversality conditions on auxillary states and auxillary state-adjoints in \ref{npmp:transversality}. The equivalence follows directly from the dynamics of auxillary states specified by \ref{npmp:optimal dynamics} and the equivalence of the condition \ref{pmp:freq} and the boundary conditions on auxillary states in \ref{npmp:transversality} as shown in \eqref{e:aux bounds}.
	\end{enumerate}
	Our proof is now complete.
\end{proof}

	\section{Proofs of Corollaries}
		\label{sec:corollary proofs}
		\begin{proof}[Proof of Corollary \ref{co:con-affine pmp}]
	The conditions \ref{aff:non-negativity}, \ref{aff:non-triviality}, \ref{aff:optimal dynamics}, \ref{aff:transversality}, and \ref{aff:freq} follow directly from Theorem \ref{th:main pmp}. The Hamiltonian maximization condition, pointwise in time, \ref{aff:Hamiltonian VI} is proved as follows:

	Since \(\admControl_{t}\) is convex, by Theorem \ref{th:supporting cone tent}, the supporting cone \(\supportCone\) of \(\admControl_{t}\) at \(\optCon\) is a local tent of \(\admControl_{t}\) at \(\optCon\). By \ref{pmp:Hamiltonian VI}, for every vector \(\perturb{\conInp_{t}}\) satisfying \(\optCon + \perturb{\conInp_{t}} \in \supportCone\), the optimal actions \(\optCon\), optimal states \(\optState\) and the adjoint vectors \(\adjDyn\) satisfy
\[
	\inprod{\derivative{\genCon}{\hamiltonian \bigl(\adjDyn, t, \optState, \optCon \bigr)}}{\perturb{\conInp}_{t}} \le 0.
\]
	Since the supporting cone \(\supportCone\) includes the set \(\admControl_{t}\), the directions \(\perturb{\conInp_{t}}\) satisfying \(\optCon + \perturb{\conInp_{t}} \in \supportCone\) include all the directions into the set \(\admControl_{t}\) from \(\optCon\). This implies that at \(\optCon\),  the directional derivative \(\derivative{\genCon}{\hamiltonian \bigl(\adjDyn, t, \optState, \optCon \bigr)}\) is non-positive for every direction \(\perturb{\conInp_{t}}\) into the set \(\admControl_{t}\), which is a necessary condition for optimality of \(\hamiltonian \bigl(\adjDyn, t, \optState, \genCon \bigr)\) at \(\optCon\). Note that since \(\affCost (\genState, \cdot)\) is convex, we have
	\[
		\biggl[\frac{\partial^{2}}{\partial \genCon^{i} \partial \genCon^{j}} \hamiltonian \bigl(\adjDyn, t, \optState, \optCon \bigr) \biggr]_{i,j} = - \biggl[\frac{\partial^{2}}{\partial \genCon^{i} \partial \genCon^{j}} \affCost (\optState, \optCon) \biggr]_{i, j} \le 0.
	\]
	Thus, the function \(\hamiltonian \bigl(\adjDyn, t, \optState, \cdot \bigr) : \admControl_{t} \ra \R\) is concave, and hence the necessary condition for optimality is also sufficient.
	The set \(\admControl_{t}\) being compact, the function \(\hamiltonian\) attains its maximum.
\end{proof}

\begin{proof}[Proof of Corollary \ref{co:linear systems}]
	Observe that when \(\admStates_{t} = \R^{\sysDim}\), the dual cone  of \(\stateTent (\optState)\), \(\dualCone{\bigl(\stateTent(\optState) \bigr)} = \set[\big]{0}\) and when \(\admStates_{t}\) is a singleton set, the dual cone \(\dualCone{\bigl(\stateTent(\optState) \bigr)} = \R^{\sysDim}\). Since \(\admStates_{t} = \R^{\sysDim}\) for \(t = 1, \ldots, \horizon-1\), the vectors \(\adjState = 0\) for \(t = 1, \ldots, \horizon-1\). Thus, the adjoint dynamics in \ref{aff:optimal dynamics} specialises to \eqref{e:adj linear sys}.
Since \(\admStates_{0}\) and \(\admStates_{\horizon}\) are singleton sets, the vectors \(\adjState[0]\) and \(\adjState[\horizon]\) are arbitrary and thus the transversality conditions in \ref{aff:transversality} are trivially satisfied.
\end{proof}

	\section{Proofs of LQ Propositions}
		\label{sec:LQ proofs}
		\begin{proof}[Proof of Proposition \ref{p:normality}]
	Since both the initial and the final states are fixed, from the transversality conditions we see that \(\adjDyn[0]\) and \(\adjDyn[\horizon-1]\) can be arbitrary. Suppose that the PMP holds in abnormal form, i.e., \(\adjCost = 0\). In this case the adjoint dynamics equation reduces to the following.
	\begin{align*}
		\adjDyn[t-1] = \sys \transp \adjDyn \quad \text{for } t = 1, \ldots, \horizon-1
	\end{align*}
	The adjoint variable \(\adjDyn\) is given in terms of the \(\adjDyn[\horizon-1]\) as
	\begin{equation}
	\label{e:adj rel}
		\adjDyn = \bigl(\sys \transp \bigr)^{\horizon-t-1} \adjDyn[\horizon-1] \quad \text{for } t = 0, \ldots, \horizon-1.
	\end{equation}
	From the Hamiltonian maximization condition (which is uncosntrained optimization with respect to control variable since there are no control action constraints), we obtain the following conditions.
	\[
		\adjCost \controlCost \optCon = \control \transp \adjDyn \quad \text{for } t = 0, \ldots, \horizon-1.
	\]
	Since \(\adjCost = 0\), by assumption, it follows that
	\begin{align*}
		& \control \transp \adjDyn = 0 \quad \text{for } t = 0, \ldots, \horizon-1, \\
		\Rightarrow \quad & \control \transp \bigl(\sys \transp \bigr)^{\horizon-t-1} \adjDyn[\horizon-1] = 0 \quad \text{for } t = 0, \ldots \horizon-1 \quad \text{in view of \eqref{e:adj rel}.}
	\end{align*}
	This implies that \(\adjDyn[\horizon-1]\) is in the null space of \(\pmat{\control & \sys \control & \ldots & \sys^{\horizon-1} \control} \transp\). But since the pair \((\sys, \control)\) is controllable, the matrix \(\pmat{\control & \sys \control & \ldots & \sys^{\horizon-1} \control}\) has full column rank and thus, its range space (image) is \(\R^{\sysDim}\). Since the range space (image) of a matrix \(C\) is orthogonal to the kernel/null space of its transpose \(C \transp\),  the null space of \(\pmat{\control & \sys \control & \ldots & \sys^{\horizon-1} \control} \transp\) is just the zero vector. This means that \(\adjDyn[\horizon-1] = 0\). From \eqref{e:adj rel}, we see that \(\bigl(\adjDyn\bigr)_{t=0}^{\horizon-1}\) is the zero sequence. But this contradicts the non-triviality assertion of the PMP.
\end{proof}

\begin{proof}[Proof of Proposition \ref{p:freq normality}]
	If \(\adjCost = 0\), then from \eqref{e:LQ freq sol} we have,
	\begin{align*}
		& \control \transp \adjDyn = \freqDer \transp \adjFreq \quad \text{for } t = 0, \ldots, \horizon-1, \quad \text{and}\\
	& \adjDyn[t-1] = \sys \transp \adjDyn \quad \text{for } t = 0, \ldots, \horizon-1.
	\end{align*}
	This means \(\adjDyn = (\sys \transp)^{\horizon-1-t} \adjDyn[\horizon-1]\)  for \(t = 0, \ldots, \horizon-1\) and therefore,
	\[
		\control \transp  (\sys \transp)^{\horizon-1-t} \adjDyn[\horizon-1] = \freqDer \transp \adjFreq \quad \text{for } t = 0, \ldots, \horizon-1.
	\]
	Letting
	\[
		\LQReach \Let \pmat{\control \transp (\sys \transp)^{\horizon-1}\\ \vdots \\ \control \transp } \in \R^{\conDim \horizon \times \sysDim} \quad \text{and} \quad \LQFreq \Let \pmat{\freqDer[0] \transp \\ \vdots \\ \freqDer[\horizon-1] \transp} \in \R^{\conDim \horizon \times \freqDim},
	\]
	we have \(\LQReach \adjDyn[\horizon-1] = \LQFreq \adjFreq\). Note that \(\LQReach\) is the transpose of the reachability matrix and \(\LQFreq = \bigl(\mathscr{F} \linTran \inverse \bigr) \transp\). By assumption, \(\rank \bigl(\LQReach\bigr) = \sysDim\).

	If the equation
	\begin{equation}
	\label{e:abn req}
		\pmat{\LQReach & - \LQFreq} \pmat{\adjDyn[\horizon-1] \\ \adjFreq} = 0
	\end{equation}
	admits a non-trivial solution, then there exist \(\adjCost, \adjFreq\) and \(\bigl(\adjFreq\bigr)_{t=0}^{\horizon-1}\), not all zero, satisfying \eqref{e:LQ freq sol}. Since when \(\adjCost = \) the optimal state-action trajectory \(\bigl((\optState)_{t}^{\horizon}, (\optCon)_{t=0}^{\horizon-1}\bigr)\) is independent of \(\adjFreq\) and \(\bigl(\adjDyn\bigr)_{t=0}^{\horizon-1}\), every feasible solution of \eqref{e:LQ ST freq} is an abnormal solution of PMP.

	The equation \eqref{e:abn req} admits a non-trivial solution only when \(\rank \pmat{\LQReach & - \LQFreq} < \sysDim + \freqDim\). Since \(\rank \pmat{\LQReach & - \LQFreq} = \min \{\sysDim + \freqDim, \conDim \horizon \}\), there exist non-trivial solutions to \eqref{e:abn req} when \(\sysDim + \freqDim > \conDim \horizon\). And when the rows of the reachability matrix and the frequency constraints matrix \(\mathscr{F} \linTran \inverse\) are independent, the \(\rank \pmat{\LQReach & - \LQFreq} = \sysDim + \freqDim\) and there do not exist any non-trivial solutions to \eqref{e:abn req} and thus, all the optimal state-action trajectories are normal.
\end{proof}

	\bibliography{references}
	\bibliographystyle{alpha}

\end{document}